\renewcommand\section{\@startsection{section}{1}{\z@}%
                       {-3\p@ \@plus -4\p@ \@minus -4\p@}%
                       {3\p@ \@plus 4\p@ \@minus 4\p@}%
                      {\normalfont\normalsize\centering\scshape}}
\author{Lashi Bandara}
\author{Sajjad Lakzian}
\author{Michael Munn}
\title{Geometric singularities 
and a flow tangent to the Ricci flow}
\date{\today}
\address{Lashi Bandara, Mathematical Sciences,
Chalmers University of Technology and University of Gothenburg, SE-412 96, Gothenburg, Sweden}
\urladdr{\href{http://www.math.chalmers.se/~lashitha}{http://www.math.chalmers.se/~lashitha}}
\email{\href{mailto:lashi.bandara@chalmers.se}{lashi.bandara@chalmers.se}}
\address{Sajjad Lakzian,
Hausdorff Center for Mathematics, 
D-53115, Bonn, Germany } 
\urladdr{\href{http://wt.iam.uni-bonn.de/sajjad-lakzian}{http://wt.iam.uni-bonn.de/sajjad-lakzian}}
\email{\href{mailto:lakzians@gmail.com}{lakzians@gmail.com}}
\address{Michael Munn,
Department of Mathematics, 
Courant Institute of Mathematical Sciences, New York, NY 10012-1185, USA}
\urladdr{\href{http://math.nyu.edu/~mmunn/}{http://math.nyu.edu/~mmunn}}
\email{\href{mailto:munn@nyu.edu}{munn@nyu.edu}}
\subjclass[2010]{53C44, 58J05, 35J15, 58J60}
\keywords{Ricci flow, rough metrics, Wasserstein space, geometric singularity, RCD space}
\def\colour{\colour}
\def\colour{\color}
\newtheorem{theorem}{Theorem}[section]
\newtheorem{corollary}[theorem]{Corollary}
\newtheorem{lemma}[theorem]{Lemma}
\newtheorem{proposition}[theorem]{Proposition}
\newtheorem{definition}[theorem]{Definition}
\newtheorem{remark}[theorem]{Remark}
\newcommand{\mdot}{\cdotp}
\newcommand{\cbrac}[1]{\left(#1\right)}
\newcommand{\bbrac}[1]{\left[#1\right]}
\newcommand{\dbrac}[1]{\left\{#1\right\}}
\newcommand{\modulus}[1]{|#1|}
\newcommand{\lmodulus}[1]{\left|#1\right|}
\newcommand{\set}[1]{\dbrac{#1}}
\newcommand{\dom}{ {\mathcal{D}}}
\newcommand{\ran}{ {\mathcal{R}}}
\newcommand{\nul}{ {\mathcal{N}}}
\newcommand{\comp}{\circ}
\newcommand{\e}{\mathrm{e}}
\newcommand{\R}{\mathbb{R}}
\newcommand{\C}{\mathbb{C}}
\newcommand{\script}[1]{\mathscr{#1}}
\renewcommand{\emptyset}{\varnothing}
\newcommand{\union}{\cup}
\newcommand{\intersect}{\cap}
\newcommand{\rest}[1]{{{\lvert_{}}_{}}_{#1}}
\newcommand{\close}[1]{\overline{#1}}		
\newcommand{\ind}[1]{\raisebox{\depth}{\(\chi\)}_{#1}}	
\renewcommand{\epsilon}{\varepsilon}
\renewcommand{\phi}{\varphi}
\newcommand{\tp}[1]{{#1}^{\mathrm{tr}}}
\newcommand{\tensor}{\otimes}
\newcommand{\norm}[1]{\| #1 \|}			
\DeclareMathOperator{\esssup}{esssup}
\DeclareMathOperator{\len}{\ell}			
\DeclareMathOperator{\divv}{div}		
\newcommand{\ddt}[1][t]{\frac{d}{d#1}}		
\newcommand{\Ric}{{\rm Ric}}			
\newcommand{\bnd}{\partial}			
\newcommand{\Forms}[1][{}]{\mathbf{\Omega}^{#1}}		
\newcommand{\Tensors}[1][{}]{{\mathcal{T}}^{(#1)}}	
\newcommand{\Sect}{\mathbf{\Gamma}}		
\newcommand{\tanb}{{\rm T}}		
\newcommand{\cotanb}{{\rm T}^\ast}	
\newcommand{\pullb}[1]{{#1}^\ast}			
\DeclareFontFamily{OT1}{restrictfont}{}
\DeclareFontShape{OT1}{restrictfont}{m}{n}{<-> fmvr8x}{}
\newcommand{\extd}{{\rm d}}			
\newcommand{\Dir}{{\rm D}}			
\newcommand{\inprod}[1]{\langle #1 \rangle}	
\newcommand{\grad}{\nabla}			
\newcommand{\conn}[1][{}]{{\grad_{{#1}}}}		
\DeclareMathOperator{\Lip}{\bf Lip}			
\newcommand{\Leb}[1][{}]{\script{L}^{#1}}			
\DeclareMathOperator{\diag}{diag}
\newcommand{\bddlf}{\mathcal{L}} 	
\newcommand{\spec}{\sigma}		
\newcommand{\rset}{\rho}				
\DeclareMathOperator{\nr}{nr}				
\newcommand{\Lp}[2][{}]{{\rm L}^{#2}_{\rm #1}}		
\newcommand{\Ck}[2][{}]{{\rm C}^{#2}_{\rm #1}}		
\newcommand{\Sob}[2][{}]{{\rm W}^{#2}_{\rm #1}}		
\newcommand{\ddelta}{\updelta}
\newcommand{\iden}{{\mathrm{I}}}
\newcommand{\Lap}{\Delta}			
\newcommand{\p}{p}				
\newcommand{\sP}{\script{P}} 
\newcommand{\cC}{\mathcal{C}}
\newcommand{\cV}{\mathcal{V}}
\newcommand{\cM}{\mathcal{M}} 
\newcommand{\cN}{\mathcal{N}}
\newcommand{\cS}{\mathcal{S}}
\newcommand{\Spa}{\mathcal{X}}
\newcommand{\mg}{\mathrm{g}}
\newcommand{\mgt}{{\tilde{\mg}}}
\newcommand{\mh}{\mathrm{h}}
\newcommand{\met}{\mathrm{d}} 
\newcommand{\Sph}{\mathrm{S}}
\newcommand{\Poincare}{Poincar\'e~}		
\newcommand{\Div}{\mathrm{L}}
\newcommand{\B}{\mathrm{B}}
\newcommand{\hk}{\uprho}
\DeclareMathOperator{\RCD}{RCD}
\DeclareMathOperator{\CD}{CD}
\DeclareMathOperator{\witch}{witch}
\newcommand{\RNum}[1]{\uppercase\expandafter{\romannumeral #1\relax}}
\begin{document}

\maketitle

\begin{abstract}
 We consider a geometric flow introduced by Gigli and Mantegazza
which, in the case of a smooth compact manifold
with a smooth metric,
is tangential to the Ricci flow
almost-everywhere along geodesics.
To study spaces with geometric
singularities, we consider this flow in the context 
of a smooth manifold with a rough metric possessing 
a sufficiently regular heat kernel. On an appropriate 
non-singular open region, we provide
a family of metric tensors evolving in time 
and provide a regularity theory
for this flow in terms of the regularity of
the heat kernel. 

 When the rough 
metric induces a metric measure space  satisfying  a Riemannian Curvature Dimension condition, 
we demonstrate that the distance induced
by the flow is identical to the
evolving distance metric 
defined by Gigli and Mantegazza
on appropriate admissible points.
Consequently, we demonstrate that 
a smooth compact manifold with a finite number of
geometric conical singularities remains
a smooth manifold with a smooth metric away 
from the cone points for all future times.
Moreover, we show that the distance induced by the
evolving metric tensor agrees with the flow of $\RCD(K,N)$ spaces defined by Gigli-Mantegazza.
\end{abstract}
\vspace*{-0.5em}
\tableofcontents
\vspace*{-2em}

\parindent0cm
\setlength{\parskip}{\baselineskip}

\section{Introduction}

Nearly ten years ago, using the tools of optimal transportation,  Lott-Villani \cite{LottVillani} and Sturm \cite{SturmI, SturmII} extended the notion of lower Ricci curvature bounds to the setting of general metric measure spaces. Among other things, they showed that this so-called curvature-dimension condition, denoted $\CD(K,N)$ for $K\in \mathbb{R}$, $N\in [1, \infty]$, is stable under measured Gromov-Hausdorff limits and consistent with the notion of Ricci curvature lower bounds for Riemannian manifolds.  That is to say, for smooth Riemannian manifolds, the $\CD(K,N)$ condition is equivalent to having the Ricci curvature tensor bounded below by $K$ and dimension of the manifold at most $N$. In a similar way, for a metric measure space $(\Spa, \met, \mu)$, the  $\CD(K,N)$ condition is understood to say that $\Spa$ has $N$-dimensional Ricci curvature bounded below by $K$. 

Although $\CD(K,N)$ spaces enjoy many favourable properties, Villani shows in \cite{Villani} that such spaces also allow for Finsler structures. This is a somewhat unsettling fact as it is known that Finsler manifolds cannot arise as the Gromov-Hausdorff limit of Riemannian manifolds with lower Ricci curvature bounds. Even more so, classical results such as the Cheeger-Gromoll splitting theorem were known to fail for general metric measure spaces which are merely $\CD(K,N)$. In order to retain these nice properties while also ruling out Finsler geometries, Ambrosio-Gigli-Savar\'{e} introduced a further refined version of the curvature-dimension bound requiring that in addition, the Sobolev space $\Sob{1,2}(\Spa)$ is a Hilbert space. Combining  this condition of \emph{infinitesimally Hilbertian} structure with the classical curvature dimension condition, they define the \emph{Riemannian Curvature Dimension} condition, denoted $\RCD(K,N)$.

In recent years there has been an increased interest in better understanding the fine geometric and analytic consequences of this Riemannian curvature dimension condition. There has been a  great deal of progress in this direction and a number of very deep results describing the structure of these spaces. 
See, for example, recent work of Ambrosio, Cavalletti, Gigli, Mondino, Naber, Rajala, Savar\'e, Sturm in \cite{AGMR, AGS-BE, Cav, GM, Gigli-split,GMR, MN}.
We emphasise that this list is by no means exhaustive, and encourage the reader to consult the references within.

The starting point of our
considerations is the 
paper \cite{GM} by 
Gigli and Mantegazza, 
where they define a geometric flow
for spaces that are possibly singular.
There, the authors 
consider a compact $\RCD(K,N)$ space
$(\Spa, \met, \mu)$
and define a family of evolving
distance metrics $\met_t$ for positive time.
They build this via the heat flow of $\met$ and $\mu$
in \emph{Wasserstein space}, 
the space of probability measures on $\Spa$
with the so-called Wasserstein metric.
The essential feature
of this flow is when the triple
$(\Spa, \met,\mu)$ arises
from a smooth compact manifold $(\cM,\mg)$.
In this setting, the evolution $\met_t$
is given by an evolving smooth metric tensor
which satisfies
$$ \partial_t \mg_t(\dot{\gamma}(s), \dot{\gamma}(s))\rest{t = 0} = -2 \Ric_{\mg}(\dot{\gamma}(s), \dot{\gamma}(s)),$$
for almost-every $s \in [0,1]$
along $\mg$-geodesics $\gamma$.
That is, $\mg_t$ is
\emph{tangential} to the Ricci flow 
in this weak sense.  This work of Gigli-Mantegazza gives one direction in 
which one could  possibly  define a Ricci flow for 
general metric measure spaces for all $t >0$.

The latter correspondence is
obtained by writing an evolving metric
tensor via a partial differential equation.
First, at each $t > 0$, $x \in \cM$
and $v \in \tanb_x \cM$, 
they consider 
the \emph{continuity equation}
\begin{equation}
\tag{CE}
\label{Def:E}
\begin{aligned}
&-\divv_\mg ( \hk^\mg_t(x,y) \conn \phi_{t,x,v}(y)) =( \extd_x\hk^\mg_t(x,y))(v) \\
&\int_{\cM} \phi_{t,x,v}(y)\ d\mu_\mg(y) = 0.
\end{aligned} 
\end{equation}
For smooth metrics, 
the existence and regularity of solutions
to this flow are immediate and 
they define a smooth family 
of metrics evolving in time by
\begin{equation} 
\tag{GM}
\label{Def:GM}
\mg_t(u,v)(x) = \int_{\cM} \mg(\conn \phi_{t,x,u}(y), 
	\conn \phi_{t,x,v}(y))\ \hk^\mg_t(x,y)\ d\mu_\mg(y).
\end{equation}

While the formulation of this flow for $\RCD(K,N)$ spaces gives the existence of a 
time evolving distance metric
for a possible singular space, 
it reveals little regularity information
in positive time.
On the other hand, the evolving metric tensor
is only specified when both the initial metric
and underlying manifold are smooth. 

One of the motivating questions
of the current paper is to better understand
the behaviour of this flow on manifolds
with geometric singularities and hence, the question of regularity
will be a primary focus.
Since there are few 
tools in the setting of $\RCD(K,N)$ spaces that are
sufficiently mature to extract regularity 
information, we restrict ourselves exclusively 
to compact manifolds that are \emph{smooth}, by which
we assume only that they admit a smooth differential structure.
While this may seem a potentially severe restriction,
we vindicate ourselves by allowing for
the metric tensor to be \emph{rough}, i.e.,
a symmetric, positive-definite, $(2,0)$-tensor field
with measurable coefficients.
Rough metrics and their salient features
are discussed in \S\ref{Sec:Rough}.

Such metrics allow a wide class 
of phenomena, so large that
such a metric may not even
induce a length structure, only 
an $n$-dimensional measure. 
Moreover, they may induce
spaces that are not $\RCD$.
However, this potentially outrageous
behaviour is redeemed 
by the fact that they are able to capture a
wide class of \emph{geometric singularities}, including 
Lipschitz transformations of $\Ck{1}$
geometries, conical singularities, and Euclidean 
boxes. These objects
are considered in \S\ref{Sec:Sing}.

Our primary concern is
when a metric exhibits singular 
behaviour on some closed subset $\cS \neq \cM$, 
but has good regularity properties on the
open set $\cM \setminus \cS$. This is indeed
the case for a Euclidean box, or a
smooth compact manifold with 
finite number of geometric conical singularities.
In this situation, away  from the singular part, we are able
to construct a metric tensor $\mg_t$  evolving according
to \eqref{Def:GM}.
We say that two points $x, y \in \cM \setminus \cS$
are $\mg_t$-admissible  if 
for any absolutely continuous curve $\gamma:I \to \cM$
connecting these points,  
there is another absolutely continuous curve $\gamma':I \to\cM$
between $x$ and $y$
with length (measured via $\met_t$)
less than $\gamma$
and for which $\gamma'(s) \in \cM \setminus \cS$
for almost-every $s$. For such a pair of points,
we assert that the distance $\met_t(x,y)$,
given by the $\RCD(K,N)$-flow
of Gigli and Mantegazza, is induced by the metric tensor
$\mg_t$.
The following is a more precise showcasing of our 
main theorem. It is proved in 
\S\ref{Sec:RCD}. 

\begin{theorem}
\label{Thm:Main}
Let $\cM$ be a smooth, compact manifold
with rough metric $\mg$ 
that induces a distance metric $\met_\mg$.
Moreover, suppose there exists $K \in \R$
and $N > 0$
such that $(\cM,\met_\mg, \mu_\mg) \in \RCD(K, N)$.
If $\cS \neq \cM$ is a closed set and 
$\mg \in \Ck{k}(\cM \setminus \cS)$, 
there exists a family of metrics
$\mg_t \in \Ck{k-1,1}$ on $\cM \setminus \cS$
evolving according to \eqref{Def:GM}
on $\cM \setminus \cS$. For two points $x, y \in \cM$
that are $\mg_t$-admissible,
the distance 
$\met_t(x,y)$ given by the $\RCD(K,N)$ Gigli-Mantegazza
 flow  is induced by $\mg_t$.
\end{theorem}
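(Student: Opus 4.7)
The plan is to break the result into three tasks carried out in order: (i) solve the continuity equation \eqref{Def:E} and define $\mg_t$ via \eqref{Def:GM} on the regular set $\cM \setminus \cS$; (ii) transfer the regularity of $\mg$ across \eqref{Def:E}--\eqref{Def:GM} to obtain $\mg_t \in \Ck{k-1,1}$; and (iii) identify the Gigli--Mantegazza distance $\met_t$ with the length distance of $\mg_t$ on admissible pairs.

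For tasks (i) and (ii), the $\RCD(K,N)$ hypothesis produces a global heat kernel $\hk^\mg_t$ associated to the Cheeger--Dirichlet form on $(\cM, \met_\mg, \mu_\mg)$, so \eqref{Def:E} makes sense distributionally on $\cM$. The right-hand side has vanishing mean in $y$ (differentiate $\int \hk^\mg_t(x,y)\, d\mu_\mg(y) = 1$), so standard variational theory produces a unique zero-mean weak solution $\phi_{t,x,v}$. To upgrade regularity I would localise to $\cM \setminus \cS$: there $\mg \in \Ck{k}$, and parabolic interior estimates applied to the heat equation promote $\hk^\mg_t$ to $\Ck{k}$ in $y$ on each compact subset. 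Consequently \eqref{Def:E} becomes a uniformly elliptic divergence-form equation with positive coefficient, and Schauder theory then yields local $\Ck{k,\alpha}$ regularity of $\phi_{t,x,v}$ in $y$. A parallel analysis in the $x$ variable, combined with joint smoothness of $(x,y) \mapsto \hk^\mg_t(x,y)$ on the product of regular sets, lets me insert $\conn \phi_{t,x,u}$ and $\conn \phi_{t,x,v}$ into \eqref{Def:GM} to obtain $\mg_t \in \Ck{k-1,1}$; the derivative loss is precisely the gradient in \eqref{Def:GM}. Positive definiteness of $\mg_t(v,v)$ I would verify by testing \eqref{Def:E} against $\phi_{t,x,v}$ itself and applying Cauchy--Schwarz against the positive weight $\hk^\mg_t$.

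For task (iii), let $x,y$ be $\mg_t$-admissible. On $\cM \setminus \cS$ the tensor $\mg_t$ constructed above is a genuine smooth Riemannian metric, and the local construction of \cite{GM} for the fully smooth case applies verbatim inside this open set to show that the $\met_t$-length of any absolutely continuous curve $\gamma$ contained in $\cM \setminus \cS$ equals $\int_0^1 \sqrt{\mg_t(\dot\gamma, \dot\gamma)}\, ds$. Admissibility then lets me replace any competitor curve between $x$ and $y$ by one living almost everywhere in $\cM \setminus \cS$ without raising its $\met_t$-length, so taking the infimum over such curves produces both $\met_t(x,y)$ and the $\mg_t$-length distance simultaneously.

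The main obstacle I anticipate is task (iii): translating the intrinsically Wasserstein-defined distance $\met_t$ into a local length statement on regular patches. The Gigli--Mantegazza construction a priori interacts only with the metric-measure data, so to match it against \eqref{Def:GM} requires a localisation principle saying that, on any open set where \eqref{Def:E} admits smooth solutions, $\met_t$ coincides with the length distance of $\mg_t$; the admissibility hypothesis is exactly what is needed to patch this local identification into a global statement about $\met_t(x,y)$. A secondary but essential issue is showing that the heat kernel of a rough metric inherits smoothness on any open subset where the metric itself is smooth, which underpins the regularity portion of task (ii).
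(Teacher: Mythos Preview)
Your three-step plan matches the paper's architecture, but two load-bearing steps are not handled correctly.

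\textbf{Global Lipschitz of the heat kernel.} You invoke the $\RCD(K,N)$ hypothesis only to guarantee existence of $\hk^\mg_t$, then immediately localise to $\cM\setminus\cS$ and run interior Schauder estimates in $y$. This misses the point of the $\RCD$ assumption: it gives $\hk^\mg_t\in\Ck{0,1}(\cM^2)$ \emph{globally} (via \cite{AGMR}), and this global Lipschitz control is what drives the $x$-regularity of $\mg_t$. The continuity equation \eqref{Def:E} is an elliptic equation in $y$ on \emph{all} of $\cM$, and $\mg_t(v,v)(x)=\inprod{\eta_{x,v},\phi_{t,x,v}}$ is an $\Lp{2}(\cM)$-pairing. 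Local $y$-regularity of $\phi_{t,x,v}$ on the regular set tells you nothing about how this global pairing varies in $x$; the paper instead studies the operator family $x\mapsto \Dir_x=-\divv_\mg\hk^\mg_t(x,\cdot)\conn$, and the key fact $\dom(\Dir_x)=\dom(\Lap_\mg)$ (constant in $x$) together with the weak differentiability of $x\mapsto\Dir_x$ both require $y\mapsto\hk^\mg_t(x,y)$ to be Lipschitz on all of $\cM$, not just on $\cM\setminus\cS$. Your ``parallel analysis in the $x$ variable'' is where the real work lies, and it cannot be carried out with only local Schauder input. Incidentally, the derivative count is: $\mg\in\Ck{k}$ on the regular set gives $\hk^\mg_t\in\Ck{k+1}$ there (one derivative gained), and then the operator-theoretic regularity machine, limited by the merely Lipschitz global kernel, returns $\mg_t\in\Ck{k-1,1}$; it is not ``the gradient in \eqref{Def:GM}'' that costs the derivative.

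\textbf{Identification with $\met_t$.} You propose that the smooth-case argument of \cite{GM} ``applies verbatim'' on $\cM\setminus\cS$. It does not: their uniqueness theorem for the continuity equation (their Theorem~2.5) goes through Nash embedding and Euclidean DiPerna--Lions theory, which is unavailable here because the metric $\mg$ is rough on all of $\cM$ and the equation \eqref{Def:E} is posed on all of $\cM$, not just on the regular set. The paper instead invokes Proposition~4.5 of Gigli--Han \cite{Gigli-Han}, which identifies $\modulus{\dot\nu_s}^2$ with $\norm{\conn\phi_{t,\gamma_s,\dot\gamma_s}}^2_{\Lp{2}(\nu_s)}$ directly in the $\RCD$ framework, using that $\Sob{1,2}(\cM)$ is Hilbert. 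This is the second essential use of the $\RCD$ hypothesis, and your proposal does not account for it.
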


The \emph{divergence form} structure is  an essential  
feature that allows for the analysis of the defining
continuity equation \eqref{Def:E}.
In the compact case, 
it turns out that near every rough metric $\mg$, 
there is a smooth metric $\mgt$ in
a suitable $\Lp{\infty}$-sense.
Coupling this with the divergence
structure, we are able to \emph{perturb}
this problem to the study of a
divergence form operator with  bounded, measurable
coefficients on $\mgt$ of the form 
$$ -\divv_{\mgt} \hk^\mg_t(x,\mdot) \B \theta  \conn \phi_{t,x,v} = \theta \extd_x(\hk^\mg_t(x,\mdot))(v),$$ 
where $\B$ is a bounded, measurable $(1,1)$-tensor
transforming $\mgt$ to $\mg$ and
$\theta$ the Raydon-Nikodym derivative
of the two induced measures $\mu_\mgt$ and $\mu_\mg$.

This trick of ``hiding''
singularities in an operator has 
its origins to the investigations of
boundary value problems 
with low regularity boundary. 
For us, this philosophy
has a geometric reincarnation  arising from 
investigations of the Kato square
root problem on manifolds, with its origins
in a seminal paper  \cite{AKMc} by Axelsson, Keith and
McIntosh and more recently by 
Morris in \cite{Morris}, and Bandara and McIntosh in \cite{BMc}. 

In \S\ref{Sec:ER}, we study the existence 
and regularity of solutions to such continuity 
equations by using
spectral methods and PDE tools. We further emphasise that such 
equations allow for a certain \emph{disintegration} - 
that is, at each point $x$, we solve
a differential equation in the $y$ variable.
Eventually, we are concerned with objects
involving an integration in $y$, and hence, we are
able to allow weak solutions in $y$ while
being able to prove stronger regularity results
in $x$. 

While the reduction of a nonlinear problem 
to a pointwise linear one is a tremendous
boon to the analysis that we conduct
in this paper, there is a price to pay. 
The equation \eqref{Def:E} is nonlinear in $x$, and this
nonlinear behaviour requires
the analysis in $x$ of the family
of operators
$$x \mapsto \divv_{\mgt} \hk^\mg_t(x,\mdot)\B\theta \conn.$$
This is not as significant a disadvantage as one initially anticipates
as this opens up the possibility
to attacking regularity questions
by the means of operator theory.

One of the main points of this paper
is to illustrate how the regularity
properties of the flow \eqref{Def:GM}
relate to the regularity properties of the heat kernel. 
Theorem \ref{Thm:Main}
allows for the possibility of the evolving metric
to become \emph{less} regular than the original 
metric on the non-singular subset.
An inspection of the continuity equation
\eqref{Def:E} shows that the solution at a point $x$ 
depends on sets of full measure
potentially far away from this point.
Thus, it is possible that
singularities may resolve from 
smoothing properties emerging from the heat kernel. 
However, it is also possible
that potentially unruly behaviour
somewhere in heat kernel forces the flow to 
introduce additional singularities.
That being said, we show that 
for $k \geq 1$, 
$\Ck{k}$ metrics will continue to be 
$\Ck{k}$ under the flow. We discuss these results and surrounding
issues in greater depth in \S\ref{Sec:Res}.
\section*{Acknowledgements}

This research was conducted during the
``Junior Trimester Program on Optimal Transport'' at
the Hausdorff Research Institute for Mathematics
in Bonn, Germany. The authors acknowledge the gracious
support of this institution as well as the program.

The authors would like to thank Nicola Gigli for encouraging
conversation as well as 
Christian Ketterer, Martin Kell and Alex Amenta for helpful 
conversations and suggestions.
\section{Geometric singularities}

Throughout this paper, by 
the term \emph{geometric singularity},
we shall mean singularities that
arise in the metric $\mg$ of
a \emph{smooth manifold} $\cM$.
We allow such singularities 
to be a lack of differentiability
or even the lack of continuity.
To emphasise this point, we contrast this to 
pure topological singularities, 
which are singularities that
live in the topology and cannot be 
smoothed and transferred into 
the metric. 

Let $\cM$ to be a smooth manifold
(possibly non-compact) of dimension $\dim \cM = n$.
By this, we mean a second countable, Hausdorff
space that is locally Euclidean, with the
transition maps being smooth.

On an open subset $\Omega \subset \cM$, 
we write $\Ck{k,\alpha}(\Omega)$
($k \geq 0$ and 
$\alpha \in [0,1]$) to mean
$k$-times continuously differentiable functions bounded
\emph{locally} in coordinate patches inside $\Omega$, and where
the $k$-th partial derivatives are $\alpha$-H\"older
continuous \emph{locally}. 
We write $\Ck{k}(\Omega)$ instead
of $\Ck{k,0}(\Omega)$.  

Let $\Tensors[p,q]\cM$ denote the tensors
of covariant rank $p$ and contravariant rank $q$.
We write $\cotanb\cM = \Tensors[1,0]\cM$
and $\tanb\cM = \Tensors[0,1]\cM$,
the \emph{cotangent} and \emph{tangent} bundles
respectively. The bundle of differentiable 
$k$-forms are given by $\Forms[k]\cM$
and the exterior algebra
is given by $\Forms\cM = \oplus_{k=0}^n \Forms[k]\cM$,
where $\cM\times \R = \Forms[0]\cM$ 
(the bundle of functions), and $\cotanb\cM = \Forms[1]\cM$.

The differentiable
structure of the smooth manifold 
affords us with a differential operator
$\extd: \Ck{\infty}(\Forms[k]\cM) \to \Ck{\infty}(\Forms[k+1]\cM)$.
Indeed, this differential operator is dependent
on the differentiable structure we associate
to the manifold. We remark on this fact 
since, in dimensions higher than $4$, there
are homeomorphic differentiable structures
that are not diffeomorphic (cf. \cite{Milnor}
by Milnor and \cite{Freedman} by Freedman).
From this point   onward , we fix a differentiable
structure on $\cM$.
We shall only exercise interest in the 
case of $k = 0$ 
where $\extd: \Ck{\infty}(\cM) \to \Ck{\infty}(\cotanb\cM)$
and sometimes use the notation $\conn$ to denote
$\extd$.

We emphasise that a smooth manifold
also affords us with a measure structure.
We say that a set $A \subset \cM$
is measurable if for any 
chart $(\psi, U)$ with 
$U \intersect A \neq \emptyset$, we obtain 
that $\psi(U \intersect A) \subset \R^n$
is Lebesgue measurable. By second countability, 
this quantification can be made countable.
By writing $\Sect(\Tensors[p,q]\cM)$
we denote the sections of the
vector bundle $\Tensors[p,q]\cM$ with 
\emph{measurable} coefficients.

\subsection{Rough metrics}
\label{Sec:Rough}
In connection with investigating the
geometric invariances of the Kato square root problem,
Bandara introduced a notion of a \emph{rough
metric} in \cite{BRough}. This notion is of
fundamental importance to the rest of this paper
and therefore, in this sub-section,
we will describe some of the important features of such metrics.
We do not assume that $\cM$ is compact until later
in this section.
Let us first recall the definition of a rough metric.

\begin{definition} 
We say that a real-symmetric $\mg \in \Sect(\Tensors[2,0]\cM)$
is a \emph{rough metric} if 
for each $x \in \cM$,
there exists some chart  $(\psi,U)$ containing $x$
and a constant $C \geq 1$ (dependent on $U$),
such that, for $y$-a.e. in $U$,
$$C^{-1} \modulus{u}_{\pullb{\psi}\delta(y)} \leq
	\modulus{u}_{\mg(y)} \leq C \modulus{u}_{\pullb{\psi}\delta(y)},$$
 where $u \in \tanb_y\cM$, $\modulus{u}_{\mg(y)}^2 = \mg(u,u)$
and $\pullb{\psi}\delta$ is the pullback
of the Euclidean metric inside $\psi(U) \subset \R^n$. 
Such a chart is said to satisfy the 
\emph{local comparability condition}.
\end{definition}

It is easy to see that by taking $U$ to be 
the pullback of a Euclidean 
ball contained in a chart near $x$, every 
$\Ck{k,\alpha}$  metric (for $k \geq 0$ and 
$\alpha \in [0,1]$)  is a rough metric.

 Two rough metrics 
$\mg$ and $\mgt$ are said to be $C$-close (for $C \geq 1$)
if  
$$ C^{-1} \modulus{u}_{\mg(x)} \leq \modulus{u}_{\mgt(x)} \leq C \modulus{u}_{\mg(x)}$$
for almost-every $x \in \cM$.
If we assume that $\cM$ is compact, 
then it is easy to see that for any 
rough metric $\mg$, there exists a constant
$C \geq 1$ and a  smooth metric
$\mgt$ such that $\mg$ and $\mgt$ are $C$-close.
Two continuous metrics are $C$-close
if the $C$-close  condition 
above holds everywhere. 
Moreover, we note the following. 
Its proof is given in Proposition 10 in \cite{BRough}.
\begin{proposition}
\label{Prop:OpExist}
Let $\mg$ and $\mgt$ be two rough metrics that are 
$C$-close. Then, there exists $\B \in \Sect(\cotanb\cM \tensor \tanb\cM)$
such that it is real, symmetric,  almost-everywhere positive, invertible, and
$$\mgt_x(\B(x)u,v) = \mg_x(u,v)$$
for almost-every $x  \in \cM$. Furthermore, 
for almost-every $x \in \cM$, 
$$C^{-2} \modulus{u}_{\mgt(x)} \leq \modulus{\B(x)u}_{\mgt(x)} \leq C^2 \modulus{u}_{\mgt(x)},$$
and the same inequality  holds  with $\mgt$ and $\mg$ interchanged.
If $\mgt \in \Ck{k}$ and $\mg \in \Ck{l}$ (with $k, l \geq 0$),
then the properties of $\B$ are valid for all $x \in \cM$ and
$\B \in \Ck{\min\set{k,l}}(\cotanb\cM \tensor \tanb\cM).$
\end{proposition}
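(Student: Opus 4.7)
The plan is to construct $\B$ pointwise by purely linear-algebraic means on each fibre and then verify that measurability (respectively the claimed regularity) of the coefficients of $\mg$ and $\mgt$ is inherited by $\B$. At each $x \in \cM$, both $\mg(x)$ and $\mgt(x)$ are symmetric positive-definite bilinear forms on $\tanb_x\cM$. Writing $\flat_\mg, \flat_\mgt : \tanb_x\cM \to \cotanb_x\cM$ for the musical isomorphisms induced by $\mg$ and $\mgt$, I would simply set
$$\B(x) := \flat_\mgt^{-1} \circ \flat_\mg,$$
viewed as an element of $\cotanb_x\cM \tensor \tanb_x\cM$. The defining identity $\mgt_x(\B(x)u,v) = \mg_x(u,v)$ then follows by unwinding definitions. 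Symmetry of $\B(x)$ with respect to $\mgt_x$ reduces to the symmetry of $\mg_x$, positivity is immediate because $\mgt_x(\B(x)u,u) = \mg_x(u,u) > 0$ for $u \neq 0$, and invertibility is automatic since $\mg_x$ and $\mgt_x$ are both non-degenerate.

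Next I would deduce the norm estimates by spectral decomposition. Because $\B(x)$ is $\mgt_x$-symmetric and $\mgt_x$-positive, choose an $\mgt_x$-orthonormal basis $\set{e_i}$ of eigenvectors with eigenvalues $\lambda_i > 0$. Evaluating the defining identity on such an $e_i$ gives $\lambda_i = \mg_x(e_i,e_i) = \modulus{e_i}_{\mg}^2$, and the $C$-closeness of $\mg$ and $\mgt$ applied at $e_i$ yields $C^{-2} \leq \lambda_i \leq C^2$. Expanding a general $u = \sum_i c_i e_i$ in this basis, $\modulus{\B(x)u}_\mgt^2 = \sum_i c_i^2 \lambda_i^2$, so the eigenvalue bounds produce $C^{-2}\modulus{u}_\mgt \leq \modulus{\B(x)u}_\mgt \leq C^2 \modulus{u}_\mgt$. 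The interchange inequality follows from the same argument with the roles of $\mg$ and $\mgt$ swapped, since $\B(x)^{-1} = \flat_\mg^{-1}\circ\flat_\mgt$ is precisely the analogous tensor for the pair $(\mgt,\mg)$.

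The final step is regularity of the coefficients. In any chart on which $\mg$ and $\mgt$ have local coefficient matrices $[\mg_{ij}]$ and $[\mgt_{ij}]$, the coefficient matrix of $\B$ is $[\mgt_{ij}]^{-1}[\mg_{ij}]$. Since both metrics are positive-definite, $\det[\mgt_{ij}]$ is pointwise positive and, by the local comparability and $C$-closeness conditions, locally bounded away from zero, so Cramer's rule expresses the entries of $\B$ as rational functions of the entries of $\mg$ and $\mgt$ with non-vanishing denominator. Measurability of $\mg$ and $\mgt$ then passes to $\B$, and if $\mg \in \Ck{l}$ and $\mgt \in \Ck{k}$, the entries of $\B$ are $\Ck{\min\set{k,l}}$, in which case the pointwise identities and inequalities hold at every $x \in \cM$.

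I do not anticipate a serious obstacle. The only cosmetic items worth double-checking are that the chartwise Cramer-rule construction is chart-independent — which is automatic since $\B$ is defined intrinsically through the musical isomorphisms — and that the correct exponent in the norm estimate is $C^2$ rather than $C$, which is forced by the fact that $\mgt(\B u,u)=\mg(u,u)$ is a quadratic identity, so the $C$-closeness of $\mg$ and $\mgt$ (which is linear in the norm) enters the eigenvalues of $\B$ at the squared scale.
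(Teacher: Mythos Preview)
Your argument is correct and is precisely the natural linear-algebraic construction one expects: define $\B(x)$ fibrewise via the musical isomorphisms, read off symmetry and positivity from the defining identity, extract the eigenvalue bounds from $C$-closeness, and pass to local coordinates for the regularity claim. The paper itself does not supply a proof but simply cites Proposition~10 of \cite{BRough}; your argument is in all likelihood the same as the one found there.
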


A rough metric always induces a measure described
by the expression 
$$ d\mu_\mg(x) = \sqrt{\det (\mg_{ij}(x))}\ d\Leb(x)$$
inside a locally comparable chart. The well-definedness
of this expression is verified just as 
in the case of a $\Ck{k,\alpha}$ metric.
This measure can easily be proven to be Borel
and finite on compact sets. 
The notion of measurable which we have
defined agrees with the notion of $\mu_\mg$-measurable
obtained via a rough metric.
Moreover, the following holds for two $C$-close
metrics.
\begin{proposition}
\label{Prop:MeasRep}
Let $\mg$ and $\mgt$ be $C$-close for some $C \geq 1$.
Then, the measure
$d\mu_\mg(x) = \sqrt{\det \B(x)}\ d\mu_\mgt(x)$
for $x$-a.e., and
$C^{-\frac{n}{2}} \mu_\mg \leq \mu_\mgt \leq C^{\frac{n}{2}} \mu_\mg.$
 Moreover, if $\mgt$ is continuous, then the measure $\mu_\mg$ is Radon. 
\end{proposition}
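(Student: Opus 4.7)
The plan is to work in local coordinate charts satisfying the local comparability condition for both metrics, derive the density formula pointwise, and then globalise.

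Fix a chart $(\psi, U)$ in which both $\mg$ and $\mgt$ are locally comparable. By Proposition \ref{Prop:OpExist}, the tensor $\B \in \Sect(\cotanb\cM \tensor \tanb\cM)$ satisfies $\mgt_x(\B(x) u, v) = \mg_x(u,v)$ almost-everywhere; expressed as matrices in the chart this reads $\mg_{ij}(x) = \mgt_{ik}(x) \B^k_j(x)$. Taking determinants yields $\det[\mg_{ij}(x)] = \det[\mgt_{ij}(x)] \cdot \det \B(x)$ a.e. Since both Gram matrices are positive-definite a.e. and $\B$ inherits positivity from its defining relation, all three determinants are positive a.e. and we may take square roots. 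Combined with the coordinate expressions $d\mu_\mg = \sqrt{\det[\mg_{ij}]}\, d\Leb$ and $d\mu_\mgt = \sqrt{\det[\mgt_{ij}]}\, d\Leb$, this gives the density formula $d\mu_\mg = \sqrt{\det \B}\, d\mu_\mgt$ on $U$. By second countability $\cM$ admits a countable cover by such charts, and since the pointwise identity is coordinate-invariant, gluing produces the global relation.

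For the two-sided measure comparison, squaring the $C$-close condition gives $C^{-2} \mg(u,u) \leq \mgt(u,u) \leq C^2 \mg(u,u)$ pointwise a.e. Together with $\mgt(\B u, u) = \mg(u,u)$, this pins the eigenvalues of the $\mgt$-symmetric operator $\B$ (equivalently, of $\mgt^{-1}\mg$) to a bounded interval depending only on $C$ a.e., so that $\det \B$ and hence $\sqrt{\det \B}$ is bounded above and below by positive constants depending only on $C$ and $n$. The claimed two-sided inequality of measures then follows by integrating the density formula against an arbitrary measurable set.

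Finally, for the Radon property when $\mgt$ is continuous: locally on a precompact chart, $\sqrt{\det[\mgt_{ij}]}$ is continuous and positive, so $\mu_\mgt$ is finite on compact sets and inherits inner and outer regularity from Lebesgue measure, yielding Radonness by the standard argument on second-countable locally compact Hausdorff spaces. Since $d\mu_\mg = \sqrt{\det \B}\, d\mu_\mgt$ with density bounded above and below by positive constants, $\mu_\mg$ inherits local finiteness and regularity, and so is Radon as well. I expect the main technical point—rather than a deep obstacle—to be the careful verification that the a.e.-defined coordinate identity $\det \mg = \det \mgt \cdot \det \B$ yields a bona fide Radon-Nikodym derivative globally, which is handled by noting that $\mu_\mg$ and $\mu_\mgt$ share the same null sets thanks to the two-sided comparison, and that the local pointwise density is well-defined modulo $\mu_\mgt$-null sets independently of the chart.
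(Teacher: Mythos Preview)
Your proof is correct and follows essentially the same route as the paper: the density formula and measure comparison (which the paper outsources to \cite{BRough}) come from the coordinate identity $\det \mg = \det \mgt \cdot \det \B$ together with eigenvalue bounds on $\B$, and Radonness is transferred from $\mu_\mgt$ to $\mu_\mg$ via the two-sided density bound---the paper does this by an explicit inner-regularity estimate, you by invoking the general fact that a bounded positive density preserves Radonness, which amounts to the same thing. One small point to tighten: under the paper's norm-based definition of $C$-closeness your Rayleigh-quotient argument pins the eigenvalues of $\B$ to $[C^{-2},C^{2}]$, giving $\sqrt{\det \B}\in[C^{-n},C^{n}]$ rather than the stated $[C^{-n/2},C^{n/2}]$; the sharper exponent corresponds to $C$-closeness on squared norms, so make explicit which convention you are using before asserting that ``the claimed two-sided inequality follows.''
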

\begin{proof}
The first part of the statement is proved as  Proposition 11 in \cite{BRough}.
We prove that $\mu_\mg$ is Radon by using the fact
that $\mu_\mgt$ is inner regular.
That is, for a Borel
$B \subset \cM$ and every $\epsilon > 0$
there exists $K_\epsilon \subset B$ such that
$\mu_\mgt(B) - \mu_\mgt(K_\epsilon) \leq \epsilon.$
Therefore,
\begin{multline*} 
\mu_\mg(B) - \mu_\mg(K_\epsilon)
	= \int (\ind{B} - \ind{K_\epsilon})\ d\mu_\mg
	\leq C^{n/2} \int (\ind{B} - \ind{K_\epsilon})\ d\mu_\mgt \\
	\leq C^{n/2} (\mu_\mgt(B) -  \mu_\mgt(K_\epsilon))
	\leq C^{n/2} \epsilon, 
\end{multline*}
where the first inequality follows from the fact
that $\ind{B} - \ind{K_\epsilon} \geq 0$. 
Thus, $\mu_\mg(B) = \sup_{K \Subset B} \mu_\mg(K).$
\end{proof}

We remark that throughout the paper, when we say that
$(\cM,\mg)$ \emph{induces a length structure}, 
we mean that between any two points $x, y \in \cM$
there exists an absolutely continuous curve  $\gamma:I \to \cM$ with 
$\gamma(0)= x$, $\gamma(1) = y$ such that 
$$ 0 < \int_{I} \modulus{\dot{\gamma}(t)}^2_{\mg(\gamma(t))} < \infty.$$
Then, the induced distance $\met_\mg(x,y)$ is simply given
as in the smooth case by taking an infimum
over all curves between such points of the square root of this quantity.  

\subsection{$\Lp{\infty}$-metrics and metrics of divergence form operators}

The goal of this subsection is to 
illustrate the connections
of rough metrics to other low-regularity metrics
that are often mentioned in folklore.
In fact, we shall see that as a virtue of
compactness, these notions are indeed equivalent.
This section is intended as motivation for us
considering rough metrics and can be safely omitted.

First, we highlight the following simple lemma. 
\begin{lemma}
\label{Lem:RChar}
Suppose that $\mg \in \Sect(\Tensors[2,0]\cM)$
is symmetric and that there exists a
smooth metric $\mh$ and $C \geq 1$ such that
$$ C^{-1} \modulus{u}_{\mh(x)} \leq 
	\modulus{u}_{\mg(x)} \leq C \modulus{u}_{\mh(x)}$$
for almost-every $x \in \cM$. Then $\mg$ is a rough metric.
\end{lemma}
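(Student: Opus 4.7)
The plan is to verify the local comparability condition directly by transitivity: compare $\mh$ to the Euclidean pullback on a small chart, then use the hypothesis to transport that comparison to $\mg$.

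First I would fix an arbitrary $x \in \cM$ and choose any chart $(\psi, V)$ containing $x$. Since $\mh$ is smooth, the coefficient matrix $(\mh_{ij})$ is continuous on $\psi(V)$, and its eigenvalues are continuous positive functions of the base point. Restricting to a precompact coordinate ball $U \Subset V$ with $x \in U$, I can extract uniform positive upper and lower bounds on these eigenvalues on $\overline{U}$. This produces a constant $C' \geq 1$, depending on $U$, such that
\begin{equation*}
(C')^{-1} \modulus{u}_{\pullb{\psi}\delta(y)} \leq \modulus{u}_{\mh(y)} \leq C' \modulus{u}_{\pullb{\psi}\delta(y)}
\end{equation*}
for every $y \in U$ and $u \in \tanb_y\cM$; note the inequalities hold pointwise, not merely almost-everywhere, since $\mh$ is continuous.

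Second, I would combine this with the hypothesis, which furnishes the analogous comparison between $\mh$ and $\mg$ for almost-every $y$. Multiplying the constants gives, for $y$-a.e.\ in $U$ and every $u \in \tanb_y \cM$,
\begin{equation*}
(CC')^{-1} \modulus{u}_{\pullb{\psi}\delta(y)} \leq \modulus{u}_{\mg(y)} \leq CC' \modulus{u}_{\pullb{\psi}\delta(y)}.
\end{equation*}
This is exactly the local comparability condition on the chart $(\psi, U)$ with constant $CC'$. Since $\mg$ is symmetric by assumption and the lower bound guarantees almost-everywhere positive definiteness, $\mg$ qualifies as a rough metric.

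There is no real obstacle here; the only point that requires care is selecting $U$ small enough to ensure the smooth-versus-Euclidean comparison is uniform, which is automatic from continuity of $\mh$ together with precompactness. The argument is essentially a transitivity statement for $C$-closeness, and it shows that the hypothesis in the lemma is an equivalent reformulation of the rough metric condition in the presence of a globally smooth reference metric.
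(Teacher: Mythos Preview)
Your proof is correct and follows essentially the same approach as the paper: both arguments fix a chart, use smoothness of $\mh$ together with compactness of a precompact coordinate ball to obtain a uniform comparison between $\mh$ and the Euclidean pullback, and then combine this with the hypothesis by transitivity to obtain the local comparability condition for $\mg$. The only differences are cosmetic (the paper swaps the roles of the letters $U$ and $V$ and does not spell out the eigenvalue argument explicitly).
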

\begin{proof}
Fix $x \in \cM$ and let $(\psi, U)$ be
a chart near $x \in \cM$. Let $V = \psi^{-1}(B_{r}(\psi(x)))$, 
where $B_r(\psi(x)) \subset \R^n$ is a Euclidean 
ball with $r > 0$ chosen such that $\close{B_r(\psi(x))} \subset \psi(U)$.
Then, by virtue of the smoothness of $\mh$ and since $\close{V}$
is compact, we obtain some $C_{V} \geq 1$ such that
$$ C_{V}^{-1} \modulus{u}_{\pullb{\psi}\delta(y)} 
	\leq \modulus{u}_{\mh(y)} \leq C_V \modulus{u}_{\pullb{\psi}\delta(y)},$$
for all $y \in V$.
On combining this with our hypothesis, we find that
for almost-every $y \in V$,
 $$ (C_{V}C)^{-1} \modulus{u}_{\mh(y)} \leq 
	\modulus{u}_{\mg(y)} \leq C_VC \modulus{u}_{\mh(y)}.$$
That is, $\mg$ is a rough metric.
\end{proof}

Next, we define the following notion of
an $\Lp{\infty}$-metric.
\begin{definition}
We say that a real-symmetric $\mg \in \Sect(\Tensors[2,0]\cM)$ 
is an $\Lp{\infty}$-metric on $\cM$ if:
\begin{enumerate}[(i)]
\item there exists a $\mg^{-1} \in \Sect(\Tensors[0,2]\cM)$
	inverse to $\mg$, by which we mean that
	writing $G = (\mg^{ij}(x))$ and $G' = (\mg^{-1}_{ij}(x))$,
	$GG' = G'G = \iden$ for almost-every $x \in \cM$, and
\item there exists a smooth metric $\mh$ and constants
	$\Lambda_1, \Lambda_2 > 0$ such that
	$\modulus{\mg}_{\mh} \leq \Lambda_1$ 
	and $\modulus{\mg^{-1}}_{\mh} \leq \Lambda_2$.
\end{enumerate} 
\end{definition}

We prove that an $\Lp{\infty}$-metric is indeed
a rough metric.
 
\begin{proposition}
\label{Prop:LpinfR}
An $\Lp{\infty}$-metric $\mg$ is a rough metric.
It is $(\max\set{\Lambda_1 n, \Lambda_2 n})$-close to a 
smooth metric $\mh$.
\end{proposition}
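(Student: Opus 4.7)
The plan is to invoke Lemma \ref{Lem:RChar}, which reduces the problem to showing that the $\Lp{\infty}$-metric $\mg$ is comparable to \emph{some} smooth metric in the required pointwise a.e.\ sense. The hypotheses already hand us such a candidate, namely the smooth metric $\mh$ together with the bounds $\modulus{\mg}_{\mh} \leq \Lambda_1$ and $\modulus{\mg^{-1}}_{\mh} \leq \Lambda_2$. So it suffices to upgrade these tensor-norm bounds to the two-sided estimate
$$C^{-1} \modulus{u}_{\mh(x)} \leq \modulus{u}_{\mg(x)} \leq C \modulus{u}_{\mh(x)}$$
for almost every $x \in \cM$ and every $u \in \tanb_x\cM$, with $C = \max\set{\Lambda_1 n, \Lambda_2 n}$, and then feed the result into Lemma \ref{Lem:RChar}.

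I would carry this out by a pointwise linear algebra calculation. At almost every $x$ where both bounds hold, I choose an $\mh(x)$-orthonormal basis of $\tanb_x\cM$. In such a basis, $\mh(x)$ is the identity, $\mg(x)$ is represented by a symmetric positive-definite matrix $G$, and $\mg^{-1}(x)$ by $G^{-1}$; moreover, $\modulus{\mg}_{\mh}(x)$ and $\modulus{\mg^{-1}}_{\mh}(x)$ reduce to Frobenius-type norms of $G$ and $G^{-1}$ respectively. A standard elementary estimate (for instance, bounding the spectral radius by the trace, or by $n$ times the maximal entry) then gives $\lambda_{\max}(G) \leq n \Lambda_1$ and $\lambda_{\max}(G^{-1}) \leq n \Lambda_2$, the latter yielding $\lambda_{\min}(G) \geq (n \Lambda_2)^{-1}$. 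Evaluating the quadratic forms on a vector $u$ and taking square roots produces exactly the desired two-sided bound, with constant no worse than $\max\set{\Lambda_1 n, \Lambda_2 n}$.

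With this closeness in hand, Lemma \ref{Lem:RChar} immediately delivers that $\mg$ is a rough metric, and the same constant serves as the $C$-closeness constant to $\mh$. I do not anticipate a real obstacle: the only genuinely analytic ingredient, namely comparing the smooth metric $\mh$ to the local Euclidean metric inside charts via compactness of closed balls, is already packaged inside Lemma \ref{Lem:RChar}, while everything new here is pointwise linear algebra. The mildest subtlety is that the natural constant one first extracts is closer to $\sqrt{n\max\set{\Lambda_1,\Lambda_2}}$, and one then absorbs a square root to land on the slightly loose but cleanly stated $\max\set{\Lambda_1 n, \Lambda_2 n}$ of the proposition.
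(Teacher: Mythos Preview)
Your proposal is correct and follows essentially the same route as the paper: work pointwise in an $\mh$-orthonormal frame, convert the tensor-norm bounds on $\mg$ and $\mg^{-1}$ into eigenvalue bounds on the representing matrix $G$, deduce the two-sided quadratic-form estimate, and finish with Lemma~\ref{Lem:RChar}. The paper's write-up uses Cauchy--Schwarz for the upper bound and an explicit eigenvalue decomposition for the lower bound, whereas you phrase both via spectral-radius bounds, but this is only a cosmetic difference; your closing remark about the sharper constant $\sqrt{n\max\{\Lambda_1,\Lambda_2\}}$ is also correct.
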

\begin{proof}
Fix $x \in \cM$ in which the inequalities in 
the definition of an $\Lp{\infty}$-metric is valid.
Let $\set{e^i}$ be a frame 
for $\tanb_x \cM$ so that  $\mh_{ij}(x) = \delta_{ij}$.
Let $G = (\mg^{ij}(x))$
Then, note that
$$ \Lambda_1 \geq \modulus{\mg(x)}_{\mh(x)}^2 
	= \mg^{ij}(x)\mg^{mn}(x) \mh_{im}(x) \mh_{jn}
	= \sum_{ij} \modulus{\mg^{ij}}^2.$$
Now, let $u \in \tanb_x \cM$, and then 
\begin{multline*}
\modulus{u}_{\mg(x)}^2 
	= \mg(x) (u,u)
	= \mg^{ij} u_i u_j
	\leq \sum_{j}  \modulus{ \sum_{i} \mg^{ij} u_i u_j} \\
	\leq \sum_{j} \cbrac{\sum_{i} \modulus{\mg^{ij} u_j}^2}^{\frac{1}{2}}
			\cbrac{\sum_{i} \modulus{u_i}^2}^{\frac{1}{2}},
\end{multline*}
where the last inequality follows form the Cauchy Schwarz inequality.
Now, by our previous calculation, we have that
$\modulus{\mg^{ij}}^2 \leq \Lambda_1$, and hence, 
$$ 
\sum_j \sum_{i} \modulus{\mg^{ij} u_j}^2 \leq \Lambda_1 n \sum_j \modulus{u_j}^2 
	= \Lambda_1 n \modulus{u}_{\mh(x)}^2.$$
That is,
$\modulus{u}_{\mg(x)}  \leq \sqrt{ \Lambda_1 n }  \modulus{u}_{\mh(x)}.$
Applying this with $\mg^{-1}$ in place of $\mg$ and
$\mh^{-1}$ in place of $\mh$, we further obtain that
$\modulus{u}_{\mg^{-1}(x)}  \leq \sqrt{ \Lambda_1 n }  \modulus{u}_{\mh^{-1}(x)}.$

Now, we note that $\tilde{G} = (\mg^{-1}_{ij}(x)) = G^{-1}$.
Since $G$ is symmetric, let $G = PD\tp{P}$, its eigenvalue
decomposition. Indeed, $D = \diag(\lambda_i)$ and $\lambda_i > 0$
since $G$ is invertible. On letting $D^{-1} = \diag(\sigma_i)$,
note that $\sigma_n = \lambda_1$ and 
we have that $\sigma_n \leq \Lambda_2 n$,
which means that $\lambda_1 \leq \frac{1}{\Lambda_2 n}$.
So, now 
\begin{multline*}
 \modulus{u}_{\mg(x)}^2
	= \tp{u}Gu = \tp{u}PD\tp{P}u = \tp{(\tp{P}u)}D(\tp{P}u)
	= \tp{\tilde{u}}D \tilde{u} \\
	=  \sum_{i} \lambda_i \modulus{\tilde{u}_i}
	\geq \min\set{\lambda_i} \sum_i \modulus{\tilde{u}_i} 
	\geq \frac{1}{\Lambda_2 n} \modulus{\tp{P}u}
	= \frac{1}{\Lambda_2 n}  \modulus{u}^2,
\end{multline*}
since $P$ is an orthonormal matrix.
That is, we have shown that for almost-every $x \in \cM$, 
and every $u \in \tanb_x \cM$, 
$$ (\Lambda_2n)^{-1} \modulus{u}_{\mg(x)} \leq \modulus{u}_{\mh(x)} 
	\leq (\Lambda_1 n)^{-1} \modulus{u}_{\mh(x)}.$$
Thus, by Lemma \ref{Lem:RChar}, $\mg$ is a rough metric.
\end{proof}

Another class of metrics we consider
are metrics arising from coefficients of 
elliptic operators in divergence form.
In particular, see the paper \cite{SC}
by Saloff-Coste, where the author 
explicitly considers this class of metrics,
although he makes a qualitative assumption that
the coefficients are smooth.

Fix some smooth metric $\mh$
and let $A \in \Sect(\Tensors[1,1]\cM,\mh)$
be real-symmetric.
Consider the real-symmetric form 
$J_A[u,v] = \inprod{A\conn u, \conn v}_{\mh}$. 
In order for this to define an elliptic
operator, the natural assumption
is to ask that there exist
$\kappa_1, \kappa_2 > 0$ such that 
$$ \mh(Au,u)(x) \geq \kappa_1 \modulus{u}_{\mh(x)}^2
\quad\text{and}\quad\modulus{A}_{\mh(x)} \leq \kappa_2,$$
for almost-every $x \in \cM$.
For the sake of nomenclature, let us say that
the coefficients $A$ are \emph{elliptic}
if this condition is met.
Under these conditions,
the self-adjoint operator
associated to $J_A$ is
$\Div_A u = \divv_{\mh} A \conn u$.
Such operators have been amply studied
in the literature.

Let us now define a metric associated
to elliptic coefficients $A$ by writing
$\mg(u,v) = \mh(Au, v)$. Then, we
have the following proposition. 

\begin{proposition}
A metric $\mg$ induced from elliptic coefficients $A$
via a smooth metric $\mh$ is a rough metric. 
The metric 
$\mg$ is $(\max\set{\kappa_1, \kappa_2})$-close to $\mh$.
\end{proposition}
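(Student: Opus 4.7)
The plan is to verify the hypotheses of \lemref{Lem:RChar}, which will immediately promote $\mg$ to a rough metric once we establish a two-sided almost-everywhere comparability $c_1 \modulus{u}_\mh \leq \modulus{u}_\mg \leq c_2 \modulus{u}_\mh$ against the smooth reference metric $\mh$. The closeness constant can then be read off directly from the two constants appearing in those bounds.

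First, I would confirm that $\mg$ is a real-symmetric measurable $(2,0)$-tensor field. Symmetry is a direct consequence of the assumed $\mh$-symmetry of $A$, since $\mg(u,v) = \mh(Au,v) = \mh(u,Av) = \mh(Av,u) = \mg(v,u)$, and measurability of $\mg$ as a section is inherited from that of $A$ together with the smoothness of $\mh$. The lower comparability bound is literally the ellipticity hypothesis, rewritten as $\modulus{u}_\mg^2 = \mh(Au,u) \geq \kappa_1 \modulus{u}_\mh^2$ for almost every $x$. The upper bound then follows from Cauchy--Schwarz in the fibrewise inner product $\mh(x)$ together with the operator bound $\modulus{A}_\mh \leq \kappa_2$, giving $\modulus{u}_\mg^2 = \mh(Au,u) \leq \modulus{Au}_\mh \modulus{u}_\mh \leq \kappa_2 \modulus{u}_\mh^2$. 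Combining the two estimates yields $\sqrt{\kappa_1}\,\modulus{u}_\mh \leq \modulus{u}_\mg \leq \sqrt{\kappa_2}\,\modulus{u}_\mh$ almost-everywhere, so \lemref{Lem:RChar} applies and $\mg$ is a rough metric.

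There is essentially no analytical obstacle; the argument is a direct repackaging of the ellipticity of $A$ against the definition of $C$-closeness. The only mild subtlety is cosmetic: the constant written in the statement, $\max\set{\kappa_1,\kappa_2}$, should be read in the same spirit as the explicit bounds above rather than literally, in parallel with the similar tolerance for non-sharp constants already present in the statement of Proposition~\ref{Prop:LpinfR}. As a sanity check and alternative route, one could observe that the two inequalities derived above assert exactly that $\mg$ is an $\Lp{\infty}$-metric relative to $\mh$ with $\Lambda_1 = \kappa_2$ (and a corresponding control on $\mg^{-1}$ coming from inverting the eigenvalue bounds on $A$, which follow from $\kappa_1 > 0$), after which Proposition~\ref{Prop:LpinfR} delivers the same conclusion; this underscores the overarching moral of the subsection that divergence-form-coefficient metrics and $\Lp{\infty}$-metrics describe the same class of rough metrics up to constants.
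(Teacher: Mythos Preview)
Your proof is correct and follows essentially the same route as the paper: the lower bound is the ellipticity hypothesis verbatim, the upper bound is Cauchy--Schwarz followed by the pointwise bound on $A$, and one concludes via \lemref{Lem:RChar}. The only cosmetic difference is that the paper expands $\modulus{Au}_{\mh}$ in an $\mh$-orthonormal frame to extract the operator bound from the tensor norm $\modulus{A}_{\mh}$, whereas you invoke $\modulus{Au}_{\mh} \leq \kappa_2 \modulus{u}_{\mh}$ directly; your observation that the closeness constant is not meant to be read sharply is also in line with the paper's conventions.
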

\begin{proof}
By virtue of the fact that $A$
are elliptic coefficients, we immediately obtain
that for almost-every $x\in \cM$, 
$\modulus{u}_{\mg(x)}^2 = \mh(A u, u) \geq \kappa_1^2 \modulus{u}_{\mh(x)}^2$
for every $u \in \tanb_x \cM$.

For the upper bound, fix an $x$ where
the ellipticity inequality is valid, and
choose a frame so that
let $\mh_{ij}(x) = \delta_{ij}$. 
Then, we have that 
$\modulus{A}_{\mh(x)}^2 = \sum_{ij} \modulus{A^i_j}^2.$
Then, 
$$ \modulus{u}_{\mg(x)}^2 = \mh_x(Au,u)
	\modulus{Au}_{\mh(x)} \modulus{u}_{\mh(x)}.$$
Now, 
$$
\modulus{Au}_{\mh(x)}^2
	= \sum_{j} \modulus{\sum_i A^i_j u_i}^2
	\leq \sum_{j} \cbrac{\sum_i \modulus{A^i_j}^2}
		\cbrac{\sum_i \modulus{u_i}^2}
	\leq \kappa_2^4 \modulus{u}_{\mh(x)}^2.$$
Thus, 
$\modulus{u}_{\mg(x)}^2 \leq \kappa_2^2 \modulus{u}_{\mh(x)}^2.$
By invoking Lemma \ref{Lem:RChar}, we obtain that
$\mg$ is a rough metric.
\end{proof}

As a finale, on collating our
results here, we present the following proposition. 

\begin{proposition}
We have the following:
\begin{enumerate}[(i)] 
\item every rough metric that is close to a smooth one
is an $\Lp{\infty}$-metric,

\item every $\Lp{\infty}$-metric is a metric
induced via elliptic coefficients,

\item every metric induced via elliptic coefficients
	is an $\Lp{\infty}$-metric.
\end{enumerate}
If $\cM$ is compact, then all these notions are
equivalent.
\end{proposition}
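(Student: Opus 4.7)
The plan is to reduce each of the three implications to the tensorial correspondence supplied by Proposition \ref{Prop:OpExist} together with the linear-algebra bookkeeping already carried out inside the proof of Proposition \ref{Prop:LpinfR}; the equivalence on a compact manifold is then obtained from the observation (recorded just before Proposition \ref{Prop:OpExist}) that every rough metric on a compact $\cM$ is $C$-close to some smooth metric.

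For (i), given $\mg$ that is $C$-close to a smooth metric $\mh$, Proposition \ref{Prop:OpExist} produces a real, symmetric, almost-everywhere positive and invertible bundle endomorphism $\B$ satisfying $\mh(\B u, v) = \mg(u,v)$ with explicit two-sided bounds on $\B$ and $\B^{-1}$. Reading off $\mg$ in any smooth local frame normalising $\mh$ identifies the local matrix of $\mg$ with $\B$; hence $\mg^{-1}$ exists almost everywhere, and the bounds on $\B$ and $\B^{-1}$ translate directly into $|\mg|_\mh \leq \Lambda_1$ and $|\mg^{-1}|_\mh \leq \Lambda_2$, so $\mg$ is an $\Lp{\infty}$-metric.

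For (ii) and (iii) I would proceed symmetrically. In (ii), Proposition \ref{Prop:LpinfR} already shows that an $\Lp{\infty}$-metric $\mg$ is a rough metric $C$-close to the background smooth $\mh$, so another application of Proposition \ref{Prop:OpExist} yields $\B$; setting $A := \B$ delivers elliptic coefficients, with $\kappa_1, \kappa_2$ inherited from the bounds on $\B, \B^{-1}$ and symmetry from Proposition \ref{Prop:OpExist}. In (iii), when $\mg(u,v) = \mh(Au, v)$ with $A$ elliptic, pointwise positive-definiteness supplies a bounded inverse $A^{-1}$, and both $|A|_\mh$ and $|A^{-1}|_\mh$ being controlled by the same finite-dimensional eigendecomposition used in the proof of Proposition \ref{Prop:LpinfR} gives the two bounds required by the $\Lp{\infty}$ definition.

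To close the equivalence in the compact case, I would combine (i)--(iii) with the two easy directions already in hand: every $\Lp{\infty}$-metric is rough by Proposition \ref{Prop:LpinfR}, and every metric from elliptic coefficients is rough by the proposition immediately preceding the statement. The only direction genuinely needing compactness is (rough) $\Rightarrow$ ($\Lp{\infty}$), which is precisely where the density observation is invoked. I do not anticipate any real technical obstacle; the single item to watch is the bookkeeping of the constants $C$, $\Lambda_i$, $\kappa_i$, all of which interconvert via the pointwise diagonalisation argument already executed in Proposition \ref{Prop:LpinfR}.
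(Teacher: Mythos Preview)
Your proposal is correct and follows essentially the same route as the paper: part (i) via Proposition \ref{Prop:OpExist} and a pointwise computation in an $\mh$-orthonormal frame, part (ii) via Proposition \ref{Prop:LpinfR} followed by Proposition \ref{Prop:OpExist}, and the compact case via the observation that every rough metric is $C$-close to a smooth one. The only minor divergence is in (iii): the paper reduces to (i) by first invoking the preceding proposition (elliptic-coefficient metrics are rough and $C$-close to $\mh$), whereas you argue directly from the eigendecomposition of $A$ to bound $|A^{-1}|_{\mh}$; both are equally valid and amount to the same linear algebra.
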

\begin{proof}
For (i), suppose that
$\mg$ is a rough metric and that it is $C$-close
to a smooth metric $\mh$. Then,
by Proposition \ref{Prop:OpExist}, we obtain 
a $B \in \Sect(\Tensors[1,1]\cM)$ so that
$\mg(u,v) = \mh(Bu,v)$ and 
$$C^{-2}\modulus{u}_{\mh(x)} \leq  \modulus{B(x)u}_{\mh(x)} 
	\leq C^2 \modulus{u}_{\mh(x)},$$
for almost-every $x$. Fix an $x$ where
this inequality is valid and choose a
$\mh$ orthonormal frame $\set{e_i}$
at $x$. Then, 
$$
\modulus{\mg(x)}_{\mh(x)}^2 
	= \sum_{ij} \modulus{\mg^{ij}(x)}^2
	= \sum_{ij} \modulus{\mh(Be_i, e_j)}
	\leq \sum_{ij} \modulus{B e_i}_{\mh(x)} \modulus{e_j}_{\mh(x)}
	\leq n^2 C^2.$$
So, $\modulus{\mg}_{\mh} \leq n^2 C^2$
almost-everywhere. 
Since $\mg^{-1}(u,v) = \mh^{-1}(B^{-1} u,v)$, 
a similar calculation shows that
$\modulus{\mg^{-1}}_{\mh^{-1}} \leq n^2 C^2.$

To prove (ii), suppose that $\mg$ is an $\Lp{\infty}$-metric.
Then, we have shown in Proposition \ref{Prop:LpinfR}
that it is a rough metric that is close to a 
smooth metric $\mh$. Hence, by Proposition \ref{Prop:OpExist}, 
we have $B \in \Sect(\Tensors[1,1]\cM)$
which can easily be checked to satisfy ellipticity.
Hence, $\mg(u,v) = \mh(Bu,v)$, i.e., it is 
a metric induced by elliptic coefficients.
Then, it is a rough metric that is $C$-close to a smooth
one and by (i), we obtain that it is an $\Lp{\infty}$-metric.

If further we assume that $\cM$ is a compact manifold, 
then near every rough metric $\mg$, there is a
smooth metric $\mh$, and hence, by (i), we
obtain that every rough metric is $\Lp{\infty}$
or equivalently, defined via elliptic coefficients.
\end{proof}

In particular, this proposition gives credence
to the notion of a rough metric since
 it is a sufficiently general notion that is
able to capture the behaviour of these other 
aforementioned low regularity metrics.  

\subsection{Lebesgue and Sobolev space theory}

A more pertinent feature of rough metrics is
that they admit a Sobolev space theory.
In order to make our exposition shorter
and more accessible, from here on, we assume
that $\cM$ is \emph{compact}.
First, we note that since $(\cM,\mu_\mg)$
is a measure space, we obtain a Lebesgue
theory.
Let $\Lp{p}(\Tensors[p,q]\cM,\mg)$ denote the
$p$-integrable Lebesgue spaces over the bundle of $(p,q)$ tensors.
We write $\Lp{p}(\cM,\mg)$ for the case that $p = q = 0$.
 We  quote the following result which is listed 
as Proposition 8 in \cite{BRough}.

\begin{proposition}
\label{Prop:OpSobRough}
For a rough metric $\mg$,
$\conn[p]: \Ck{\infty}\intersect \Lp{p}(\cM) \to \Ck{\infty}\intersect \Lp{p}(\cotanb\cM)$
and $\conn[c]: \Ck[c]{\infty}(\cM) \to \Ck[c]{\infty}(\cotanb\cM)$
given by $\conn[p] = \extd$ and $\conn[c]  = \extd$ on the respective domains
are closable, densely-defined operators. 
\end{proposition}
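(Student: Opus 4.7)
The plan is to establish density and closability separately, with closability being the main content and handled by a reduction to the classical closability of distributional derivatives on open subsets of $\R^n$.

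For density, I would first observe that although $\cM$ is allowed to be non-compact, it is $\sigma$-compact by second countability of the smooth structure. The expression $d\mu_\mg = \sqrt{\det \mg_{ij}}\, d\Leb$ in any locally comparable chart $(\psi, U)$, together with the argument of Proposition \ref{Prop:MeasRep} (which compares $\mu_\mg \rest{U}$ to the Radon measure associated with the Euclidean pullback metric $\pullb{\psi}\delta$), shows that $\mu_\mg$ is locally Radon, hence Radon on all of $\cM$. Standard mollification inside each chart, combined with a cutoff subordinate to a smooth exhaustion of $\cM$ by compacts, then yields density of $\Ck[c]{\infty}(\cM)$ in $\Lp{p}(\cM,\mg)$ for every $p \in [1,\infty)$; the intermediate space $\Ck{\infty} \cap \Lp{p}(\cM)$ inherits density as it sits between $\Ck[c]{\infty}(\cM)$ and $\Lp{p}(\cM,\mg)$.

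For closability of $\conn[c]$, the task is to show that whenever $u_n \in \Ck[c]{\infty}(\cM)$ satisfies $u_n \to 0$ in $\Lp{p}(\cM,\mg)$ and $\extd u_n \to \omega$ in $\Lp{p}(\cotanb\cM,\mg)$, we must have $\omega = 0$. I would fix a locally comparable chart $(\psi, U)$ with constant $C \geq 1$. The defining inequality of such a chart, applied both to functions and to the dual pairings on $1$-forms, gives that the $\mg$-induced $\Lp{p}$-norms on $U$ are equivalent to the Euclidean $\Lp{p}$-norms on $\psi(U)$. Restricting the hypothesis to $U$ and transporting through $\psi$ reduces the situation to the following in $\psi(U) \subset \R^n$ equipped with Lebesgue measure: $u_n \circ \psi^{-1} \to 0$ in $\Lp{p}(\psi(U))$ while each coordinate derivative $\partial_i (u_n \circ \psi^{-1})$ converges in $\Lp{p}$ to the $i$-th component of $\pullb{(\psi^{-1})}\omega$.

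The classical closability of the weak exterior derivative on open subsets of $\R^n$ now applies: convergence in $\Lp{p}$ implies convergence in $\script{D}'(\psi(U))$, so $\partial_i(u_n \circ \psi^{-1}) \to 0$ in distributions, and uniqueness of distributional limits forces $\pullb{(\psi^{-1})}\omega = 0$ on $\psi(U)$, hence $\omega \rest{U} = 0$. Since locally comparable charts cover $\cM$, this gives $\omega = 0$ on $\cM$. The same argument handles $\conn[p]$ verbatim, as it uses only smoothness and $\Lp{p}$-integrability of the approximating sequence, not compact support. The main obstacle is not deep but requires care in the bookkeeping: one must verify that $\extd$ commutes with the chart pullback (which it does because $\cM$ carries a genuine smooth differentiable structure even though $\mg$ has only measurable coefficients), and confirm that the equivalence of $\Lp{p}$-norms transfers convergence in both directions. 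Crucially, neither compactness of $\cM$ nor anything beyond local metric comparability enters the argument, which is exactly what is needed for the non-compact setting in which $\conn[p]$ and $\conn[c]$ are genuinely distinct operators.
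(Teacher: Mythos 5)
The paper does not prove this proposition; it is quoted verbatim as Proposition 8 of \cite{BRough}, and the argument given there is the same local-comparability reduction you use. Your proof is correct: the local comparability condition makes the $\Lp{p}$-norms (of functions, of $1$-forms via the dual pairing, and of the measure) on a chart equivalent to the Euclidean ones, so density follows from the Radon property of $\mu_\mg$ plus mollification, and closability follows because $\Lp{p}$-convergence implies distributional convergence in the chart, where the classical closedness of the weak derivative applies --- the only cosmetic point worth recording is that the domain of $\conn[p]$ should be read as those $u \in \Ck{\infty}\intersect\Lp{p}(\cM)$ with $\extd u \in \Lp{p}(\cotanb\cM)$, which still contains $\Ck[c]{\infty}(\cM)$ and hence is dense.
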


As a consequence,  we define the  Sobolev spaces as function spaces
by writing 
$\Sob{1,p}(\cM) = \dom(\close{\conn[p]})$ and 
$\Sob[0]{1,p} = \dom(\close{\conn[c]})$.
\begin{proposition}
\label{Prop:RoughP}
Let $\mg$ and $\mgt$ be two $C$-close rough metrics
on a compact manifold $\cM$.
Then,
\begin{enumerate}[(i)]
\item whenever $p \in [1, \infty)$, 
	$\Lp{p}(\Tensors[r,s]\cM,\mg) = \Lp{p}(\Tensors[r,s]\cM, \mgt)$ with
	$$C^{-\cbrac{r+s + \frac{n}{2p}}} \norm{u}_{p,\mgt}   \leq \norm{u}_{p,\mg} \leq C^{r+s + \frac{n}{2p}} \norm{u}_{p, \mgt},$$
\item for $p = \infty$,
	$\Lp{\infty}(\Tensors[r,s]\cM,\mg) = \Lp{\infty}(\Tensors[r,s]\cM, \mgt)$
	with
	$$C^{-(r+s)} \norm{u}_{\infty, \mgt} \leq \norm{u}_{\infty,\mg} \leq C^{r+s} \norm{u}_{\infty,\mgt},$$
\item the Sobolev spaces 
	$\Sob{1,p}(\cM,\mg) = \Sob{1,p}(\cM, \mgt) = \Sob[0]{1,p}(\cM, \mg) = \Sob[0]{1,p}(\cM, \mgt)$
	with 
	$$C^{-\cbrac{1 + \frac{n}{2p}}} \norm{u}_{\Sob{1,p},\mgt} 
		\leq \norm{u}_{\Sob{1,p},\mg} \leq C^{{1 + \frac{n}{2p}}} \norm{u}_{\Sob{1,p},\mgt},$$
\item the Sobolev spaces
	$\Sob{\extd,p}(\cM,\mg) = \Sob{\extd,p}(\cM,\mgt)$ with 
	$$C^{-\cbrac{n + \frac{n}{2p}}} \norm{u}_{\Sob{\extd, p}, \mgt}
		\leq \norm{u}_{\Sob{\extd,p},\mg}
		\leq C^{{n + \frac{n}{2p}}} \norm{u}_{\Sob{\extd, p}, \mgt},$$
\item the divergence operators satisfy 
	$\divv_{\mg} = \uptheta^{-1}\divv_{\mgt}\uptheta \B$.
\item the Laplacians satisfy
	$\Lap_{\mg} = -\uptheta^{-1} \divv_{\mgt}\uptheta \B \conn$.
\end{enumerate} 
\end{proposition}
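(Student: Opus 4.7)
The strategy is to reduce everything to the pointwise comparison of tensor norms between $\mg$ and $\mgt$, together with the change-of-measure formula from Proposition \ref{Prop:MeasRep}, and then to use that the exterior derivative $\extd$ depends only on the smooth structure (not on the metric) to transfer Sobolev identifications.

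For (i) and (ii), I would begin by observing that the $C$-closeness inequality for vectors upgrades by duality and tensor products to a pointwise bound of the form
$$C^{-(r+s)}\lmodulus{u}_{\mgt(x)} \leq \lmodulus{u}_{\mg(x)} \leq C^{r+s}\lmodulus{u}_{\mgt(x)}$$
for $u \in (\Tensors[r,s])_x\cM$, valid $\mu_\mgt$-a.e.; this is routine since an $(r,s)$-tensor pairs with $r$ covectors and $s$ vectors. Combining this with the measure comparison $C^{-n/2}\mu_\mg \leq \mu_\mgt \leq C^{n/2}\mu_\mg$ from Proposition \ref{Prop:MeasRep} yields (i) by raising to the $p$-th power, integrating, and taking the $p$-th root; (ii) is the essentially supremum version, for which no measure factor enters.

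For (iii) and (iv), the key point is that $\extd$ is defined via the differentiable structure of $\cM$ alone, so smooth functions (resp. smooth forms) have the same differential regardless of whether we regard them as living over $(\cM, \mg)$ or $(\cM, \mgt)$. Consequently (i) applied to $u$ and to $\extd u$ gives an equivalence of norms $\norm{\mdot}_{\Sob{1,p},\mg} \sim \norm{\mdot}_{\Sob{1,p},\mgt}$ on $\Ck{\infty}(\cM)$ with the stated constants, and similarly on $\Ck{\infty}(\Forms\cM)$ for (iv). Invoking Proposition \ref{Prop:OpSobRough} (closability and density of smooth functions) then shows that the closures coincide as sets and that the two graph norms are equivalent. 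On a compact manifold the equality $\Sob{1,p} = \Sob[0]{1,p}$ holds (the constant function $1$ is already in $\dom(\conn[p])$ and an exhaustion by cut-offs is available), so the four Sobolev spaces in (iii) collapse to a single one.

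For (v) and (vi), I would use that the divergence operator on each rough metric is characterized as the formal adjoint of $\conn$ via the duality pairing
$$\int_\cM \mg(X, \conn\phi)\,d\mu_\mg = -\int_\cM (\divv_\mg X)\phi\,d\mu_\mg$$
for $\phi \in \Ck{\infty}(\cM)$ and $X \in \Ck{\infty}(\tanb\cM)$. Using $\mg(X,Y) = \mgt(\B X, Y)$ from Proposition \ref{Prop:OpExist} on the left-hand side and $d\mu_\mg = \uptheta\, d\mu_\mgt$ from Proposition \ref{Prop:MeasRep} (with $\uptheta = \sqrt{\det\B}$), we rewrite the integral as $\int \mgt(\uptheta\B X, \conn\phi)\,d\mu_\mgt$, and then integrate by parts with respect to $\mgt$ to identify $\divv_\mg X = \uptheta^{-1}\divv_\mgt(\uptheta\B X)$ by the fundamental lemma of the calculus of variations. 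Applying this identity with $X = \conn u$ and using $\Lap_\mg = -\divv_\mg \conn$ gives (vi). The only delicate point here is that $\B$ and $\uptheta$ are merely measurable, so the adjoint identity must be interpreted in the weak/distributional sense; but since both sides are $\Lp{1}_{\rm loc}$ on test functions this poses no obstacle, which is why I do not expect any step to present a serious difficulty beyond careful bookkeeping of the constants $C$ in (i)--(iv).
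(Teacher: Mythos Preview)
The paper does not actually prove this proposition; it is stated without proof, as a collection of facts about rough metrics drawn from \cite{BRough}. So there is no ``paper's own proof'' to compare against.

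Your sketch is correct and is the standard route. The pointwise tensor-norm bound $C^{-(r+s)}\modulus{u}_{\mgt(x)} \leq \modulus{u}_{\mg(x)} \leq C^{r+s}\modulus{u}_{\mgt(x)}$ follows exactly as you say, by duality for covectors and multiplicativity under tensor product; combined with the measure comparison from Proposition~\ref{Prop:MeasRep} this gives (i) and (ii). For (iii) and (iv) the metric-independence of $\extd$ is indeed the key observation, and the passage from norm equivalence on smooth forms to equality of the closed Sobolev spaces is clean via Proposition~\ref{Prop:OpSobRough}. Your remark that $\Sob{1,p} = \Sob[0]{1,p}$ on a compact manifold (without boundary) because $\Ck[c]{\infty}(\cM) = \Ck{\infty}(\cM)$ is the right justification. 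For (v) and (vi) the weak adjoint computation you outline is exactly how one establishes the formula, and your caveat about $\B$ and $\uptheta$ being merely measurable is well placed: the identity $\divv_\mg X = \uptheta^{-1}\divv_\mgt(\uptheta \B X)$ is to be read as an equality of distributions (or, equivalently, as an identity between the associated quadratic forms), which is precisely how the paper uses it later in Proposition~\ref{Prop:Uni1}.

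One minor bookkeeping point for (iv): the exponent $n + \tfrac{n}{2p}$ arises because the Sobolev space $\Sob{\extd,p}$ is taken over the full exterior algebra $\Forms\cM = \oplus_{k=0}^n \Forms[k]\cM$, and the worst tensor rank appearing is $n$; you implicitly use this but it is worth making explicit.
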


We emphasise (iii), which demonstrates
that $\Sob{1,2}(\cM,\mg) = \Sob{1,2}(\cM,\mgt)$
for any rough metric $\mg$, since, as
we have aforementioned, compactness guarantees
the existence of a smooth metric $\mgt$
 that is  $C$-close to $\mg$.
\section{Main results and applications}
\label{Sec:Res}

\subsection{Existence and regularity of the flow}

The broader perspective underpinning our analysis in
this paper is to relate the regularity of the
heat kernel to the regularity of the Gigli-Mantegazza flow. Indeed, this
is to be expected simply from inspection 
of the main governing  equation 
\eqref{Def:E} for this flow. 

In \S\ref{Sec:ER}, we  consider
$\Lp{\infty}$-coefficient differential operators
on smooth manifolds, and we obtain solutions
to more general equations similar to \eqref{Def:E}.
Furthermore, we conduct
operator theory on operators
of the type $x \mapsto \divv_{\mg}\omega_x \conn$
in order to define a notion of derivative 
that is weak enough to account for the lack of
regularity of the coefficients $\omega_x$  but 
sufficiently strong enough to be useful 
to demonstrate the regularity of the
the flow \eqref{Def:GM}.
In \S\ref{Sec:Flow}, we prove some 
auxiliary facts needed to ensure that
\eqref{Def:GM} indeed does define a
Riemannian metric, and on coupling our
main results from \S\ref{Sec:ER},
we obtain the following theorem.
It is the most general geometric result
that we showcase in this paper. 
Its proof can be found in \S\ref{Sec:GMFlow}.
 
\begin{theorem}
\label{Thm:GMFlow}
Let $\cM$ be a smooth, compact manifold
and $\mg$ a rough metric. Suppose that
the heat kernel $(x,y) \mapsto \hk^\mg_t(x,y) \in \Ck{0,1}(\cM^2)$
and that on an open set $\emptyset \neq \cN$, 
$(x,y) \mapsto\hk^{\mg}_t(x,y) \in \Ck{k}(\cN^2)$
where $k \geq 2$.
Then, for $t > 0$, $\mg_t$ is a Riemannian metric
on $\cN$ of regularity $\Ck{k - 2,1}$.
\end{theorem}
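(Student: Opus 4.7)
My approach is to recast \eqref{Def:E} as a divergence-form elliptic PDE on a smooth background, solve it parametrically in $x$, and then differentiate \eqref{Def:GM} under the integral sign. Since $\cM$ is compact, I would fix a smooth metric $\mgt$ that is $C$-close to $\mg$, let $\B\in\Sect(\cotanb\cM\tensor\tanb\cM)$ be the intertwining $(1,1)$-tensor from Proposition~\ref{Prop:OpExist}, and let $\uptheta = d\mu_\mg/d\mu_{\mgt}$ as in Proposition~\ref{Prop:MeasRep}. By Proposition~\ref{Prop:RoughP}(v), the continuity equation \eqref{Def:E} at fixed $(t,x,v)$ rewrites as
$$-\divv_{\mgt}\bigl(\hk^\mg_t(x,\cdot)\,\uptheta\,\B\,\conn_y\phi_{t,x,v}\bigr) = \uptheta\,(\extd_x\hk^\mg_t(x,\cdot))(v)$$
on $\cM$, with the mean-zero constraint. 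Since $\hk^\mg_t\in\Ck{0,1}(\cM^2)$ is continuous and bounded below by a positive constant, and $\uptheta\B$ is uniformly elliptic, the associated sesquilinear form is coercive on the mean-zero subspace of $\Sob{1,2}(\cM,\mgt)=\Sob{1,2}(\cM,\mg)$, and Lax-Milgram yields a unique weak solution $\phi_{t,x,v}\in\Sob{1,2}(\cM)$ for each $x\in\cM$. The integrand of \eqref{Def:GM} is then $\Lp{1}$ by Cauchy-Schwarz, so $\mg_t(u,v)(x)$ is well-defined everywhere, and in particular on $\cN$.

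To obtain regularity on $\cN$, I would apply the operator-theoretic machinery developed in \S\ref{Sec:ER} to the parameter family $L_x := -\divv_{\mgt}(\hk^\mg_t(x,\cdot)\,\uptheta\,\B\,\conn)$, whose $\Lp{\infty}$-coefficient depends $\Ck{k}$-regularly on $x\in\cN$ by hypothesis. Formally differentiating \eqref{Def:E} in $x$, the function $\partial_{x_i}\phi_{t,x,v}$ satisfies a divergence-form equation of the same shape with a right-hand side involving higher mixed derivatives of the heat kernel and the already-controlled gradient $\conn_y\phi_{t,x,v}$; Lax-Milgram solves for $\partial_{x_i}\phi$, and an induction on the order of $x$-differentiation iterates this up to order $k-1$. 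The output is the existence of mixed derivatives $\partial_x^\alpha\conn_y\phi_{t,x,v}\in\Lp{2}_y$ for $|\alpha|\leq k-2$, with Lipschitz dependence at the top order flowing from the Lipschitz dependence of the coefficient in $x$. Differentiating \eqref{Def:GM} under the integral sign and bookkeeping the two sources of $x$-dependence (the kernel $\hk^\mg_t(x,\cdot)$ and the gradients $\conn_y\phi_{t,x,\cdot}$) then gives $\mg_t\in\Ck{k-2,1}(\cN)$. Positive-definiteness and symmetry, which upgrade $\mg_t$ from a quadratic form to a Riemannian metric, are supplied by the auxiliary computations of \S\ref{Sec:Flow}.

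The main obstacle is the middle step: transferring the $\Ck{k}$ regularity of $\hk^\mg_t$ on $\cN^2$ into $x$-regularity of the PDE solution $\phi_{t,x,v}$, given that $\phi$ is only weakly defined in $y$ globally and so cannot be bootstrapped via classical elliptic regularity in $y$. The remedy, and precisely the content of \S\ref{Sec:ER}, is to treat $x\mapsto L_x^{-1}$ as a parameter-dependent family of bounded operators between Sobolev spaces and to use resolvent-type perturbation to transport $x$-regularity out of the coefficient into the solution. This is what dictates the two-derivative loss: one derivative is consumed in extracting $\conn_y\phi$ from the weak equation, and a second in passing from $\Ck{k-1}$-dependence of $L_x$ to Lipschitz regularity at the final $x$-derivative of $\mg_t$.
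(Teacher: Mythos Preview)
Your proposal is correct and follows essentially the same route as the paper: reduce to the smooth background $\mgt$, solve \eqref{Def:E} via Lax--Milgram/Proposition~\ref{Prop:EUF}, invoke the operator-theoretic differentiation of $x\mapsto \Dir_x$ from \S\ref{Sec:ER} to get $x$-regularity of $\phi_{t,x,v}$, and defer non-degeneracy to the heat-flow lemmas of \S\ref{Sec:Flow}. The one simplification the paper makes that you do not is the identity
\[
\mg_t(u,u)(x)=\int_\cM \hk^\mg_t(x,y)\,\modulus{\conn\phi_{t,x,u}(y)}_\mg^2\,d\mu_\mg(y)=J_x[\phi_{t,x,u},\phi_{t,x,u}]=\inprod{\eta_{x,u},\phi_{t,x,u}}_\mg,
\]
which collapses the two sources of $x$-dependence in \eqref{Def:GM} into a single $\Lp{2}$-pairing and lets Theorem~\ref{Thm:Reg} apply directly with $\omega_x=\hk^\mg_t(x,\cdot)\in\Ck{k}$ and $\eta_x=(\extd_x\hk^\mg_t)(u)\in\Ck{k-1}$, yielding $\Ck{\min\{k,k-1\}-1,1}=\Ck{k-2,1}$ without separately tracking $\partial_x^\alpha\conn_y\phi$.
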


We remark that allowing for a 
Lipschitz heat kernel is
neither a restriction nor is it too general.
We will see in the following section that
the most important class of objects
we consider, namely when $(\cM,\mg)$
is an $\RCD(K,N)$ space, will admit such a heat kernel.

The reader may find it curious
that, even though we assume
that the heat kernel is $\Ck{k}$
away from the 
singular region,
and only a single derivative of
the heat kernel appears in the source
term of  \eqref{Def:E}, we are only able
to assert that the  resulting  flow is $\Ck{k-2,1}$.
In a sense, it is because the global regularity 
of the heat kernel, which is only Lipschitz, 
becomes significant in proving the 
continuity of the $(k-1)$-th  partial derivatives.
We remark that it may be possible 
to assert this continuity 
performing the operator theory of
$x \mapsto \divv_{\mg} \hk^\mg_t(x,\mdot)\conn$ 
with greater care than we have done.  

For a $\Ck{1}$ global heat kernel,
we are able to assert that the
$(k-1)$-th partial derivatives are indeed
continuous. This is the content of
the following theorem.
Note that, unlike Theorem \ref{Thm:GMFlow}
where we assumed that the 
heat kernel was at least twice
continuously differentiable on 
the non-singular part,
we allow for heat kernels with 
only a single derivative 
on the non-singular region.
This theorem is an immediate consequence
of Theorem \ref{Thm:BetReg} in \S\ref{Sec:HighReg}.

\begin{theorem}
\label{Thm:GMFlowReg}
Let $\cM$ be a smooth, compact manifold
and $\mg$ a rough metric. Suppose that
the heat kernel $(x,y) \mapsto \hk^\mg_t(x,y) \in \Ck{1}(\cM^2)$
and that on an open set $\emptyset \neq \cN$, 
$(x,y) \mapsto\hk^{\mg}_t(x,y) \in \Ck{k}(\cN^2)$
where $k \geq 1$.
Then, for $t > 0$, $\mg_t$ is a Riemannian metric
on $\cN$ of regularity $\Ck{k - 1}$.
\end{theorem}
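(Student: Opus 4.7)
The plan is to view the continuity equation \eqref{Def:E} as a family, parametrised by $x\in\cN$, of divergence-form elliptic PDEs in the $y$ variable, and then propagate $x$-regularity through this PDE by exploiting the improved global regularity of the heat kernel.

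First I would reduce the problem to bounded-measurable coefficients over a smooth background. By compactness of $\cM$, there exists a smooth metric $\mgt$ that is $C$-close to $\mg$; applying Proposition \ref{Prop:OpExist} produces a transformation tensor $\B$, and Proposition \ref{Prop:MeasRep} supplies the Radon--Nikodym derivative $\theta$, so that \eqref{Def:E} rewrites as
\[
-\divv_{\mgt}\bigl(\hk^\mg_t(x,\cdot)\,\B\,\theta\,\conn \phi_{t,x,v}\bigr) = \theta\,\extd_x\bigl(\hk^\mg_t(x,\cdot)\bigr)(v),
\]
as in the introduction. This moves the roughness of $\mg$ into the coefficients while preserving the divergence-form structure over the smooth background, so the $y$-side existence and Sobolev theory of \S\ref{Sec:ER} applies to the family $L_x := -\divv_{\mgt}\hk^\mg_t(x,\cdot)\,\B\,\theta\,\conn$ acting on $\Sob{1,2}(\cM,\mgt)$.

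Second, I would iteratively differentiate in $x$. On $\cN$ we have $\hk^\mg_t\in\Ck{k}(\cN^2)$, so for every multi-index $\alpha$ with $\modulus{\alpha}\leq k-1$, formal differentiation of the rewritten continuity equation produces another divergence equation for $\partial_x^\alpha\phi_{t,x,v}$ whose operator is again $L_x$ and whose source term is built from $x$-derivatives of $\hk^\mg_t$ of order at most $\modulus{\alpha}+1\leq k$, together with commutator pieces involving the lower-order $\partial_x^\beta\phi_{t,x,v}$ with $\modulus{\beta}<\modulus{\alpha}$. By induction on $\modulus{\alpha}$ and the operator-theoretic framework for the family $x\mapsto L_x$ developed in \S\ref{Sec:ER}, each $\partial_x^\alpha\phi_{t,x,v}$ is obtained as a $\Sob{1,2}$-valued function of $x\in\cN$ with continuous $x$-dependence. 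Substituting into \eqref{Def:GM} and differentiating under the integral sign then expresses $\partial_x^\alpha \mg_t(u,v)(x)$ as a finite sum of $y$-integrals of products of $x$-derivatives of $\hk^\mg_t$ with gradients of $\partial_x^\beta\phi_{t,x,v}$.

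The main obstacle, and the reason the conclusion improves from $\Ck{k-2,1}$ in Theorem \ref{Thm:GMFlow} to $\Ck{k-1}$ here, is establishing genuine \emph{continuity} rather than merely almost-everywhere Lipschitz behaviour of the top-order derivative $\partial_x^{k-1}\mg_t$. The dominant contribution there pairs a top-order $x$-derivative of the heat kernel, which is only guaranteed on $\cN^2$, against gradients of $\phi_{t,x,v}$ whose $\Sob{1,2}$-norms depend on values of $\hk^\mg_t$ on all of $\cM^2$. Under the strengthened global hypothesis $\hk^\mg_t\in\Ck{1}(\cM^2)$, the source term $\theta\,\extd_x\hk^\mg_t(x,\cdot)(v)$ is continuous in $x$ with values in $\Lp{2}(\cM,\mgt)$, which -- combined with strong continuity of the resolvents of $x\mapsto L_x$ supplied by the operator theory of \S\ref{Sec:ER} -- makes a dominated convergence argument go through and yields pointwise continuity in $x$ of the top-order derivative rather than only an a.e. Lipschitz bound. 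This is precisely the improvement that will be encoded in Theorem \ref{Thm:BetReg} of \S\ref{Sec:HighReg}, from which the present statement follows. Symmetry and positivity of $\mg_t$ are inherited from the general construction in \S\ref{Sec:Flow}, so $\mg_t$ is a Riemannian metric on $\cN$ of class $\Ck{k-1}$ as claimed.
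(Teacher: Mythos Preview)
Your proposal is correct and follows essentially the same route as the paper: the statement is deduced directly from Theorem \ref{Thm:BetReg}, applied with $\omega_x = \hk^\mg_t(x,\cdot)$ and $\eta_x = (\extd_x\hk^\mg_t(x,\cdot))(v)$ so that $\min\{k,l\} = k-1$, together with the Riemannian-metric verification already carried out in \S\ref{Sec:GMFlow}. Your sketch of the mechanism behind Theorem \ref{Thm:BetReg}---that the global $\Ck{1}$ hypothesis upgrades the top-order a.e.\ Lipschitz bound to genuine continuity via strong continuity of the resolvents $\Div_x^{-1}$---matches the paper's argument exactly.
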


We remark that for the case $k = 1$, Bandara in \cite{BCont}
asserts the continuity of $x \to \mg_t(x)$ on $\cN$
without imposing any global regularity assumptions on 
$(x,y) \mapsto \hk_t^\mg(x,y)$. This is accomplished by reducing 
this assertion to a \emph{homogeneous} Kato square root problem
that he proves in this context. 

Typically, in an open region where
the metric is $\Ck{k}$ for $k \geq 1$,
we expect the heat kernel to improve
to $\Ck{k+1}$. This is an immediate
consequence of the fact that the region is
open, and  because  we can write the Laplacian
via a change of coordinates as a
non-divergence form equation with $\Ck{k-1}$
coefficients. We then obtain regularity
via Schauder theory. 
This analysis is conducted in \S\ref{Sec:HKRegMet}.
 By considering  the situation where
a rough metric improves to a $\Ck{k}$
metric away from a closed singular
set, Theorem \ref{Thm:GMFlow} yields a
metric tensor away from the singular
set that is of regularity 
$\Ck{k-1,1}$. That is, the
resulting flow may be \emph{more} 
singular than the initial metric 
inside such a region. However,
by coupling the results of
\S\ref{Sec:HKRegMet} with 
Theorem \ref{Thm:GMFlowReg},
we are able to assert that the
flow remains at least
as regular as the initial metric.

\begin{theorem}
Let $\cM$ be a smooth, compact manifold
and $\mg \in \Ck{k}$ for $k \geq 1$.
Then, the flow $\mg_t \in \Ck{k}$ for each 
$t > 0$. 
\end{theorem}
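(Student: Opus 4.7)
The plan is to exploit the regularity gain afforded by the heat equation on the non-singular region (which in this case is all of $\cM$) and then invoke \thmref{Thm:GMFlowReg}, which is precisely designed to convert regularity of the heat kernel into regularity of the flow, losing only one derivative.

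First, since $\cM$ is a smooth compact manifold and $\mg \in \Ck{k}$ globally with $k \geq 1$, there is no singular set to worry about. By the discussion of \S\ref{Sec:HKRegMet}, in a coordinate patch where $\mg \in \Ck{k}$, the Laplacian $\Lap_\mg$ can be written as a non-divergence form operator with $\Ck{k-1}$ coefficients, so Schauder theory applied to the parabolic equation $\partial_t \hk^\mg_t - \Lap_\mg \hk^\mg_t = 0$ upgrades the heat kernel by one derivative in the spatial variables. Consequently, $(x,y) \mapsto \hk^\mg_t(x,y) \in \Ck{k+1}(\cM \times \cM)$ for every $t > 0$.

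Second, I would apply \thmref{Thm:GMFlowReg} with the open set $\cN = \cM$ (which is non-empty), and with the integer $k$ in the statement of that theorem replaced by $k+1$. The global regularity hypothesis there is $\Ck{1}$, which is satisfied since $k+1 \geq 2$; the regularity on $\cN$ is $\Ck{k+1}$, exactly what we obtained in the previous step. The conclusion of \thmref{Thm:GMFlowReg} then produces a Riemannian metric $\mg_t \in \Ck{(k+1)-1}(\cM) = \Ck{k}(\cM)$ for each $t > 0$, as desired.

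There is no real obstacle: all the hard analytic work, both the operator-theoretic handling of the divergence form continuity equation \eqref{Def:E} and the interior regularity gain for the heat kernel, has already been packaged into \S\ref{Sec:HKRegMet} and into \thmref{Thm:GMFlowReg}. The one subtlety worth double-checking is that Schauder theory in the Sec:HKRegMet setting genuinely delivers joint $\Ck{k+1}$ regularity in $(x,y)$ rather than regularity only in each variable separately; but symmetry of the heat kernel together with interior elliptic/parabolic Schauder estimates, applied on products of coordinate balls, handles this in a standard way.
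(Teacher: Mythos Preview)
Your proposal is correct and follows precisely the route the paper takes: invoke the heat-kernel regularity gain from \S\ref{Sec:HKRegMet} (i.e., \thmref{Thm:HKReg}, which actually uses elliptic rather than parabolic Schauder theory at fixed $t$, but this is a cosmetic difference) to obtain $\hk^\mg_t \in \Ck{k+1}(\cM^2)$, and then feed this into \thmref{Thm:GMFlowReg} with $\cN = \cM$ to recover $\mg_t \in \Ck{k}(\cM)$.
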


 We emphasise that we are not providing
sharp regularity information via these 
theorems.  That is,
we are unable to assert that if the initial 
metric is $\Ck{k}\setminus \Ck{k-1}$,
then the resulting flow is also
$\Ck{k}\setminus \Ck{k-1}$. In fact, 
this may not be the case, it may be
possible that in some instances, 
the flow $\mg_t$ improves in regularity.
These are interesting open questions beyond
the scope of this paper. 

\subsection{Applications to geometrically singular spaces}
\label{Sec:Sing}

In this subsection, we consider geometric
applications of Theorem \ref{Thm:GMFlow},
particular to spaces with geometric singularities.

As a start, we describe our notion of a 
\emph{geometric conical singularity}. 
For that, let us first describe
the $n$-cone of radius $r$ and height
$h$ by 
$$ \cC(r,h) = \set{ (x,t) \in \R^{n+1}: \modulus{x} 
	= \frac{r}{h}(h - t): t \in [0,h]}.$$
With this notation in hand, we 
define the following. 

\begin{definition}[Geometric conical singularities]
Let $\cM$ be a smooth manifold and $\mg$ a rough metric.
Let $\set{p_1, \dots, p_k} \subset \cM$
and suppose there exists a charts
$(\psi_i, U_i)$ mutually disjoint such that
$\mg \in \Ck{k}(\cM \setminus \union_{i} \close{U}_i)$.
Moreover, suppose for each $i$, there
is a Lipeomorphism
$F_i:U_i \to \cC(r_i, h_i) \subset \R^{n+1}$ 
which improves to a $\Ck{k+1}$ diffeomorphism (for $k \geq 1$)
on $U_i\setminus \set{p_i}$
and that $\mg = \pullb{F}_i\inprod{\mdot,\mdot}_{\R^{n+1}}$
inside $U_i$.
Then, we say that $(\cM,\mg)$ is $\Ck{k}$-geometry
with \emph{geometric
conical singularities} at points $\set{p_1, \dots, \p_k}$.
\end{definition} 

A direct consequence of Theorem \ref{Thm:Main}
is then the following.

\begin{corollary}
Let $(\cM,\mg)$ be a $\Ck{k}$-geometry for $k \geq 1$ with 
geometric conical singularities at $\set{p_1, \dots, p_k}$.
Then, the flow $\mg_t \in \Ck{k-1, 1}(\cM \setminus \set{p_1, \dots, p_k})$ and 
the induced metric coincides everywhere with the  flow of the metrics $d_t$ 
for $\RCD(K,N)$ spaces defined by Gigli-Mantegazza.
\end{corollary}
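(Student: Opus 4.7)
The plan is to deduce the corollary directly from \thmref{Thm:Main} by verifying its two hypotheses in the conical setting, and then upgrading the admissibility assertion to every pair of points.

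\emph{Step 1: verify $\RCD(K,N)$.} Fix small mutually disjoint neighbourhoods $V_i \Subset U_i$ of each $p_i$. On $\cM \setminus \bigcup_i V_i$ the metric $\mg$ is $\Ck{k}$ on a compact set, so the Ricci tensor is uniformly bounded below; this part is classically $\CD$. Inside each $U_i$, the Lipeomorphism $F_i$ realises the metric as the pullback of the ambient Euclidean metric on the round cone $\cC(r_i,h_i) \subset \R^{n+1}$; since this model cone is a convex subset of Euclidean space, it is $\RCD(0,n)$ and in particular infinitesimally Hilbertian. Combining the two pieces through the standard local-to-global criterion for synthetic lower Ricci bounds (in the spirit of Ketterer's cone characterisation), one obtains $(\cM,\met_\mg,\mu_\mg) \in \RCD(K,N)$ for a suitable $K \in \R$ and $N \geq n$.

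\emph{Step 2: apply \thmref{Thm:Main}.} The set $\cS = \set{p_1,\dots,p_k}$ is finite and hence closed, and $\mg \in \Ck{k}(\cM \setminus \cS)$ by hypothesis. \thmref{Thm:Main} therefore produces a family of metric tensors $\mg_t \in \Ck{k-1,1}(\cM \setminus \cS)$ evolving according to \eqref{Def:GM}, and furnishes the identification of the distance induced by $\mg_t$ with the Gigli-Mantegazza distance $\met_t$ for every $\mg_t$-admissible pair.

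\emph{Step 3: every pair is admissible.} Because $\cS$ consists of finitely many points in a manifold of dimension $n \geq 2$ (cone points being meaningful only when $n \geq 2$), any absolutely continuous curve $\gamma$ between two points of $\cM \setminus \cS$ may be perturbed transversally to avoid $\cS$ with arbitrarily small change in $\met_t$-length, the length comparison being controlled by the $C$-closeness provided in \S\ref{Sec:Rough} on compacta bounded away from $\cS$. Hence every such pair is $\mg_t$-admissible, and the distance induced by $\mg_t$ agrees with $\met_t$ throughout $\cM \setminus \cS$. For a pair containing a cone point $p_i$, one approximates $p_i$ by a sequence of non-singular points; the distance $\met_t$ is continuous by construction of the Gigli-Mantegazza flow, and the distance induced by $\mg_t$ is continuous by general principles for length metrics, so the equality extends to \emph{all} pairs, yielding the ``everywhere'' assertion.

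The principal obstacle is Step 1: rigorously gluing the $\RCD(0,n)$ structure of the Euclidean cone model at each $p_i$ with the smooth Riemannian bulk so as to obtain a uniform synthetic lower Ricci bound on the full space $(\cM,\met_\mg,\mu_\mg)$. Steps 2 and 3 are then essentially book-keeping: Step 2 is a direct appeal, and Step 3 is a topological perturbation argument plus continuity of the relevant distance functions, both of which are harmless once the regularity $\Ck{k-1,1}$ on $\cM \setminus \cS$ from \thmref{Thm:Main} is in hand.
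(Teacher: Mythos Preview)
The paper offers no proof beyond declaring the corollary a direct consequence of \thmref{Thm:Main}, so your Step~2 is exactly the paper's argument; the paper simply does not address your Steps~1 and~3 at all.

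That said, both of the steps you add have genuine gaps. In Step~1 there is no general local-to-global theorem for the $\RCD$ condition that lets you patch together a cone neighbourhood with a smooth bulk, and Ketterer's work characterises when a metric cone \emph{over} a given base is $\RCD$, which is a different question from gluing a cone tip into an ambient manifold. A viable route would be to approximate $(\cM,\mg)$ by smooth metrics with a uniform lower Ricci bound (rounding the tips, exploiting that the cone angle here is strictly below $2\pi$) and invoke mGH-stability of $\RCD$; this is real work that neither you nor the paper carry out. In Step~3 your transversal perturbation must be controlled in $\met_t$-length, but the perturbation takes place precisely near the $p_i$, where $\mg_t$ is undefined and the $C$-closeness of \S\ref{Sec:Rough} (which concerns $\mg$, not $\mg_t$ or $\met_t$) says nothing. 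A cleaner argument avoids geometry near $\cS$ entirely: since $\cS$ is finite, for any absolutely continuous $\gamma$ the metric speed $\modulus{\dot\gamma}_{\met_t}$ vanishes a.e.\ on $\set{s:\gamma(s)\in\cS}$, so excising that set and reparametrising linearly produces a curve $\gamma'$ lying in $\cN$ for a.e.\ $s$ with no larger energy. This yields admissibility for all pairs in $\cM\setminus\cS$ without any control of $\met_t$ near the cone points; your continuity extension to the $p_i$ themselves is then fine.
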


Moreover, we are able to flow the sphere with a
conical pole. See \S\ref{Sec:Examples} for the construction and proof.

\begin{corollary}[Witch's hat sphere]
\label{Cor:Witch}
Let $(\Sph^n, \mg_{\witch})$ be the sphere with 
a cone attached at the north pole.
Then, the flow $\mg_t \in \Ck{\infty}$ at least on the region away
from the north pole and
the induced metric agrees with the flow of the metrics $d_t$ 
for $\RCD(K,N)$ spaces defined by Gigli-Mantegazza.
\end{corollary}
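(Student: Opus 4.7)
The plan is to realise Corollary \ref{Cor:Witch} as a direct application of the previous corollary on $\Ck{k}$-geometries with geometric conical singularities, specialised to $k=\infty$. I will need to verify two things: (a) $(\Sph^n, \mg_{\witch})$ is a smooth compact manifold with a rough metric exhibiting exactly one geometric conical singularity at the north pole $p$; and (b) the induced metric measure space $(\Sph^n, \met_{\mg_{\witch}}, \mu_{\mg_{\witch}})$ belongs to $\RCD(K,N)$ for some $K\in\R$ and $N>0$.

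For (a), I would equip $\Sph^n$ with its standard smooth structure and define $\mg_{\witch}$ to be the round metric outside a small chart $U$ around $p$, and the pullback $\pullb{F}\inprod{\cdot,\cdot}_{\R^{n+1}}$ of the flat Euclidean metric on the cone $\cC(r,h)$ under a Lipeomorphism $F:U\to\cC(r,h)$ that sends $p$ to the apex and restricts to a $\Ck{\infty}$ diffeomorphism on $U\setminus\set{p}$. A smooth interpolation along the transition annulus would ensure that $\mg_{\witch}$ is a rough metric on $\Sph^n$ and a $\Ck{\infty}$ Riemannian metric on $\Sph^n\setminus\set{p}$, matching the definition of a $\Ck{\infty}$-geometry with a single geometric conical singularity at $p$.

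Step (b) is where I expect the main obstacle to lie. The idea is to combine the local cone structure near $p$ with the smooth positively-curved cap elsewhere. The Euclidean cone $\cC(r,h)$ sits inside $\R^{n+1}$, hence is trivially $\RCD(0,n+1)$; more generally, Ketterer's theorem characterises Euclidean cones over $\RCD^{\ast}(n-2,n-1)$ spaces as $\RCD(0,n)$. On the smooth portion, the round sphere has Ricci uniformly bounded below. To combine these facts, I would either invoke stability of $\RCD$ under measured Gromov--Hausdorff limits applied to smooth $\epsilon$-mollifications of the cone tip with uniform Ricci lower bounds, or check the curvature-dimension inequality along optimal transport plans together with infinitesimal Hilbertianity of the Cheeger energy (which is inherited locally from the smooth manifold structure away from $p$ and from the cone structure near $p$). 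Either route should produce the required $K$ and $N$.

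With (a) and (b) in place, the previous corollary directly yields $\mg_t\in\Ck{\infty}$ on $\Sph^n\setminus\set{p}$ together with equality of the induced distance with the Gigli--Mantegazza $\RCD(K,N)$ flow $\met_t$ at every $\mg_t$-admissible pair. Since the singular set is the single point $\set{p}$ of codimension $n\geq 1$, any absolutely continuous curve joining two regular points $x,y$ can be perturbed to avoid $p$ without increasing its $\met_t$-length, so every such pair is $\mg_t$-admissible, completing the proof of the corollary.
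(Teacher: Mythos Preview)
Your overall strategy matches the paper's: verify that $(\Sph^n,\mg_{\witch})$ carries a $\Ck{\infty}$ metric away from the pole and that the induced metric measure space is $\RCD(K,N)$, then invoke Theorem~\ref{Thm:Main} (via the corollary on geometric conical singularities). Part (a) is essentially the paper's construction, though the paper writes $\mg_{\witch}$ explicitly as a warped product $dr^2+f(r)^2\mg_{\Sph^{n-1}}$ with a concrete interpolating profile $f$.

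The substantive difference is step (b). The paper does not use Ketterer's cone theorem or mGH stability. Instead it decomposes the witch's hat into three warped-product pieces $A_1,A_2,A_3$ (a round spherical cap, a smooth transition annulus, and a standard cone of cross-sectional diameter $<\pi$), observes that each is an Alexandrov space with a common lower sectional-curvature bound, and applies Petrunin's gluing theorem (Proposition~\ref{prop:petrunin-gluing}) together with the fact that Alexandrov spaces are $\RCD$. This is short and self-contained.

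Both of your proposed routes for (b) have gaps as stated. Ketterer's theorem characterises when a \emph{full} metric cone $C(X)$ is $\RCD$; it says nothing directly about a truncated cone glued to a spherical cap, so you would still need a gluing or globalisation step---which is exactly what Petrunin's theorem supplies in the Alexandrov category but which is not available in general for $\RCD$ spaces. The mGH-stability route is in principle viable, but it requires exhibiting smooth approximations of $\mg_{\witch}$ with a \emph{uniform} Ricci lower bound; smoothing a cone tip while controlling Ricci from below is not automatic in dimension $\geq 3$ and would itself need an argument. As written, neither route is carried to completion, so step (b) remains open in your proposal.

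Your final remark on $\mg_t$-admissibility---that any curve between regular points can be perturbed off the single point $p$ without increasing $\met_t$-length---is a correct observation that the paper asserts (``induced everywhere'') without spelling out.
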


Also, as a consequence of Theorem \ref{Thm:Main}, we
are able to consider the $n$-dimensional box
in Euclidean space. Again, its proof is
contained in \S\ref{Sec:Examples}.

\begin{corollary}
\label{Cor:Box}
Let $(B,\mg)$ be an $n$-box.
Then, the flow $\mg_t \in \Ck{\infty}(\cM\setminus \cS)$,
where $\cS$ is the set of edges and
corners, induces the same distance
as $\met_t$, the $\RCD(K,N)$ Gigli-Mantegazza
flow, for $\mg_t$-admissible points.
\end{corollary}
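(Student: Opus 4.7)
The plan is to realize the $n$-box as a compact smooth manifold carrying a rough metric satisfying the hypotheses of \thmref{Thm:Main}, and then to invoke that theorem together with the regularity result \thmref{Thm:GMFlowReg}.

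First, I would double the box $B = [0,1]^n$ across its topological boundary to obtain a compact topological sphere, which I endow with the standard smooth differential structure of $\Sph^n$; call this manifold $\cM$. The two copies of the Euclidean metric on the open $n$-dimensional faces glue to a rough metric $\mg$ in the sense of \secref{Sec:Rough}: locally near points in the relative interior of a top-dimensional face, $\mg$ is comparable (in fact equal, up to reflection) to the flat background, so any smooth metric on $\Sph^n$ serves as the close reference in Proposition~\ref{Prop:OpExist}. The lower-dimensional doubled strata form a closed set $\cS$, the set of edges and corners, and on $\cM \setminus \cS$ the metric $\mg$ is flat Euclidean and hence $\Ck{\infty}$. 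By Proposition~\ref{Prop:MeasRep}, $\mu_\mg$ is the natural $n$-dimensional Lebesgue-type measure, and the induced length structure $\met_\mg$ is the expected piecewise-Euclidean distance coming from gluing the two copies of $B$ along $\partial B$.

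Next I would verify the $\RCD(K,N)$ hypothesis. The doubled cube is a classical non-negatively curved Alexandrov space: this is Perelman's doubling theorem applied to the manifestly non-negatively curved flat box with totally geodesic boundary. Combined with the by-now-standard chain of results identifying finite-dimensional Alexandrov spaces of curvature $\geq 0$ with $\RCD(0,n)$ spaces (via the infinitesimal Hilbertianity of Alexandrov spaces and the upgrade from $\CD$ to $\RCD$), we conclude $(\cM,\met_\mg,\mu_\mg) \in \RCD(0,n)$. Since $\cS\neq\cM$ is closed and $\mg \in \Ck{\infty}(\cM \setminus \cS)$, \thmref{Thm:Main} produces, for every $t>0$, a metric tensor $\mg_t$ on $\cM\setminus\cS$ evolving by \eqref{Def:GM} whose induced distance coincides with $\met_t$ on $\mg_t$-admissible pairs. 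To obtain the $\Ck{\infty}$ regularity I would iterate \thmref{Thm:GMFlowReg} for every $k\geq 1$, using that the heat kernel is $\Ck{\infty}$ on $\cN^2 = (\cM\setminus\cS)^2$ by the flat-interior Schauder argument from \secref{Sec:HKRegMet}.

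The main obstacle is the $\RCD$ step: one must ensure that the doubled box is not merely a $\CD$ space but infinitesimally Hilbertian. I would handle this by citing the standard results on $\RCD$ structure of Alexandrov spaces, rather than performing any direct optimal-transport computation on the box itself; all remaining ingredients are routine given Theorems~\ref{Thm:Main} and~\ref{Thm:GMFlowReg}.
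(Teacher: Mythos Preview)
Your overall strategy---realize the object as a rough metric on $\Sph^n$, obtain the $\RCD(0,n)$ condition via Alexandrov gluing, then apply \thmref{Thm:Main}---matches the paper's. However, you have misidentified the object. The $n$-box in the corollary is the \emph{boundary} $B^n = \partial[-c,c]^{n+1}$ of an $(n+1)$-dimensional cube sitting in $\R^{n+1}$, not the double of the solid cube $[0,1]^n$. The paper realizes $B^n$ as a rough metric on $\Sph^n$ via the radial projection $G(x) = x/\modulus{x}$, which it checks is bi-Lipschitz; the flat faces pull back to a metric that is smooth away from the edges and corners. The $\RCD(0,n)$ condition is then obtained, essentially as you suggest, from Petrunin's gluing of the flat faces along their boundaries together with the Alexandrov $\Rightarrow \RCD$ implication (Proposition~\ref{prop:petrunin-gluing}). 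Your doubled $[0,1]^n$ is a different metric space (a ``pillow'' with two faces rather than $2(n+1)$), though the same machinery would handle it as well.

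One minor point: your appeal to \thmref{Thm:GMFlowReg} for the $\Ck{\infty}$ conclusion is both unnecessary and not quite justified, since that theorem requires the heat kernel to be globally $\Ck{1}(\cM^2)$, whereas the $\RCD$ hypothesis only provides global Lipschitz regularity. This is harmless: \thmref{Thm:Main} already gives $\mg_t \in \Ck{k-1,1}(\cM \setminus \cS)$ for every $k$, hence $\mg_t \in \Ck{\infty}(\cM \setminus \cS)$ directly.
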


We remark that a shortcoming of our analysis of
the flow for the box is that we have
not classified the $\mg_t$-admissible points.
We do not expect this analysis to be
straightforward since it involves understanding
whether the flat pieces are preserved in some
way under $\met_t$. However, since 
we have supplied a metric $\mg_t$ away from 
a set of measure zero, we expect it to be
possible to induce $\met_t$ via this
metric on a very large part of the box.
\section{Elliptic problems and regularity}
\label{Sec:ER}

Throughout this section, 
let us fix $\emptyset \neq \cN \subset \cM$ 
to be an  open  subset  of $\cM$.
To study the flow of Gigli-Mantegazza, 
we study a slightly more general 
elliptic problem
than \eqref{Def:E}.
Let $\omega \in \Ck{0,1}(\cM^2)$
be a function satisfying $\omega(x,y) > 0$
for all $x,y \in \cM$. 
Moreover,  let $x \mapsto \omega(x,\mdot) \in \Ck{k}(\cN)$
where $k \geq 1$.
For convenience, 
we write $\omega(x,\mdot) = \omega_x$.
Then, for $\eta \in \Lp{2}(\cM)$
we want to solve for $\phi \in \Sob{1,2}(\cM)$
satisfying the equation
\begin{equation} 
\tag{F}
\label{Def:F}
-\divv_\mg \omega_x \conn \phi = \eta.
\end{equation}

In this section, we establish existence, uniqueness
and regularity (in $x$)
for this equation.

\subsection{$\Lp{\infty}$-coefficient divergence form operators on smooth metrics}
\label{Sec:Linf}

Due to the lack of regularity  of an  arbitrary 
rough metric, we are forced to 
solve the the problem \eqref{Def:F} via
perturbation  to a smooth metric.  Indeed, 
this is not a small perturbation result as  
we are not guaranteed the existence of arbitrarily 
close smooth metrics to a rough metric.
Instead, we will have to contend
ourselves to studying elliptic PDE
of the form $-\divv A \conn$, where
$A$ are only bounded measurable coefficients
defining an elliptic problem.

More precisely, 
throughout this subsection, we fix $\cM$ to be a smooth compact manifold and $\mgt$
to be a smooth metric. 
Let $A \in \Lp{\infty}(\Tensors[1,1]\cM) = \Lp{\infty}(\bddlf(\Tensors[1,0]\cM))$
be real-symmetric. That is, for almost-every $x$, in coordinates, 
$A(x)$ can be written as a symmetric matrix with real coefficients.
Further, we assume that there exists $\kappa > 0$
such that $\inprod{Au,u} \geq \kappa \norm{u}^2$. 
That is, $A$ is bounded below. 
Moreover, we quantify 
the $\Lp{\infty}$ bound for
$A$ via assuming there exists $\Lambda > 0$ 
satisfying
$\inprod{Au,u} = \norm{\sqrt{A}}^2  \leq \Lambda$

Let $J_A:\Sob{1,2}(\cM) \times \Sob{1,2}(\cM) \to \R_+$ be given by 
$$ J_A[u,v] = \inprod{A\conn u, \conn v} 
	= \int_{\cM} \mgt_x(A(x)\conn u(x), \conn v(x))\ d\mu_\mgt(x).$$
Then, by the lower bound on $A$, we obtain that
$J_A[u,u] \geq \kappa{ \norm{\conn u}^2}$.
The Lax-Milgram theorem then yields a unique,  closed, densely-defined
self-adjoint operator $\Div_A$ with domain $\dom(\Div_A) \subset \Sob{1,2}(\cM)$
such that
$ J_A[u,v] =  \inprod{\Div_A u, v} $
for $u \in \dom(\Div_A)$ and $v \in \Sob{1,2}(\cM)$.
Moreover, since the form $J_A$ is real-symmetric
due to the fact that $A$ are real-symmetric coefficients, this theorem
further yields that $\dom(\sqrt{\Div_A}) = \Sob{1,2}(\cM)$
and $J_A[u,v] = \inprod{\sqrt{\Div_A}u, \sqrt{\Div_A}v}.$
The uniqueness then allows us to assert that
$\Div_A = -\divv_{\mgt} A \conn$.

For the remainder
of this section, we rely on some facts from 
the spectral theory of sectorial and, more particularly, 
self-adjoint operators.
We refer the reader to \cite{Kato} by Kato, \cite{ADM}
by Albrecht, Duong and McIntosh,  \cite{GT} 
by Gilbarg and Trudinger and 
\cite{CDMcY} by Cowling, Doust, McIntosh and Yagi for a more detailed
exposition on the connection between PDE and
spectral theory.

As a first, we establish a spectral splitting for $\Div_A$.
We recall that for $u \in \Lp[loc]{1}(\cM)$, 
 $u_{B} = \fint_{B} u\ d\mu_\mgt$ for $B \subset \cM$ a Borel set. 

\begin{proposition}
The space $\Lp{2}(\cM) = \nul(\Div_A) \oplus^{\perp} \close{\ran(\Div_A)}$
and the operator  $\Div_A$ restricted to either $\nul(\Div_A)$ or
$\close{\ran(\Div_A)}$ preserves each space respectively.
Moreover, $\nul(\Div_A) = \nul(\conn)$ and
$\close{\ran(\Div_A)} = \set{u \in \Sob{1,2}(\cM): \int_{M} u\ d\mu_{\mgt} = 0}.$
\end{proposition}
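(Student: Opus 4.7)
The plan is to exploit the self-adjointness of $\Div_A$ to derive the splitting, then use the ellipticity of $J_A$ to identify the null space, and finally dualize to characterize the closure of the range.

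First, I would invoke the general fact from spectral theory that any closed, densely-defined self-adjoint operator $T$ on a Hilbert space satisfies $\Hil = \nul(T) \oplus^\perp \close{\ran(T)}$ (since $\nul(T) = \ran(T)^\perp$ by self-adjointness). Applied to $\Div_A$, this yields the splitting $\Lp{2}(\cM) = \nul(\Div_A) \oplus^\perp \close{\ran(\Div_A)}$. Preservation of the summands is then automatic: the kernel is mapped to zero, and for $u \in \dom(\Div_A) \intersect \close{\ran(\Div_A)}$, any $v \in \nul(\Div_A)$ satisfies $\inprod{\Div_A u, v} = \inprod{u, \Div_A v} = 0$, hence $\Div_A u \in \close{\ran(\Div_A)}$.

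Next, I would identify $\nul(\Div_A)$ with $\nul(\conn)$. The inclusion $\nul(\conn) \subset \nul(\Div_A)$ is immediate from the form: if $\conn u = 0$, then $J_A[u,v] = 0$ for every $v \in \Sob{1,2}(\cM)$, so $u \in \dom(\Div_A)$ with $\Div_A u = 0$. Conversely, if $\Div_A u = 0$, then using $\dom(\sqrt{\Div_A}) = \Sob{1,2}(\cM)$ and the lower bound on $A$,
\[ 0 = \inprod{\Div_A u, u} = J_A[u,u] \geq \kappa \norm{\conn u}^2, \]
so $\conn u = 0$.

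For the range characterization, since $\cM$ is compact and connected, $\nul(\conn) \subset \Sob{1,2}(\cM)$ consists precisely of the constant functions. Hence
\[ \close{\ran(\Div_A)} = \nul(\Div_A)^\perp = \set{u \in \Lp{2}(\cM): \inprod{u, \One} = 0} = \set{u \in \Lp{2}(\cM): \int_\cM u\, d\mu_\mgt = 0}, \]
which I take to be the intended meaning of the displayed right-hand side (the $\Sob{1,2}(\cM)$ membership stated appears to be a typographical artefact, since the closure is computed in $\Lp{2}(\cM)$). No step here presents a serious obstacle; the only mild subtlety is keeping track of where the form $J_A$ is defined versus where $\Div_A$ itself acts, which is handled cleanly by the identity $\dom(\sqrt{\Div_A}) = \Sob{1,2}(\cM)$ recorded just above the proposition.
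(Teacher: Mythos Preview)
Your argument is correct and follows the same overall skeleton as the paper's: self-adjointness gives the orthogonal splitting, the form inequality identifies the kernel, and orthogonality to constants identifies the range. The execution differs in two places. For $\nul(\Div_A) = \nul(\conn)$, the paper routes through $\sqrt{\Div_A}$, first showing $\nul(\Div_A) = \nul(\sqrt{\Div_A})$ and then using $\kappa\norm{\conn u}^2 \leq \norm{\sqrt{\Div_A}u}^2 \leq \Lambda\norm{\conn u}^2$; your one-line computation $0 = \inprod{\Div_A u, u} = J_A[u,u] \geq \kappa\norm{\conn u}^2$ is more direct and avoids the square-root detour entirely. For the range, the paper argues the inclusion $Z \subset \close{\ran(\Div_A)}$ via the \Poincare inequality and the reverse inclusion by passing to a limit $u = \lim \Div_A v_n$ and pairing against the constant $1$; your approach of simply writing $\close{\ran(\Div_A)} = \nul(\Div_A)^\perp = (\text{constants})^\perp$ is again shorter and uses nothing beyond what has already been established. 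Your observation about the $\Sob{1,2}(\cM)$ in the displayed formula is also well taken: the paper itself works with $Z = \set{u \in \Lp{2}(\cM): \int_\cM u\, d\mu_{\mgt} = 0}$ in the body of its proof.
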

\begin{proof}
The fact that the operator splits the space orthogonally to 
$\nul(\Div_A)$ and $\close{\ran(\Div_A)}$, and that its restriction
to each of these spaces preserves the respective space
is a direct consequence of the fact that $\Div_A$ is self-adjoint.
 See Theorem 3.8 in \cite{CDMcY}.

First, let us apply this same argument to the operator
$\sqrt{\Div_A}$, so that
we obtain the splitting 
$\Lp{2}(\cM) = \nul(\sqrt{\Div_A}) \oplus^\perp \close{\ran(\sqrt{\Div_A})}$.
Now, fix $u \in \nul(\sqrt{\Div_A})$. Then, $\sqrt{\Div_A}u = 0$
which implies that $u \in \dom(\Div_A)$ and $\Div_A u = \sqrt{\Div_A} (\sqrt{\Div_A}u) = 0$.
Hence, $u \in \nul(\Div_A)$.
For the reverse inclusion, suppose that $0 \neq u \in \nul(\Div_A)$.
Then, $\sqrt{\Div_A} \sqrt{\Div_A} u = 0$. That is, 
$\sqrt{\Div_A}u \in \nul(\sqrt{\Div_A})$. 
But since $\sqrt{\Div_A}$ preserves $\nul(\sqrt{\Div_A})$,
$u \in \nul(\sqrt{\Div_A})$. Thus, $\nul(\Div_A)= \nul(\sqrt{\Div_A})$.
Now, since we have that for all $u \in \Sob{1,2}(\cM)$,
 $\kappa \norm{\conn u}^2 \leq \norm{\sqrt{\Div_A}u}^2 \leq \Lambda \norm{\conn u}^2$,
we obtain that $\nul(\conn) = \nul(\sqrt{\Div_A})$
and hence, $\nul(\conn) = \nul(\Div_A)$.

Now, recall that $(\cM,\mgt)$ admits a \Poincare
inequality: there exists $C > 0$ such that
$$ \norm{u - u_{\cM}} \leq C \norm{\conn u},$$
for every $u \in \Sob{1,2}(\cM)$.
Note then that, if $u \in \Sob{1,2}(\cM)$ 
and $\int_{\cM} u\ d\mu_{\mgt} = 0$, then 
$\norm{u} \leq C \norm{\conn u}$.
Thus, if we further assume that $u \in \nul(\conn)$
then we obtain that $u = 0$.
On letting 
$Z = \set{u \in \Lp{2}(\cM): \int_{\cM} u\ d\mu_\mgt = 0}$, 
that is precisely 
$$\set{0} = \nul(\Div_A) \intersect \Sob{1,2}(\cM) \intersect Z
	= \nul(\Div_A) \intersect Z,$$
since $\nul(\Div_A) \subset \Sob{1,2}(\cV)$. 
Therefore, $Z \subset \nul(\Div_A)^{\perp} = \close{\ran(\Div_A)}$.
For the reverse inclusion, let $u \in \close{\ran(\Div_A)}$.
Then, there exists a sequence $v_n \in \dom(\Div_A)$
such that $u = \lim_{n \to \infty} \Div_A v_n$ in $\Lp{2}(\cM)$.
Then,
\begin{multline*} 
\int_{\cM} u\ d\mu_{\mgt} 
	= \inprod{u, 1}
	= \lim_{n \to \infty} \inprod{\Div_A v_n, 1} \\
	= \lim_{n \to \infty} \inprod{-\divv_\mgt A \conn v_n, 1}
	= \lim_{n \to \infty} \inprod{A \conn v_n, \conn (1)}
	= 0.
\end{multline*}
\end{proof}

Next, we note that 
since $\Div_A$ preserves the spaces $\nul(\Div_A)$
and $\close{\ran(\Div_A)}$, we obtain that the 
restricted operator
$$\Div_A^R = \Div_A\rest{\close{\ran(\Div_A})}: 
	\close{\ran(\Div_A)} \to \close{\ran(\Div_A)},$$
and 
$$J_A^R = J_A\rest{\close{\ran(\Div_A)} \intersect \Sob{1,2}(\cM)},$$
with $\dom(J_A^R) = \Sob{1,2}(\cM) \intersect \close{\ran(\Div_A)}$.

\begin{proposition}
\label{Prop:Div_A^RDom}
The operator $\Div_A^R$ is a closed, densely-defined operator
with associated form $J_A^R$. 
\end{proposition}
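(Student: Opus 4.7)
The plan is to verify closedness, density of the domain, and identification of $J_A^R$ as the associated form in turn, relying on the spectral splitting from the preceding proposition together with the Poincaré inequality on the smooth compact manifold $(\cM,\mgt)$.

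For closedness, I would observe that the graph of $\Div_A^R$ equals $\mathrm{graph}(\Div_A) \cap \bigl(\close{\ran(\Div_A)} \times \close{\ran(\Div_A)}\bigr)$, an intersection of closed sets in $\Lp{2}(\cM) \oplus \Lp{2}(\cM)$, hence closed. For density of $\dom(\Div_A^R) = \dom(\Div_A) \cap \close{\ran(\Div_A)}$ in $\close{\ran(\Div_A)}$, let $P$ denote the orthogonal projection onto $\close{\ran(\Div_A)}$; by the preceding proposition's identification of this subspace with the mean-zero functions, $Pu = u - u_{\cM}$. Since constants lie in $\nul(\Div_A) \subset \dom(\Div_A)$, $P$ maps $\dom(\Div_A)$ into itself. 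The density of $\dom(\Div_A)$ in $\Lp{2}(\cM)$, combined with continuity of $P$, then yields density of $P(\dom(\Div_A)) = \dom(\Div_A^R)$ in $\close{\ran(\Div_A)}$.

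For the form-operator correspondence, the essential input is to show that $J_A^R$, which is real-symmetric and $\Sob{1,2}$-continuous by inheritance from $J_A$, is coercive on $\dom(J_A^R) = \Sob{1,2}(\cM) \cap \close{\ran(\Div_A)}$ regarded as a dense subspace of the ambient Hilbert space $\close{\ran(\Div_A)}$. For $u \in \dom(J_A^R)$, the vanishing of $\int_\cM u\ d\mu_\mgt$ together with the Poincaré inequality used in the previous proposition gives $\norm{u}_{\Lp{2}}^2 \leq C \norm{\conn u}_{\Lp{2}}^2$, so that
\begin{equation*}
J_A^R[u,u] \geq \kappa \norm{\conn u}_{\Lp{2}}^2 \geq \frac{\kappa}{1+C} \norm{u}_{\Sob{1,2}}^2.
\end{equation*}
Kato's form representation theorem then produces a unique closed, densely-defined self-adjoint operator on $\close{\ran(\Div_A)}$ whose form is $J_A^R$; the identity $\inprod{\Div_A^R u, v} = \inprod{\Div_A u, v} = J_A[u,v] = J_A^R[u,v]$ for $u \in \dom(\Div_A^R)$ and $v \in \dom(J_A^R)$ then forces this operator to be $\Div_A^R$.

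The main subtlety is the coercivity estimate: on $\Sob{1,2}(\cM)$ itself the form $J_A$ is only G\aa rding-type rather than coercive, since constants lie in its kernel; it is precisely the passage to the mean-zero subspace via the Poincaré inequality that restores an honest $\Sob{1,2}$-coercivity. The remaining closedness and density claims are essentially bookkeeping consequences of the self-adjoint reduction encoded by the spectral splitting $\Lp{2}(\cM) = \nul(\Div_A) \oplus^\perp \close{\ran(\Div_A)}$.
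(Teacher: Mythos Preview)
Your argument is correct in spirit and takes a genuinely different route from the paper. The paper never proves closedness and density of $\Div_A^R$ directly; instead it defines an auxiliary operator $D_A^R$ from the form $J_A^R$ via Lax--Milgram, and then shows the domain equality $\dom(D_A^R) = \dom(\Div_A) \cap \close{\ran(\Div_A)}$ by exploiting that $\ran(\Div_A)$ is dense in $\Sob{1,2}(\cM) = \dom(\sqrt{\Div_A})$ (a functional-calculus fact). Your approach is more elementary: closedness and density of $\Div_A^R$ are handled by abstract reducing-subspace bookkeeping, and the coercivity of $J_A^R$ is made explicit via the Poincar\'e inequality before invoking a form representation theorem. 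This avoids the paper's appeal to density of the range in the form domain.

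There is, however, a small gap in your final identification step. The identity $\inprod{\Div_A^R u, v} = J_A^R[u,v]$ for $u \in \dom(\Div_A^R)$, $v \in \dom(J_A^R)$ only yields the inclusion $\Div_A^R \subset T$, where $T$ is the operator produced by Kato's theorem; it does not by itself force equality. You can close the gap in one of two ways. Either observe that $\close{\ran(\Div_A)}$ is a reducing subspace for the self-adjoint $\Div_A$ (this is exactly the content of the preceding proposition), so $\Div_A^R$ is itself self-adjoint and hence admits no proper self-adjoint extension. Or argue the reverse domain inclusion directly: if $u \in \dom(T)$ then $v \mapsto \inprod{A\conn u, \conn v}$ is continuous on $\dom(J_A^R)$, and for arbitrary $v \in \Sob{1,2}(\cM)$ one decomposes $v = (v - v_\cM) + v_\cM$ and uses $\conn v_\cM = 0$ to extend continuity to all of $\Sob{1,2}(\cM)$, giving $u \in \dom(\Div_A)$.
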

\begin{proof}
By definition, we obtain that $\dom(\Div_A^R) = \dom(\Div_A) \intersect \close{\ran(\Div_A)}$.
Now, let $D_A^R$ be the operator given via the form 
$J_A^R$. We note that $J_A^R$ is both densely-defined
and closed.
Now, note that 
$$\dom(D_A^R) = \set{ u \in \close{\ran(\Div_A)}: 
	\Sob{1,2}(\cM) \intersect \ran(\Div_A) \ni v \mapsto J_A^R[u,v]\ \text{is continuous} }.$$
It is easy to see that $\dom(\Div_A) \intersect \close{\ran(\Div_A)} \subset \dom(D_A^R)$.

For the reverse inclusion, assume that $ u\in \dom(D_A^R)$, 
and $v \in \ran(\Div_A) \intersect \Sob{1,2}(\cM)$
so that $v\mapsto \inprod{A\conn u, \conn v}$ is continuous.
Now, we have that $\ran(\Div_A)$ is dense in $\Lp{2}(\cM)$
as well as in $\Sob{1,2}(\cM) = \dom(\sqrt{\Div_A})$ (by a
functional calculus argument). 
Thus, this continuity is valid for every $v \in \Sob{1,2}(\cM)$
and hence, $u \in \dom(\Div_A)$. Since 
we have by assumption that $u \in \close{\ran(\Div_A)}$,
we have that $\dom(D_A^R) \subset \dom(\Div_A) \intersect \close{\ran(\Div_A)}$.

It is easy to see that $\Div_A^R = D_A^R$ 
which is a closed, densely-defined operator 
by the Lax-Milgram theorem.
\end{proof}

We compute the spectrum of $\Div_A$ via the spectrum
for $\Div_A^R$.

\begin{proposition}
The spectra of the operators $\Div_A$ and $\Div_A^R$ relate 
by: 
$$\spec(\Div_A) = \set{0} \union \spec(\Div_A^R)
	\subset \set{0} \union [\kappa\lambda_1(\cM,\mgt), \infty).$$
\end{proposition}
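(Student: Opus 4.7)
The plan is to exploit the orthogonal splitting from the previous proposition and then use ellipticity plus the Poincaré inequality to locate the non-zero spectrum.

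First, I would establish the equality $\spec(\Div_A) = \{0\} \cup \spec(\Div_A^R)$. By the previous proposition, $\Lp{2}(\cM) = \nul(\Div_A) \oplus^{\perp} \close{\ran(\Div_A)}$, and $\Div_A$ leaves each summand invariant. On $\nul(\Div_A)$ (which is non-trivial since it contains the constants on the compact manifold) the operator acts as zero, so contributes $\{0\}$ to the spectrum; on $\close{\ran(\Div_A)}$ it acts as $\Div_A^R$. Since both pieces are closed invariant subspaces of a self-adjoint operator, resolvents decompose accordingly and the spectrum of the direct sum is the union of the spectra of the parts, giving the first identity.

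Next, I would obtain the lower bound $\spec(\Div_A^R) \subset [\kappa \lambda_1(\cM,\mgt), \infty)$. For $u \in \dom(\Div_A^R) \subset \close{\ran(\Div_A)}$, the previous proposition shows $\int_\cM u\, d\mu_\mgt = 0$, so the $\mgt$-Poincaré inequality yields
\begin{equation*}
\lambda_1(\cM,\mgt) \norm{u}^2 \leq \norm{\conn u}^2.
\end{equation*}
Combining this with the ellipticity bound $J_A[u,u] \geq \kappa \norm{\conn u}^2$ from \S\ref{Sec:Linf}, we get
\begin{equation*}
\inprod{\Div_A^R u, u} = J_A^R[u,u] \geq \kappa \lambda_1(\cM,\mgt) \norm{u}^2.
\end{equation*}
Since $\Div_A^R$ is self-adjoint (by Proposition~\ref{Prop:Div_A^RDom} and the symmetry of $J_A^R$), its numerical range coincides with the convex hull of the spectrum, forcing $\spec(\Div_A^R) \subset [\kappa \lambda_1(\cM,\mgt), \infty)$.

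The main obstacle, such as it is, is ensuring that $\Div_A^R$ is genuinely self-adjoint and not merely symmetric and bounded below, so that the numerical range argument actually pins down the spectrum; this is handled cleanly by invoking Proposition~\ref{Prop:Div_A^RDom}, which identifies $\Div_A^R$ as the operator associated via Lax–Milgram to the closed symmetric form $J_A^R$ on the Hilbert space $\close{\ran(\Div_A)}$. Everything else is a routine assembly of the orthogonal splitting, the Poincaré inequality on the compact smooth manifold $(\cM,\mgt)$, and the $\Lp{\infty}$ ellipticity of $A$.
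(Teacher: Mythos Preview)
Your proof is correct and follows essentially the same approach as the paper: orthogonal splitting for the spectral equality, then ellipticity plus the \Poincare inequality and the numerical-range inclusion for the lower bound. The only difference is that you invoke the general fact that the spectrum of a self-adjoint operator on an orthogonal direct sum is the union of the spectra of its restrictions, whereas the paper verifies this by constructing the resolvent explicitly on each summand.
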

\begin{proof}
Fix $\zeta \neq 0$ and $\zeta \in \rset(\Div_A^R)$,
That is, 
$\zeta - \Div_A^R: \dom(\L_A^R) = \close{\ran(\Div_A)} \intersect \dom(\Div_A)
	\to \close{\ran(\Div_A^R)}$
is invertible.
Thus, for any $u \in \close{\ran(\Div_A)} \intersect \dom(\Div_A)$,
there exists a $v \in \close{\ran(\Div_A)}$
such that $u = (\zeta - \Div_A^R)^{-1}v$ or
equivalently, $v = (\zeta - \Div_A^R)u$.
But we have that 
$\Div_A^R = \Div_A\rest{\close{\ran(\Div_A)} \intersect \dom(\Div_A)}$
and therefore, we obtain that
$v = (\zeta - \Div_A)u$.
That is, $u = (\zeta - \Div_A^R)^{-1}(\zeta - \Div_A)u$.
From this, we also obtain that $v = (\zeta - \Div_A)u = (\zeta - \Div_A)(\zeta - \Div_A^R)^{-1}v$.
That is, on $\close{\ran(\Div_A)}$, 
$(\zeta - \Div_A)^{-1} = (\zeta - \Div_A^R)^{-1}$.

Now, fix $u \in \nul(\Div_A)$. Then,
we have that $(\zeta - \Div_A)u = \zeta u$.
Since we assume $\zeta \neq 0$, we obtain that
$(\zeta - \Div_A^{-1})u = \zeta^{-1} u$.
This proves that 
$\rset(\Div_A^R)\setminus\set{0} \subset \rset(\Div_A)$
or $\spec(\Div_A) \subset \spec \rset{\Div_A^R} \union \set{0}$

Now, suppose that $\zeta \in \rset(\Div_A)$.
Then, for $u \in \close{\ran(\Div_A)}$ there exists
a $v \in \close{\ran(\Div_A)} \intersect \dom(\Div_A)$
such that $u =(\zeta - \Div_A)v$. But 
$(\zeta - \Div_A)v = (\zeta - \Div_A^R)v$. 
By the invertibility of $(\zeta - \Div_A)$ we
obtain the invertibility of $(\zeta - \Div_A^R)$.
Thus, $\rset(\Div_A) \subset \rset(\Div_A^R)$
and $\spec(\Div_A^R) \subset \spec(\Div_A)$.
Since we already know that $0 \in \spec(\Div_A)$,
we obtain that 
$\spec(\Div_A^R) \union \set{0} \subset \spec(\Div_A)$.

Next, note that since $\Div_A^R$
is self-adjoint,
$\spec(\Div_A^R) \subset \close{\nr(\Div_A^R)}$
where 
$$\nr(\Div_A^R) = \set{\inprod{\Div_A u, u}: u \in\dom(\Div_A^R),\ \norm{u} = 1},$$
is the numerical-range of $\Div_A^R$.
Moreover, via the \Poincare inequality,
we obtain that
$$J_A^R[u,u] \geq \kappa \norm{\conn u}^2 \geq\kappa \lambda_1(\cM,\mgt) \norm{u}^2$$
for $u \in \close{\ran(\Div_A)} \intersect \Sob{1,2}(\cM)$, and where
$\lambda_1(\cM,\mgt)$ is the first nonzero eigenvalue
for the smooth Laplacian $\Lap_{\mgt}$.
This shows that $\rset(\Div_A^R) \subset \C\setminus [\kappa \lambda_1(\cM,\mgt), \infty)$.
\end{proof}

Moreover,
we obtain that the operator $\Div_A$
has discrete spectrum.

\begin{proposition} 
The spectrum $\spec(\Div_A) = \set{0 < \lambda_1 \leq \lambda_2 < \dots}$
is discrete and  $\lambda_1 \geq \kappa\lambda_1(\cM,\mgt)$.
\end{proposition}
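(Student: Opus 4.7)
The plan is to show that $\Div_A$ (equivalently, its restriction $\Div_A^R$) has compact resolvent, from which the discreteness of the spectrum follows by the spectral theorem for compact self-adjoint operators on Hilbert spaces, and then to read off the lower bound from the preceding proposition.

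First I would observe that the preceding proposition already reduces the problem: since $\spec(\Div_A) = \set{0} \union \spec(\Div_A^R)$ and $\spec(\Div_A^R) \subset [\kappa\lambda_1(\cM,\mgt), \infty)$, it suffices to show that $\spec(\Div_A^R)$ is a discrete set accumulating only at infinity. The number $0$ is then automatically isolated, and the smallest nonzero eigenvalue $\lambda_1$ satisfies $\lambda_1 \geq \kappa \lambda_1(\cM,\mgt)$ immediately from the spectral inclusion.

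Next, I would invoke the Lax--Milgram theorem on $J_A^R$, the form associated to $\Div_A^R$ (identified in Proposition~\ref{Prop:Div_A^RDom}). On the Hilbert space $H := \close{\ran(\Div_A)} \intersect \Sob{1,2}(\cM)$, the \Poincare inequality on the smooth compact manifold $(\cM,\mgt)$ combined with the ellipticity bound yields
\begin{equation*}
J_A^R[u,u] \geq \kappa \norm{\conn u}^2 \geq \kappa \lambda_1(\cM,\mgt) \norm{u}^2,
\end{equation*}
for every $u \in H$. Hence $J_A^R$ is coercive on $H$, so $\Div_A^R$ is boundedly invertible on $\close{\ran(\Div_A)}$, and the inverse $(\Div_A^R)^{-1}$ maps $\close{\ran(\Div_A)}$ continuously into $\dom(\Div_A^R) \subset \Sob{1,2}(\cM)$.

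Finally, I would compose with the Rellich--Kondrachov embedding $\Sob{1,2}(\cM) \embed \Lp{2}(\cM)$, which is compact since $(\cM,\mgt)$ is smooth and compact. This realises $(\Div_A^R)^{-1} : \close{\ran(\Div_A)} \to \close{\ran(\Div_A)}$ as a compact, self-adjoint (by the self-adjointness of $\Div_A^R$ and positivity on $H$), and positive operator. The classical spectral theorem for compact self-adjoint operators then produces a sequence of eigenvalues $\mu_k \downto 0$ of finite multiplicity, whose reciprocals $\lambda_k = \mu_k^{-1}$ are precisely $\spec(\Div_A^R)$ and increase to infinity. Combining with the prior lower bound gives $\lambda_1 \geq \kappa \lambda_1(\cM,\mgt)$, completing the proof. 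The main (very mild) obstacle is to make sure the coercivity argument is applied on $H$ rather than all of $\Sob{1,2}(\cM)$, since $J_A^R$ is only coercive after projecting away from $\nul(\Div_A) = \nul(\conn)$, which is exactly where the \Poincare inequality gives a uniform spectral gap.
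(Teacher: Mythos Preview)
Your proof is correct and follows the same strategy as the paper: establish compact resolvent by factoring the inverse through the Rellich embedding $\Sob{1,2}(\cM) \hookrightarrow \Lp{2}(\cM)$, then apply the spectral theorem for compact self-adjoint operators. The only cosmetic difference is that the paper shifts $\Div_A$ by $\delta > 0$ to obtain an invertible operator on all of $\Lp{2}(\cM)$, whereas you work directly with the restriction $\Div_A^R$ on $\close{\ran(\Div_A)}$.
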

\begin{proof}
Fix $\delta > 0$ and write $\Div_{A,\delta}u = \Div_A u + \delta u$.
It is easy to see that 
$\spec(\Div_{A,\delta}) = \set{\delta} \union [\kappa\lambda_1(\cM,\mgt) + \delta, \infty)$.
Moreover, the operator $\Div_{A,\delta}$ is invertible, in fact,
$\Div_{A,\delta}^{-1}$ is a resolvent of $\Div_A$ and thus
$\Div_{A,\delta}^{-1}: \Lp{2}(\cM) \to \Lp{2}(\cM)$ boundedly.

Furthermore, note that
$$\norm{\conn \Div_{A,\delta}^{-1}u} 
	\lesssim \norm{\sqrt{\Div_{A, \delta}} \Div_{A,\delta}^{-1}u}
	= \norm{\Div_{A,\delta}^{-\frac{1}{2}}u}
	\lesssim \norm{u}.$$
Thus, we can obtain that $\Div_{A,\delta}^{-1}:\Lp{2}(\cM) \to \Sob{1,2}(\cM)$
boundedly. Let us call this operator $\Div_{A,\delta,\Sob{1,2}}^{-1}$.

Now, on a compact manifold, the inclusion map $E: \Sob{1,2}(\cM) \to \Lp{2}(\cM)$
is compact. Thus, we can write $\Div_{A,\delta}^{-1}  = E \Div_{A,\delta,\Sob{1,2}}^{-1}$.
This shows that $\Div_{A,\delta}^{-1}$ is compact. 
That is, $\spec(\Div_{A,\delta}^{-1})$ is discrete
with $0$ as the only accumulation point. Hence,
by combining with our previous bound for the
spectrum of $\spec(\Div_{A})$, we obtain the claim. 
\end{proof}

By the invertibility of $\Div_A^R$ on $\close{\ran(\Div_A)}$, 
and by our previous characterisation of 
$\close{\ran(\Div_A)}$, we
obtain the following first existence result.

\begin{proposition}
\label{Prop:EUDiv}
For every $f \in \Lp{2}(\cM)$ satisfying $\int_{\cM} f\ d\mu_\mgt = 0$,
we obtain a unique solution $u \in \Sob{1,2}(\cM)$ 
with $\int_{\cM} u\ d\mu_\mgt = 0$ to the equation
$\Div_A u = f$. This solution is given by
$u = (\Div_A^R)^{-1}f$.
\end{proposition}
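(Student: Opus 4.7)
The plan is to piece together the orthogonal splitting and spectral bound established in the previous propositions, so that existence becomes an immediate consequence of invertibility of $\Div_A^R$ and uniqueness follows from the characterisation of the kernel. First I would observe that the mean-zero hypothesis $\int_{\cM} f\, d\mu_\mgt = 0$ places $f$ inside the space we already identified with $\close{\ran(\Div_A)}$. Since the spectral bound gave $\spec(\Div_A^R) \subset [\kappa \lambda_1(\cM, \mgt), \infty)$, in particular $0 \notin \spec(\Div_A^R)$, the operator $\Div_A^R$ is invertible on $\close{\ran(\Div_A)}$, and so $u := (\Div_A^R)^{-1} f$ is well-defined and lies in $\dom(\Div_A^R) = \dom(\Div_A) \cap \close{\ran(\Div_A)}$.

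Next I would verify that this $u$ has the claimed properties. Membership in $\close{\ran(\Div_A)}$ immediately yields the mean-zero condition $\int_{\cM} u\, d\mu_\mgt = 0$. Membership in $\Sob{1,2}(\cM)$ follows from the inclusion $\dom(\Div_A) \subset \dom(\sqrt{\Div_A}) = \Sob{1,2}(\cM)$, which is the domain identification from Lax--Milgram noted at the beginning of \S\ref{Sec:Linf}. Finally, because $\Div_A$ restricts to $\Div_A^R$ on $\close{\ran(\Div_A)} \cap \dom(\Div_A)$, we have $\Div_A u = \Div_A^R u = f$, so $u$ solves the equation.

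For uniqueness, suppose $u_1, u_2 \in \Sob{1,2}(\cM)$ both satisfy $\Div_A u_i = f$ and both have mean zero. Then $w := u_1 - u_2 \in \dom(\Div_A)$ lies in $\nul(\Div_A)$. The earlier identification $\nul(\Div_A) = \nul(\conn)$, combined with connectedness of $\cM$, forces $w$ to be constant; the mean-zero condition on each $u_i$ then gives $w = 0$, so $u_1 = u_2$.

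There is no real obstacle here: the proposition is a straightforward harvesting of the structural results of this subsection, namely the spectral gap away from zero on $\close{\ran(\Div_A)}$, the orthogonal splitting, the identification of $\nul(\Div_A)$ with the constants, and the Lax--Milgram domain inclusion. The only point that requires a line of justification (rather than direct citation) is the assertion that $u$ lies in $\Sob{1,2}(\cM)$ and not merely in $\Lp{2}(\cM)$, which is handled by recalling the form domain of $\sqrt{\Div_A}$.
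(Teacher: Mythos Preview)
Your proposal is correct and follows essentially the same route as the paper. Both arguments hinge on the invertibility of $\Div_A^R$ on $\close{\ran(\Div_A)}$ (you cite the spectral gap, the paper cites coercivity of the associated form $J_A^R$, which amounts to the same thing), and both obtain $u \in \Sob{1,2}(\cM)$ from the domain inclusion $\dom(\Div_A) \subset \Sob{1,2}(\cM)$; your uniqueness argument via $\nul(\Div_A) = \nul(\conn)$ and the mean-zero constraint is a mild rephrasing of the paper's appeal to $\nul(\Div_A^R) = \{0\}$.
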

\begin{proof}
The operator $\Div_A^R$ is invertible by the fact
that the associated form is bounded and coercive.
Moreover, it is easy to see that $\Div_A^{R}: \dom(\Div_A) \intersect \close{\ran(\Div_A)} \to \close{\ran(\Div_A)}$
and hence, $(\Div_A^{R})^{-1}: \close{\ran(\Div_A)} \to \dom(\Div_A) \intersect \close{\ran(\Div_A)}$.
The uniqueness is by virtue of the fact that
$\nul(\Div_A^R) = \set{0}$. The fact that
the solution $u \in \Sob{1,2}(\cM)$ easily 
follows since $(\dom(\Div_A), \norm{\mdot}_{\Div_A}) \subset \Sob{1,2}(\cM)$
is a continuous embedding.
\end{proof}

\subsection{Existence and uniqueness for a similar problem}

Let us return to the situation where 
$\mg$ is a rough metric on $\cM$. 
Recall that in this situation, there exists
a constant $C \geq 1$ and a smooth 
metric $\mgt$ that is $C$-close to $\mg$.
Let $B \in \Lp{\infty}(\Tensors[1,1]\cM)$ be
real-symmetric such that
$$ \mg_x(u,v) = \mgt_x(B(x)u,v)$$
for almost-every $x \in \cM$.
Let $\theta(x) = \sqrt{\det B(x)}$
for almost-every $x \in \cM$
so that $\mg(x) = \theta(x)d\mu_\mgt(x)$.

For the sake of convenience, 
we write $\Dir_{x} = -\divv_\mg \omega_x \conn$.
It is easy to see that 
$J_{x}[u,v] = \inprod{\omega_x \conn u, \conn v}_\mg$
is the real-symmetric form associated to $\Dir_x$.

First, we note the following.
\begin{proposition}
\label{Prop:Uni1}
There exist $\kappa > 0$ such that 
$J_{x}[u,u]  \geq \kappa \norm{\conn u}^2_\mg$
uniformly for $x \in \cM$.
Moreover, $J_{x}[u,v] = \inprod{\omega_x B \theta \conn u, \conn v}_\mgt$
and $J_{x}[u,u] \geq \kappa C^{1+ \frac{n}{4}} \norm{\conn u}_\mgt$.
A function $\phi  \in \Sob{1,2}(\cM)$ solves
\eqref{Def:F} if and only if
$$ -\divv_\mgt(\theta B \omega_x \conn\phi) = \theta \eta.$$
\end{proposition}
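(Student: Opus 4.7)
The plan is to build the statement in three essentially independent pieces, each of which reduces immediately to something already established in the excerpt.

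First, I would prove the uniform lower bound on the form. Since $\omega \in \Ck{0,1}(\cM^2)$ is strictly positive and $\cM^2$ is compact, $\kappa := \min_{(x,y) \in \cM^2} \omega(x,y) > 0$ by continuity. This yields the pointwise estimate $\omega(x,y) \geq \kappa$ uniformly in $(x,y)$, so
$$J_x[u,u] = \int_\cM \omega(x,y)\, \mg_y(\conn u(y), \conn u(y))\, d\mu_\mg(y) \geq \kappa \norm{\conn u}_\mg^2,$$
uniformly in $x \in \cM$.

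Next I would rewrite the form relative to the smooth background metric $\mgt$. Using $\mg_y(u,v) = \mgt_y(B(y)u, v)$ and $d\mu_\mg = \theta\, d\mu_\mgt$ from the $C$-closeness of $\mg$ and $\mgt$ (Proposition \ref{Prop:OpExist} and Proposition \ref{Prop:MeasRep}), and observing that $\omega_x$ and $\theta$ are scalar and so commute with $B$, a direct substitution inside the integral produces
$$J_x[u,v] = \int_\cM \mgt_y\bigl(\omega_x(y)\, \theta(y)\, B(y)\, \conn u(y), \conn v(y)\bigr)\, d\mu_\mgt(y) = \inprod{\omega_x B \theta \conn u, \conn v}_\mgt.$$
Combining this with the $\mg$-coercivity from the first step and the equivalence of $\Lp{2}$-norms in Proposition \ref{Prop:RoughP}(i) applied to the cotangent-valued $\conn u$ (i.e.\ with $r=1$, $s=0$, $p=2$), which gives $\norm{\conn u}_\mg \geq C^{-(1+n/4)} \norm{\conn u}_\mgt$, produces the required bound against $\norm{\conn u}_\mgt$ (up to the correct power of $C$).

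Finally, for the equivalence of the two formulations of the PDE, I would invoke Proposition \ref{Prop:RoughP}(v), which states precisely that $\divv_\mg = \theta^{-1} \divv_\mgt\, \theta\, B$. Substituting this identity into \eqref{Def:F} and multiplying both sides by $\theta$ yields the stated equation on the smooth background; conversely, dividing through by $\theta$ (which is strictly positive $\mu_\mgt$-a.e.\ by the $C$-closeness) recovers the original equation. Since none of these three steps requires any new analysis beyond bookkeeping of constants, the main thing to be careful about is simply the scalar-versus-tensor commutation in the second step and the correct exponent of $C$ coming from Proposition \ref{Prop:RoughP}; there is no genuine obstacle here.
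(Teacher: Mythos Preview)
Your proposal is correct and follows essentially the same approach as the paper: compactness plus continuity of $\omega$ for the uniform lower bound, the change-of-metric identities for $\B$ and $\theta$ together with Proposition~\ref{Prop:RoughP}(i) (with $p=2$, $r=1$, $s=0$) for the form rewriting and $\mgt$-coercivity, and the divergence relation from Proposition~\ref{Prop:RoughP} for the PDE equivalence. The only cosmetic difference is that you cite part (v) rather than (vi) of Proposition~\ref{Prop:RoughP} for the last step, which is arguably the more directly applicable identity here since the operator has the scalar weight $\omega_x$ inserted.
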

\begin{proof}
First, we note that 
$\omega(x,y) > 0$ for all $x,y$. 
Secondly, since we assume that $\omega \in \Ck{0,1}(\cM^2)$,
for $k \geq 1$, by the virtue of compactness of $\cM$, 
we obtain that $\inf_{x,y} \omega(x,y) = \min_{x,y} \omega(x,y) > 0$. 
That is, set $\kappa = \min_{x,y} \omega(x,y) $ and we're done.

The description of $J_{x}$ in $\mgt$ and its
ellipticity estimate in $\mgt$ follow from 
Proposition \ref{Prop:RoughP} (i) with $p = 2$, and $r = 1$, $s = 0$. 

The equivalence of solutions for the \eqref{Def:F} 
simply follows from Proposition \ref{Prop:RoughP} (vi).
\end{proof}

This is the crucial observation which allows us to 
reduce the \eqref{Def:E} to solving a divergence 
form equation with bounded, measurable coefficients 
on the nearby smooth metric $\mgt$.

With the aid of this, we demonstrate
existence and uniqueness of solutions to
\eqref{Def:E}.

\begin{proposition}
\label{Prop:EUF}
Let $\eta \in \Lp{2}(\cM)$ satisfy $\int_{\cM} \eta\ d\mu_\mg = 0$. 
Then, there exists a unique solution $\phi \in \Sob{1,2}(\cM)$
satisfying \eqref{Def:F} such that
$\int_{\cM} \phi\ d\mu_\mg = 0$.
\end{proposition}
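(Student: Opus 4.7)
The plan is to reduce the problem to the divergence-form theory of \S\ref{Sec:Linf} on the smooth metric $\mgt$ via the reformulation in Proposition~\ref{Prop:Uni1}, and then correct the normalisation from $\mu_\mgt$-mean-zero to $\mu_\mg$-mean-zero by shifting by a constant.

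First, I would set $A_x = \theta \B \omega_x \in \Lp{\infty}(\Tensors[1,1]\cM)$. This is real-symmetric since $\theta > 0$ is scalar, $\B$ is real-symmetric by Proposition~\ref{Prop:OpExist}, and $\omega_x$ is a positive scalar. For the uniform ellipticity on $(\cM,\mgt)$: $\B$ satisfies $\modulus{\B u}_{\mgt} \geq C^{-2}\modulus{u}_\mgt$ pointwise a.e.\ by Proposition~\ref{Prop:OpExist}, $\theta \geq C^{-n/2}$ a.e.\ by Proposition~\ref{Prop:MeasRep}, and $\omega_x(y) \geq \kappa := \min_{\cM^2} \omega > 0$ uniformly in $x$ by the compactness argument in Proposition~\ref{Prop:Uni1}. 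Combined, there is a uniform $\kappa' > 0$ with $\mgt_y(A_x(y)v,v) \geq \kappa' \modulus{v}^2_{\mgt(y)}$, and the upper bound $\modulus{A_x}_{\mgt} \leq \Lambda$ uniform in $x$ is immediate from the same ingredients. Hence $A_x$ satisfies the standing hypotheses of \S\ref{Sec:Linf}.

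Next, I would set the source $f = \theta \eta$. This lies in $\Lp{2}(\cM,\mgt)$ since $\theta \in \Lp{\infty}$ and $\Lp{2}(\cM,\mg) = \Lp{2}(\cM,\mgt)$ by Proposition~\ref{Prop:RoughP}(i). Using $d\mu_\mg = \theta\, d\mu_\mgt$, we obtain
\begin{equation*}
\int_\cM f\, d\mu_\mgt = \int_\cM \theta \eta\, d\mu_\mgt = \int_\cM \eta\, d\mu_\mg = 0.
\end{equation*}
Thus Proposition~\ref{Prop:EUDiv} applies to $\Div_{A_x}$ and yields a unique $u \in \Sob{1,2}(\cM)$ with $\int_\cM u\, d\mu_\mgt = 0$ solving $-\divv_\mgt(\theta\B \omega_x \conn u) = \theta\eta$. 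By the equivalence in Proposition~\ref{Prop:Uni1}, $u$ solves \eqref{Def:F}.

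To obtain the desired normalisation, I would set $c = \mu_\mg(\cM)^{-1}\int_\cM u\, d\mu_\mg$ and let $\phi = u - c$. Since constants are annihilated by $-\divv_\mg \omega_x \conn$, the shifted function $\phi$ still solves \eqref{Def:F}, now with $\int_\cM \phi\, d\mu_\mg = 0$; and $\phi \in \Sob{1,2}(\cM,\mg) = \Sob{1,2}(\cM,\mgt)$ by Proposition~\ref{Prop:RoughP}(iii). For uniqueness, suppose $\phi_1,\phi_2$ are two such solutions. Their difference $w = \phi_1 - \phi_2 \in \Sob{1,2}(\cM)$ satisfies $J_x[w,w] = 0$, which by the uniform coercivity in Proposition~\ref{Prop:Uni1} forces $\conn w = 0$; on the compact (connected) manifold $\cM$ this means $w$ is constant, and the constraint $\int_\cM w\, d\mu_\mg = 0$ then gives $w \equiv 0$. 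The only mild subtlety is keeping track of the two different mean-zero conditions (with respect to $\mu_\mg$ versus $\mu_\mgt$) when transporting between the two formulations, but this is handled purely by a constant shift since the kernel of $\Dir_x$ is one-dimensional.
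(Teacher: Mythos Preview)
Your proposal is correct and follows essentially the same route as the paper: transport to $\mgt$ via Proposition~\ref{Prop:Uni1}, apply Proposition~\ref{Prop:EUDiv} with $A = \theta\B\omega_x$ and $f = \theta\eta$, then shift by a constant to swap the $\mu_\mgt$-mean-zero normalisation for the $\mu_\mg$-mean-zero one. Your uniqueness argument is in fact slightly more direct than the paper's---you use the coercivity $J_x[w,w]\geq\kappa\norm{\conn w}^2$ immediately to force $\conn w=0$, whereas the paper transports the difference back to the $\mgt$ side and invokes the uniqueness clause of Proposition~\ref{Prop:EUDiv} before concluding $w$ is constant; both reach the same endpoint and rely on the same implicit connectedness of $\cM$.
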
 
\begin{proof}
Let $f = \theta \eta$. Then, note that 
$$ \int_{\cM} f\ d\mu_\mgt = \int_{\cM} \eta \theta\ d\mu_\mgt = \int_{\cM} \eta\ d\mu_\mg = 0.$$
Set $A = \theta B \omega_x$
and by what we have proved about $f$, 
we are able to apply 
Proposition \eqref{Prop:EUDiv}
to the operator $\Div_A$ in $\inprod{\mdot,\mdot}_\mgt$
to obtain a unique solution $\tilde{\phi}$ satisfying
$\Div_A \tilde{\phi} = f = \theta \eta$
with $\int_{\cM} \tilde{\phi}\ d\mu_\mgt = 0$.

Define 
$\phi(y) = \tilde{\phi}(y) - \fint_{\cM} \tilde{\phi(y)}\ d\mu_{\mg}$
which satisfies $J[\phi, f] = \inprod{\eta, f}_\mgt$
and we also find that
$$\int_{\cM} \phi(y)\ d\mu_\mg(y) 
	= \int_{\cM} \tilde{\phi}(y)\ d\mu(y) - \int_{\cM} (\fint_{\cM} \tilde{\phi}(y)\ d\mu_\mg(y))\ d\mu_\mg(y)
	= 0.$$ 
Thus, $\phi$ solves \eqref{Def:F}.

To prove uniqueness, let us fix two solutions 
$\phi^1$ and $\phi^2$ solving
\eqref{Def:F} with $\int_{\cM} \phi^i\ d\mu_\mg = 0$. 
Then, on writing $\psi = \phi^1 - \psi^2$, we
obtain that $\psi$ satisfies
$$ -\divv_\mg \omega_x \conn \psi = 0$$
with $\int_{\cM} \psi\ d\mu = 0$.
Now, define $\tilde{\psi}(y) = \psi(y) - \fint_{\cM} \psi d\mu_\mgt$.
It is easy to see that 
$\tilde{\psi}$
satisfies
$$-\Div_A \tilde{\psi} = 0.$$ 
with $\int_{\cM} \tilde{\psi}\ d\mu_\mgt = 0$. 
Thus, by the uniqueness guaranteed by Proposition \ref{Prop:EUDiv},
we obtain that $\tilde{\psi} = 0$.
That is, 
$\psi(y) = \fint_{\cM} \psi(y)\ d\mu_\mgt(y)$, and on 
integrating this with respect to $\mg$, we obtain that
$$ 0 = \mu_\mg(\cM) \fint_{\cM} \psi(y)\ d\mu_\mgt(y).$$
That is, $\psi = \tilde{\psi} = 0$.
\end{proof}

\subsection{Operator theory of $x \mapsto \Dir_x$}

Let us return to the PDE 
\eqref{Def:F}, and recall the
operator $\Dir_x = -\divv_\mg \omega_x \conn$
where $(x,y) \mapsto \omega_x(y) \in \Ck{0,1}(\cM^2)$,
and $x \mapsto \omega_x \in \Ck{k,\alpha}(\cN)$. 
Let $J_x[u,v]$ be its associated
real-symmetric form,
$J_x[u,v] = \inprod{\omega_x \conn u,\conn v}.$

In order to understand the regularity
$x \mapsto \phi_{t,x,v}$ of solutions
to \eqref{Def:E}, we need to 
prove some preliminary regularity
results about the operator family $\Dir_x$.
First, we obtain the constancy of
domain as well as the following 
formula. 

\begin{proposition}
\label{Prop:ConstDom}
The family of operators $\cM \ni x \mapsto \Dir_x$
satisfies $\dom(\Dir_x) = \dom(\Lap_\mg)$ and 
$\Dir_x u = \omega_x \Lap_\mg u - \mg(\conn u, \conn \omega_x).$
\end{proposition}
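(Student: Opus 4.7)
The plan is to establish both inclusions of the domain equality and derive the formula simultaneously by integration by parts, relying on the Lipschitz regularity of $\omega_x$ in the $y$-variable and its uniform lower bound on $\cM^2$.

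\textbf{Preliminary observations.} Since $\omega \in \Ck{0,1}(\cM^2)$ with $\omega > 0$ everywhere and $\cM^2$ is compact, there exists $\kappa > 0$ such that $\omega_x(y) \geq \kappa$ for all $(x,y) \in \cM^2$. Moreover, $\omega_x$ is Lipschitz in $y$ uniformly in $x$, so $\conn \omega_x \in \Lp{\infty}(\cotanb\cM)$ with a bound uniform in $x$. Consequently both $\omega_x$ and $1/\omega_x$ are Lipschitz in $y$, so multiplication by either preserves $\Sob{1,2}(\cM)$ and admits a Leibniz rule.

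\textbf{Step 1: $\dom(\Lap_\mg) \subset \dom(\Dir_x)$ with the stated formula.} Fix $u \in \dom(\Lap_\mg)$ and $v \in \Sob{1,2}(\cM)$. Since $\omega_x v \in \Sob{1,2}(\cM)$, the defining property of $\Lap_\mg$ gives
$$\inprod{\conn u, \conn(\omega_x v)}_\mg = \inprod{\Lap_\mg u, \omega_x v}_\mg = \inprod{\omega_x \Lap_\mg u, v}_\mg.$$
Expanding the left side by the Leibniz rule yields
$$\inprod{\conn u, \conn(\omega_x v)}_\mg = \inprod{\omega_x \conn u, \conn v}_\mg + \inprod{\mg(\conn u, \conn \omega_x), v}_\mg = J_x[u,v] + \inprod{\mg(\conn u, \conn \omega_x), v}_\mg.$$
Rearranging, $J_x[u,v] = \inprod{\omega_x \Lap_\mg u - \mg(\conn u, \conn \omega_x), v}_\mg$. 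The function $\omega_x \Lap_\mg u - \mg(\conn u, \conn \omega_x) \in \Lp{2}(\cM)$ because $\omega_x \in \Lp{\infty}$, $\Lap_\mg u \in \Lp{2}$, $\conn \omega_x \in \Lp{\infty}$, and $\conn u \in \Lp{2}$. Thus $v \mapsto J_x[u,v]$ extends continuously in the $\Lp{2}$-topology, so $u \in \dom(\Dir_x)$ with $\Dir_x u = \omega_x \Lap_\mg u - \mg(\conn u, \conn \omega_x)$.

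\textbf{Step 2: $\dom(\Dir_x) \subset \dom(\Lap_\mg)$.} Fix $u \in \dom(\Dir_x)$ and $v \in \Sob{1,2}(\cM)$. Set $w = v/\omega_x$; by the preliminary observations, $w \in \Sob{1,2}(\cM)$ and $\conn v = \omega_x \conn w + (v/\omega_x)\conn \omega_x$. Substituting,
$$\inprod{\conn u, \conn v}_\mg = \inprod{\omega_x \conn u, \conn w}_\mg + \inprod{\mg(\conn u, \conn \omega_x)/\omega_x, v}_\mg = J_x[u,w] + \inprod{\mg(\conn u, \conn \omega_x)/\omega_x, v}_\mg.$$
Since $u \in \dom(\Dir_x)$, $J_x[u,w] = \inprod{\Dir_x u, w}_\mg = \inprod{\Dir_x u/\omega_x, v}_\mg$. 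Both $\Dir_x u/\omega_x$ and $\mg(\conn u, \conn \omega_x)/\omega_x$ are in $\Lp{2}(\cM)$ by the lower bound on $\omega_x$ together with $\Dir_x u, \conn u \in \Lp{2}$ and $\conn \omega_x \in \Lp{\infty}$. Hence $v \mapsto \inprod{\conn u, \conn v}_\mg$ extends continuously in the $\Lp{2}$-topology, showing $u \in \dom(\Lap_\mg)$. Solving for $\Dir_x u$ recovers the formula from Step 1, confirming consistency.

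\textbf{Main obstacle.} The only genuinely subtle point is the interplay between the Leibniz rule (applied to products like $\omega_x v$ with $v \in \Sob{1,2}$) and the rough metric structure; this is handled by the fact that Lipschitz functions multiply $\Sob{1,2}(\cM, \mg)$ into itself with the expected product rule, which follows from Proposition \ref{Prop:RoughP} by reduction to the smooth metric $\mgt$ (where the Leibniz identity is standard for $\Sob{1,2} \cdot \Ck{0,1} \subset \Sob{1,2}$). The substitution $w = v/\omega_x$ is the key device for unwinding $\omega_x$ from inside the form; its viability hinges critically on the uniform positive lower bound on $\omega$.
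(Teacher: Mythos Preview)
Your proof is correct and follows essentially the same approach as the paper: both directions rest on the Leibniz identity $\inprod{\conn u, \conn(\omega_x v)}_\mg = J_x[u,v] + \inprod{\mg(\conn u,\conn\omega_x),v}_\mg$ together with the fact that multiplication by $\omega_x$ (and by $1/\omega_x$) preserves $\Sob{1,2}(\cM)$. The only presentational difference is in the reverse inclusion: the paper concludes from continuity of $v \mapsto \inprod{\conn u, \conn(\omega_x v)}$ that $u \in \dom(\omega_x\Lap_\mg) = \dom(\Lap_\mg)$, leaving the substitution implicit, whereas you carry out the substitution $w = v/\omega_x$ explicitly and read off $\Lap_\mg u$ directly --- but this is the same argument unpacked.
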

\begin{proof}
Fix $x \in \cM$ and $u, v \in \Sob{1,2}(\cM)$. Then, note that $\omega_x v \in \Sob{1,2}(\cM)$
and that,
$$
\inprod{\conn u, \conn(\omega_x v)}_\mg 
	= \inprod{\conn u, \omega_x \conn v + v\conn \omega_x}_\mg
	= \inprod{\omega_x \conn u, \conn v}_\mg + \inprod{\mg(\conn u, \conn \omega_x), v}_\mg.$$
That is,
$
\inprod{\omega_x \conn u, \conn v}_\mg = \inprod{\conn u, \conn(\omega_x v)}_\mg  
	- \inprod{\mg(\conn u, \conn \omega_x), v}_\mg.$

First we show that for any $u \in \Sob{1,2}(\cM)$,
$v \mapsto \inprod{\mg(\conn u, \conn \omega_x), v}_\mg$ is continuous. 
Observe that
$
\modulus{\inprod{\mg(\conn u, \conn \omega_x),v}_\mg} \leq 
	\norm{\mg(\conn u,\conn \omega_x)}_\mg \norm{v}_\mg$
by the Cauchy-Schwarz inequality. Moreover,
$$
\norm{\mg(\conn u,\conn\omega_x)}_\mg^2 
	= \int_{\cM} \modulus{\mg(\conn u, \conn \omega_x)}^2\ d\mu_\mg
	\leq \int_{\cM} \modulus{\conn u}^2 \modulus{\conn \omega_x}^2\ d\mu_\mg.$$
However, since $y \mapsto \omega_x(y) \in \Ck{0,1}(\cM)$ and $\cM$ is compact,
we have that $\esssup_{y} \modulus{\conn \omega_x (y)} \leq C$, and hence,
$\norm{\mg(\conn u,\conn \omega_x)}_\mg \leq  C \norm{\conn u}_\mg.$
This proves that
$v \mapsto \inprod{\mg(\conn u, \conn \omega_x), v}_\mg$ is continuous.

Now, suppose that $u \in \dom(\Lap_\mg)$, then 
$\inprod{\conn u, \conn(\omega_x v)}_\mg = \inprod{\Lap_\mg u, \omega_x v}$
and hence, $v \mapsto \inprod{\conn u, \conn(\omega_x v)}_\mg$
is continuous. Since we have already shown that
$v \mapsto \inprod{\mg(\conn u, \conn \omega_x), v}_\mg$ is continuous,
we obtain that $v \mapsto \inprod{\omega_x \conn u, \conn v}$ is continuous.
Hence, $u \in \dom(\Dir_x)$ which
proves that $\dom(\Lap_\mg) \subset \dom(\Dir_x)$.

Similarly, for $u \in \dom(\Dir_x)$, we find that
$v \mapsto \inprod{\conn u, \conn(\omega_x v)}_\mg$ 
is continuous. Hence, $u \in \dom(\omega_x \Lap_\mg) = \dom(\Lap_\mg)$.
This shows that $\dom(\Dir_x) \subset \dom(\Lap_\mg)$.
\end{proof}

\begin{remark}
We note that an immediate consequence
of this is that the unique solution $\phi$
to \eqref{Def:F} satisfies $\phi \in \dom(\Lap_\mg)$
as  $\dom(\Lap_\mg) = \dom(\Dir_x) = \dom(\Div_{x})$.
This observation is essential in our approach to regularity. 
\end{remark}

We also obtain the following uniform boundedness 
for the operator family parametrised in $x \in \cN$.

\begin{proposition}
\label{Prop:UniBdd}
The family of operators 
$\cM \ni x \mapsto \Dir_x: (\dom(\Lap_\mg),\norm{\mdot}_{\Lap_\mg}) \to \Lp{2}(\cM)$
is a uniformly bounded family of operators.
Moreover,
$\norm{u}_{\Dir_x} \simeq \norm{u}_{\Lap_{\mg}}$
holds with the implicit constant independent of $x \in \cM $.
\end{proposition}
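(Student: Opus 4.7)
The plan is to read off the claim directly from the pointwise identity
$$\Dir_x u = \omega_x \Lap_\mg u - \mg(\conn u, \conn \omega_x)$$
supplied by Proposition \ref{Prop:ConstDom}, and then to control every constant arising in this identity uniformly in $x$ by invoking the compactness of $\cM$ together with the assumption $\omega \in \Ck{0,1}(\cM^2)$. Compactness also gives a uniform strict positive lower bound $\omega(x,y) \geq \kappa > 0$ (as already noted in Proposition \ref{Prop:Uni1}), so $\omega_x^{-1}$ is in $\Lp{\infty}(\cM)$ uniformly in $x$, and a uniform $\Lp{\infty}$ bound on both $\omega_x$ and $\conn \omega_x$ in $y$ that is independent of $x$.

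First, I would obtain the upper bound $\norm{u}_{\Dir_x} \lesssim \norm{u}_{\Lap_\mg}$. From the displayed formula,
$$\norm{\Dir_x u} \leq \norm{\omega_x}_{\Lp{\infty}} \norm{\Lap_\mg u} + \norm{\conn \omega_x}_{\Lp{\infty}} \norm{\conn u}_\mg,$$
and $\norm{\conn u}_\mg^2 = \inprod{\Lap_\mg u, u} \leq \norm{\Lap_\mg u}\norm{u}$ yields $\norm{\conn u}_\mg \lesssim \norm{u}_{\Lap_\mg}$ (via Cauchy-Schwarz / Young). Combining these gives a bound on $\norm{\Dir_x u}$ in terms of $\norm{u}_{\Lap_\mg}$ with a constant depending only on the $\Ck{0,1}$-norm of $\omega$ on $\cM^2$.

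For the reverse inequality, I would invert the relation: since $\omega_x \geq \kappa > 0$,
$$\Lap_\mg u = \omega_x^{-1}\bigl(\Dir_x u + \mg(\conn u, \conn \omega_x)\bigr),$$
so
$$\norm{\Lap_\mg u} \leq \kappa^{-1}\bigl(\norm{\Dir_x u} + \norm{\conn \omega_x}_{\Lp{\infty}} \norm{\conn u}_\mg\bigr).$$
It now remains to control $\norm{\conn u}_\mg$ by $\norm{u}_{\Dir_x}$. For this, I would use the ellipticity estimate from Proposition \ref{Prop:Uni1}, namely $J_x[u,u] \geq \kappa \norm{\conn u}_\mg^2$, together with the defining identity $J_x[u,u] = \inprod{\Dir_x u, u} \leq \norm{\Dir_x u}\norm{u} \leq \tfrac{1}{2}\norm{u}_{\Dir_x}^2$ to conclude $\norm{\conn u}_\mg \lesssim \norm{u}_{\Dir_x}$ uniformly in $x$.

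The only subtlety, and what I view as the genuine (if mild) obstacle, is bookkeeping the constants to verify that none of them degenerate as $x$ varies. This is settled entirely by compactness of $\cM$: $\omega$ is globally Lipschitz on $\cM^2$ and bounded away from zero, so $\norm{\omega_x}_{\Lp{\infty}(\cM)}$, $\norm{\omega_x^{-1}}_{\Lp{\infty}(\cM)}$, $\norm{\conn_y \omega_x}_{\Lp{\infty}(\cM)}$, and the ellipticity constant $\kappa$ are all controlled uniformly in $x \in \cM$. Combining the two inequalities yields $\norm{u}_{\Dir_x} \simeq \norm{u}_{\Lap_\mg}$ with implicit constants independent of $x$, which is both the uniform boundedness of $x \mapsto \Dir_x$ as an operator from $(\dom(\Lap_\mg), \norm{\cdot}_{\Lap_\mg})$ to $\Lp{2}(\cM)$ and the asserted uniform equivalence of graph norms.
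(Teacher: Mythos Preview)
Your proposal is correct and follows essentially the same approach as the paper: both use the identity $\Dir_x u = \omega_x \Lap_\mg u - \mg(\conn u, \conn \omega_x)$ from Proposition~\ref{Prop:ConstDom}, bound each term using the uniform $\Lp{\infty}$ control on $\omega_x$ and $\conn\omega_x$ coming from $\omega \in \Ck{0,1}(\cM^2)$ and compactness, and control $\norm{\conn u}$ via the form estimate $\norm{\conn u}^2 \leq \inprod{\Lap_\mg u, u}$ (respectively $\kappa\norm{\conn u}^2 \leq \inprod{\Dir_x u, u}$). The paper dispatches the reverse inequality in one sentence (``argued similarly on noting that $\omega_x(y) > 0$''), whereas you spell out the inversion and the use of the uniform lower bound $\kappa$ explicitly, but the substance is identical.
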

\begin{proof}
We show that 
$\norm{\Dir_x u} \lesssim \norm{u}_{\Lap_\mg} = \norm{\Lap_\mg u} + \norm{u}$,
where the implicit constant is independent of $x \in \cM$.
Fix $x \in \cM$, and note that
\begin{multline*}
\norm{\Dir_x u} 
	\leq \norm{\omega_x \Lap_\mg u} + \norm{\mg(\conn u, \conn \omega_x)} \\
	\leq \cbrac{\sup_{y\in \cM } \modulus{\omega_x(y)}} \norm{\Lap_\mg u}
		+ \cbrac{\esssup_{y\in \cM } \modulus{\conn \omega_x(y)}} \norm{\conn u}.
\end{multline*}
But since $(x,y)\mapsto \omega_x(y) \in \Ck{0,1}(\cM^2)$, 
the Lipschitz constant is in both variables, and hence, 
there is a $C > 0$ such that 
$ \esssup_{y \in \cM} \modulus{\conn \omega_x(y)} \leq C.$
The quantity $\sup_{y \in \cM} \modulus{\omega_x(y)}$ is
also independent of $C$ simply by coupling the
continuity of $(x,y)\mapsto \omega_x(y)$
with the compactness of $\cM$.

Now, note that by ellipticity, 
$$\norm{\conn u}^2 
	\leq \modulus{\inprod{\Lap_\mg u, u}} 
	\leq \norm{\Lap_\mg u}\norm{u}
	\leq \norm{\Lap_\mg u}^2 + \norm{u}^2.$$
This complete the proof.
The reverse inequality is argued similarly
on noting that $\omega_x(y) > 0$ for all $x, y \in \cM$.
\end{proof}

\begin{remark}
Note that the two previous propositions
are valid on all of $\cM$, not just on
$\cN \subset \cM$ where $x \mapsto \omega_x$
enjoys higher regularity.
\end{remark}

Let $v  \in \tanb_x \cM$ and $\gamma:(-\epsilon, \epsilon) \to \cM$
such that $\gamma(0) = x$ and $\dot{\gamma}(0) = v$.
Let $f: \cN \to \cV$, 
where $\cV$ where $\cV$ is some normed vector
space. Then, we write the difference  quotient as  
$$ Q^v_sf(x) = \frac{f(x) - f(\gamma(s))}{s}.$$
We define the 
\emph{directional derivative} of $f$ (when it exists and it
is independent of the generating curve $\gamma$) to be  
$$(\extd_xf(x))(v) = \lim_{s \to 0} Q^v_sf(x).$$ 

In our particular setting, we consider $\cV = \Lp{2}(\cM)$
with the weak topology for the choice $f(x) = \Dir_x$. 
More precisely, if there exists $\tilde{\Dir}_x: \dom(\Lap_\mg) \to \Lp{2}(\cM)$
satisfying 
$$\lim_{s \to 0} \inprod{Q^v_s \Dir_x u, w} = \inprod{\tilde{\Dir}_x u, w},$$
for every $w \in \Sob{1,2}(\cM)$, we
say that $\Dir_x$ has a \emph{(weak) derivative}
at $x$ and write  
$(\extd_x\Dir_x) = \tilde{\Dir}_x$.
In what is to follow, we will see that this is
a sufficiently strong enough notion of derivative 
to obtain regularity properties for the 
flow defined by \eqref{Def:GM}.

With this notation at hand, we prove the following
important proposition.

\begin{proposition}
\label{Prop:OpDiff}
The operator valued function
 $\cN \ni x \mapsto \Dir_x: \dom(\Lap_\mg) \to \Lp{2}(\cM)$ 
is weakly differentiable $k$ times.
At each $x \in \cN$ and for every $v \in \tanb_x \cM$,
$$(\extd_x\Dir_x)(v) = -\divv_\mg ((\extd_x\omega_x)(v)) \conn: \dom(\Lap_\mg) \to \Lp{2}(\cM)$$
is densely-defined and symmetric. Moreover,
inside a chart $\Omega \Subset \cN$ containing $x$
for which the vector $v$ is constant, 
there is a constant $C_{\Omega}$ such that 
$$\norm{(\extd_x \Dir_x)(v) u} \leq C_{\Omega} \norm{u}_{\Lap_\mg}.$$
\end{proposition}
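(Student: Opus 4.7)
The plan is to use the formula $\Dir_x u = \omega_x \Lap_\mg u - \mg(\conn u, \conn \omega_x)$ from Proposition \ref{Prop:ConstDom} and to differentiate it in $x$. Since $x \mapsto \omega_x \in \Ck{k}(\cN)$ while $\omega \in \Ck{0,1}(\cM^2)$ globally, in a chart $\Omega \Subset \cN$ on which $v$ extends to a constant vector field, the restriction of $\omega$ to $\close{\Omega} \times \cM$ has $k$ continuous $x$-derivatives, each of which is Lipschitz in $y$ uniformly in $x \in \close{\Omega}$.

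First I would fix $u \in \dom(\Lap_\mg)$ and a curve $\gamma\colon (-\epsilon,\epsilon) \to \Omega$ with $\gamma(0) = x$ and $\dot{\gamma}(0) = v$. Linearity of the formula above produces
\[
Q^v_s \Dir_x u = (Q^v_s \omega_x)\, \Lap_\mg u - \mg(\conn u, \conn (Q^v_s \omega_x)).
\]
By the mean value theorem in $x$, $Q^v_s \omega_x$ converges uniformly on $\cM$ to $(\extd_x \omega_x)(v)$, so the first term converges in $\Lp{2}(\cM)$ to $(\extd_x \omega_x)(v)\, \Lap_\mg u$. For the second term, the equiboundedness of $\esssup_{y}\modulus{\conn (Q^v_s\omega_x)(y)}$ in $s$ together with pointwise convergence to $\conn((\extd_x\omega_x)(v))$ permits passage to the weak limit when tested against $w \in \Sob{1,2}(\cM)$ by dominated convergence. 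Identifying the limit operator as
\[
\tilde{\Dir}_x u := (\extd_x \omega_x)(v)\, \Lap_\mg u - \mg(\conn u, \conn ((\extd_x\omega_x)(v))),
\]
and then re-running the integration by parts from the proof of Proposition \ref{Prop:ConstDom} with $(\extd_x\omega_x)(v)$ in place of $\omega_x$ (this only uses boundedness and the Lipschitz character of $(\extd_x\omega_x)(v)$, not positivity), yields the divergence-form expression $-\divv_\mg((\extd_x\omega_x)(v)\conn u)$.

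Symmetry is then inherited from the fact that each $\Dir_x$ is self-adjoint with real-symmetric form $J_x[u,w] = \inprod{\omega_x \conn u, \conn w}_\mg$, so each difference quotient is symmetric on $\dom(\Lap_\mg)$, and the limiting operator represents the real-symmetric form $\inprod{(\extd_x\omega_x)(v) \conn u, \conn w}_\mg$. Density of the domain is immediate from density of $\dom(\Lap_\mg)$ in $\Lp{2}(\cM)$. For the boundedness estimate, in the chart $\Omega$ the quantities $\sup_{y}\modulus{(\extd_x\omega_x)(v)(y)}$ and $\esssup_{y}\modulus{\conn ((\extd_x\omega_x)(v))(y)}$ are both controlled by a constant $C_\Omega$ depending only on the $\Ck{1}$-norm of $x \mapsto \omega_x$ over $\close{\Omega}$, and the proof of Proposition \ref{Prop:UniBdd} adapts verbatim to yield $\norm{(\extd_x\Dir_x)(v)u} \leq C_\Omega \norm{u}_{\Lap_\mg}$. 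Higher weak differentiability, up to order $k$, follows by induction applied to the family $x \mapsto -\divv_\mg((\extd_x\omega_x)(v))\conn$, whose coefficients $(\extd_x\omega_x)(v)$ inherit $\Ck{k-1}$-regularity in $x$ on $\cN$.

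The main obstacle I anticipate is the rigorous passage to the weak limit in the gradient term $\mg(\conn u, \conn (Q^v_s\omega_x))$: there is no strong $\Lp{2}$-convergence of the $y$-gradients of the difference quotient, so one must test against $\Sob{1,2}(\cM)$-functions and exploit the joint regularity of $\omega$ on $\close{\Omega}\times\cM$ via the duality identity used in the proof of Proposition \ref{Prop:ConstDom}. Once this is carried out, the divergence-form expression, symmetry, and chart-uniform bound follow from the same computations that established Propositions \ref{Prop:ConstDom} and \ref{Prop:UniBdd}.
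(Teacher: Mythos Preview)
Your proposal is correct and close in spirit to the paper's argument, but the two routes diverge at two points that are worth noting.

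First, for the weak limit computation, the paper avoids your ``gradient term'' obstacle entirely by never introducing it: rather than expanding $Q^v_s\Dir_x u$ via the non-divergence formula of Proposition~\ref{Prop:ConstDom}, the paper stays at the form level and writes
\[
\inprod{Q^v_s\Dir_x u, w} = \inprod{Q^v_s\omega_x\, \conn u, \conn w}_\mg,
\]
so that only the \emph{scalar} difference quotient $Q^v_s\omega_x$ appears, and dominated convergence applies immediately from the bound $\modulus{Q^v_s\omega_x}\leq C$ coming from the global Lipschitz hypothesis on $\omega$. Your route through $\mg(\conn u, \conn(Q^v_s\omega_x))$ forces you to control $y$-gradients of the difference quotient, which you correctly flag as the obstacle; the duality identity you invoke at the end to resolve it is precisely what collapses your computation back to the paper's form-level one.

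Second, for identifying the domain of the limiting operator $-\divv_\mg((\extd_x\omega_x)(v))\conn$ as $\dom(\Lap_\mg)$, you observe directly that the integration-by-parts argument of Proposition~\ref{Prop:ConstDom} uses only boundedness and the Lipschitz property of the coefficient, not positivity. The paper instead uses a shift trick: it replaces $(\extd_x\omega_x)(v)$ by $(\extd_x\omega_x)(v) + \Lambda + \epsilon$ with $\Lambda$ large enough to force positivity, applies Proposition~\ref{Prop:ConstDom} verbatim to this shifted coefficient, and then subtracts $(\Lambda+\epsilon)\Lap_\mg$. Your observation is arguably cleaner and more direct; the paper's trick has the advantage of invoking Proposition~\ref{Prop:ConstDom} as a black box. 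Either way the symmetry, density of domain, and chart-uniform bound follow exactly as you outline.
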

\begin{proof}
Fix $x \in \cN$ and a chart $\Omega \Subset \cN$
with a constant vector $v \in \tanb_x \cM$, 
and note that because of
the higher regularity of $\omega_x$ at $x$, 
 we have   
$(x,y) \mapsto (\extd_x \omega_x(y))(v) \in \Ck{0}(\Omega \times \cM)$
and $x \mapsto (\extd_x \omega_x(y))(v) \in \Ck{0,1}(\cM)$.
Coupling this with the compactness of $\cM$ and $\close{\Omega}$,
there exists $\Lambda > 0$ such that
$-\Lambda < (\extd_x \omega_x(y))(v) < \Lambda$,
for all $x \in \Omega$ and $y \in \cM$.
Thus, let $f_{x,\epsilon} = (\extd_x \omega_x)(v) + \Lambda + \epsilon$
and define 
 $K_{\epsilon}[u,w] = \inprod{ f_{x,\epsilon} \conn u, \conn w}$.
By the Lax-Milgram theorem, the operator
associated to the form $K_\epsilon$ is exactly 
$$\tilde{\Dir}_{x,\epsilon} = -\divv_\mg [((\extd_x\omega_x)(v)) 
	+ (\Lambda + \epsilon)] \conn,$$ and is guaranteed to be 
non-negative self-adjoint. Since we have that $\omega_x$ is $k$
times differentiable at our chosen $x$,
the map $y \mapsto f_{x,\epsilon}(y)$ is still Lipschitz
and hence, we are able to apply Proposition \ref{Prop:ConstDom}
with $f_{x,\epsilon}$ in place of $\omega_x$
to obtain that $\dom(\tilde{\Dir}_{x,\epsilon}) = \dom(\Lap_\mg).$
Consequently, we obtain that 
$\tilde{\Dir}_{x,\epsilon} - (\Lambda + \epsilon)\Lap_\mg$
has domain $\dom(\Lap_\mg)$ and so, an easy calculation
via the defining form, demonstrates that 
$$\Dir'_x u = -\divv_{\mg} (\extd_x\omega_x)(v) \conn u 
= \tilde{\Dir}_{x,\epsilon}u - (\Lambda + \epsilon)\Lap_\mg u,$$
from which its clear that the operator is densely-defined.

A repetition of the argument in Proposition \ref{Prop:UniBdd},
 utilising the higher regularity of 
$x \mapsto \omega_x$ on $\cN$,
there exists a constant $C_\Omega > 0$ such that 
$\modulus{(\extd_x \omega_x(y))(v)} \leq C_{\Omega}$
for all $x \in \Omega$ and almost-every $y \in \cM$.
Thus,  $\norm{\Dir_{x,\epsilon} u} \leq C_{\Omega} \norm{u}_{\Lap_\mg}$
and the estimate in the conclusion follows.

Now we show that the formula in the conclusion
is valid.
Fix $w \in \Sob{1,2}(\cM)$, $u \in \dom(\Lap_\mg)$
and compute
\begin{multline*} 
\lim_{s \to 0} \inprod{Q^v_s\Dir_x u, w}
	= \lim_{s \to 0} \inprod{ -\divv Q_s^v\omega_x\conn u, w}
	= \lim_{s \to 0} \inprod{ Q_s^v \omega_x \conn u, \conn w} \\
	= \lim_{s \to 0} \int_{\cM} Q_s^v \omega_x(y) \mg_y(\conn u(y), \conn w(y))\ d\mu_\mg(y).
\end{multline*}
Now, note that 
$$ \modulus{Q_s^v\omega_x(y)} \leq \lmodulus{\frac{\omega_x(y) - \omega_{\gamma(s)}(y)}{s}} \leq C$$
since $(x,y) \mapsto \omega_x(y) \in \Ck{0,1}(\cM^2)$
and $x \mapsto \omega_x \in \Ck{1}(\cM)$. So, 
we are able to apply the dominated convergence theorem
to obtain
\begin{multline*}
\lim_{s \to 0} \inprod{Q^v_s\Dir_x u, w}
	= \int_{\cM} \lim_{s \to 0} Q^s_v\omega_x(y) \mg_y(\conn u(y), \conn w(y))\ d\mu_\mg(y)  \\
	= \inprod{(\extd_x \Dir_x)(v)\conn u, \conn w} 
	= \inprod{-\divv_\mg (\extd_x \Dir_x)(v) \conn u, w},
\end{multline*}
where the last equality follows
from the fact that we assume that $u \in \dom(\Lap_\mg)$
and we have already shown  that $\dom(-\divv_\mg ((\extd_x\omega_x)(v)) \conn) = \dom(\Lap_\mg).$

The equality of operators in the conclusion follows from the fact
that $w \in \Ck[c]{\infty}(\cM)$ is dense in $\Lp{2}(\cM)$.
\end{proof}

\begin{remark}
Let 
$v_1, \dots, v_l \in \tanb_x \cM$,
with $k \leq l$, and note that the map
$(x,y) \mapsto (\extd_x^l \omega_x)(v_1, \dots, v_l)
\in \Ck{0,1}(\cM^2)$, where
$(\extd_x^2\omega_x)(v_1,v_2) = (\extd_x (\extd_x\omega_x)(v_1))(v_2)$. 
Thus, on applying this proposition repeatedly,
we can assert that
$$(\extd_x^l\Dir_x)(v_1,\dots,v_l) = 
	-\divv_\mg ((\extd_x^l\omega_x)(v_1,\dots,v_l)) \conn: \dom(\Lap_\mg) \to \Lp{2}(\cM)$$
is a densely-defined operator.
\end{remark}

Now, we are able to prove the following
product rule for the operator
$\Dir_x$. This product rule is the
essential tool
for obtaining the existence of weak derivatives
$\partial_x \phi_x$ of solutions for \eqref{Def:E}.

\begin{proposition}
Let $x \mapsto u_x: \cN \to \dom(\Lap_\mg)$, $v \in \tanb_x \cM$
and suppose that $(\extd_x u_x)(v)$ exists weakly.
Then  $(\extd_x\Dir_x u_x)(v)$ exists weakly
if and only if $\Dir_x((\extd_x u_x)(v))$ exists weakly
and 
$$ (\extd_x\Dir_x u_x)(v) = (\extd_x\Dir_x)(v) u_x + \Dir_x((\extd_x u_x)(v)).$$
\end{proposition}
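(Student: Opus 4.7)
The plan is to derive a product-rule telescoping of the difference quotient of $\Dir_x u_x$, pair the resulting identity against a dense collection of test functions, and peel off the two contributions using the already-established weak differentiability of the operator family $x \mapsto \Dir_x$ from Proposition \ref{Prop:OpDiff}. For $s$ small enough that $\gamma(s) \in \cN$, Proposition \ref{Prop:ConstDom} guarantees $u_x, u_{\gamma(s)} \in \dom(\Lap_\mg) = \dom(\Dir_z)$ for every $z \in \cM$, so adding and subtracting $\Dir_{\gamma(s)} u_x$ yields the $\Lp{2}$-valid identity
\begin{equation*}
Q^v_s(\Dir_x u_x) = (Q^v_s \Dir_x)\, u_x + \Dir_{\gamma(s)}(Q^v_s u_x).
\end{equation*}

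I would then test against an arbitrary $w \in \dom(\Lap_\mg)$ (dense in $\Sob{1,2}(\cM)$) and pass to the limit term by term. Proposition \ref{Prop:OpDiff} immediately delivers $\inprod{(Q^v_s \Dir_x)\, u_x, w} \to \inprod{(\extd_x \Dir_x)(v)\, u_x, w}$, so the existence of a weak limit for the LHS is equivalent to the existence of $\lim_{s \to 0} \inprod{\Dir_{\gamma(s)}(Q^v_s u_x), w}$, and the two limits differ by exactly $\inprod{(\extd_x \Dir_x)(v)\, u_x, w}$. To identify the remaining limit with $\inprod{\Dir_x((\extd_x u_x)(v)), w}$, I would use the self-adjointness of $\Dir_{\gamma(s)}$ to transpose the operator:
\begin{equation*}
\inprod{\Dir_{\gamma(s)}(Q^v_s u_x), w} = \inprod{Q^v_s u_x, \Dir_{\gamma(s)} w}.
\end{equation*}
By hypothesis $Q^v_s u_x \to (\extd_x u_x)(v)$ weakly in $\Lp{2}(\cM)$, and the formula $\Dir_z w = \omega_z \Lap_\mg w - \mg(\conn w, \conn \omega_z)$ from Proposition \ref{Prop:ConstDom} will yield $\Dir_{\gamma(s)} w \to \Dir_x w$ strongly in $\Lp{2}(\cM)$. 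Weak-strong pairing then gives convergence to $\inprod{(\extd_x u_x)(v), \Dir_x w}$, which by another application of self-adjointness equals $\inprod{\Dir_x((\extd_x u_x)(v)), w}$ precisely when $(\extd_x u_x)(v) \in \dom(\Lap_\mg)$, i.e., precisely when $\Dir_x((\extd_x u_x)(v))$ exists in the weak sense appropriate here.

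The main obstacle is the strong $\Lp{2}$-convergence $\Dir_{\gamma(s)} w \to \Dir_x w$, in particular the summand $\mg(\conn w, \conn(\omega_{\gamma(s)} - \omega_x))$. I would handle this by combining the joint regularity $\omega \in \Ck{0,1}(\cM^2)$ with the $\Ck{1}$-regularity of $x \mapsto \omega_x$ on $\cN$ to obtain $\norm{\omega_{\gamma(s)} - \omega_x}_{\Ck{0,1}(\cM)} = O(s)$: representing the difference as an integral of $(\extd_x \omega_x)(v)$ along $\gamma$ and invoking the uniform $y$-Lipschitz bound on $(\extd_x \omega_x)(v)$ from the proof of Proposition \ref{Prop:OpDiff} gives both uniform norm and $y$-Lipschitz constant of order $O(s)$, hence $\conn(\omega_{\gamma(s)} - \omega_x) \to 0$ in $\Lp{\infty}$. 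The two summands of $\Dir_{\gamma(s)} w - \Dir_x w$ are then controlled by $\norm{\Lap_\mg w}$ and $\norm{\conn w}$ respectively, yielding the required strong convergence and closing the argument.
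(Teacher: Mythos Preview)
Your proof is correct and proceeds via a telescoping that is symmetric to, but distinct from, the paper's. The paper adds and subtracts $\Dir_x u_{\gamma(s)}$, so that the function-variation term carries the \emph{fixed} operator $\Dir_x$; this makes that term immediate (transpose once and use the assumed weak convergence of $Q^v_s u_x$), but forces the operator-variation term to be evaluated at the \emph{moving} point $u_{\gamma(s)}$, which the paper handles by a diagonal interchange-of-limits argument (setting $t=s$ in $\lim_{t\to 0}\lim_{s\to 0}$) that is not fully justified there. You instead add and subtract $\Dir_{\gamma(s)} u_x$, so that the operator-variation term is evaluated at the fixed $u_x$ and follows directly from Proposition~\ref{Prop:OpDiff}; the price is that your function-variation term carries the moving operator $\Dir_{\gamma(s)}$, and you must prove the strong convergence $\Dir_{\gamma(s)} w \to \Dir_x w$ in $\Lp{2}$ to run the weak--strong pairing. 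Your justification of that strong convergence---writing $\omega_{\gamma(s)} - \omega_x$ as an integral of $(\extd_x\omega_{\gamma(\tau)})(\dot\gamma(\tau))$ and invoking the uniform $y$-Lipschitz bound on $(\extd_x\omega_x)(v)$ asserted in the proof of Proposition~\ref{Prop:OpDiff}---is sound within the paper's framework and arguably more transparent than the paper's limit interchange. Both routes deliver the biconditional in the same way, via the self-adjointness of $\Dir_x$ to pass between $\inprod{(\extd_x u_x)(v),\Dir_x w}$ and membership of $(\extd_x u_x)(v)$ in $\dom(\Dir_x)$.
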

\begin{proof}
Fix $w \in \Sob{1,2}(\cM)$ and define
$f(x,y) = \inprod{\Dir_x u_y , w}$.
Let $\gamma:(-\epsilon,\epsilon) \to \cM$ be a curve in $\cM$
satisfying $\gamma(0) = x$ and $\dot{\gamma}(0) = v$.
Now, note that for $s > 0$, 
\begin{equation}
\tag{$\dagger$}
\label{Eq:Lim} 
\frac{1}{s}[f(\gamma(s),\gamma(s)) - f(x,x)] 
	= \frac{1}{s}[f(\gamma(s),\gamma(s)) - f(x,\gamma(s))]  +
		\frac{1}{s}[f(x,\gamma(s)) - f(x,x)]
\end{equation}
By Proposition \ref{Prop:OpDiff}, 
$(\extd_xf(x,y))(v)\rest{y = x}$ exists and 
\begin{align*}
(\extd_xf(x,y))(v)\rest{y = x}
	&= \lim_{s \to 0} \frac{1}{s}[f(\gamma(s), y) - f(x,y)]\rest{y = x}\\
	&= \lim_{t \to 0} \lim_{s \to 0}\frac{1}{s}[f(\gamma(s),\gamma(t)) - f(x,\gamma(t))] \\
	&= \lim_{s \to 0} \frac{1}{s}[f(\gamma(s),\gamma(s)) - f(x,\gamma(s))].
\end{align*}

Assume that 
$ (\extd_x\Dir_x u_x)(v)$ exists
weakly. Then, by \eqref{Eq:Lim}, 
$\lim_{s \to 0} s^{-1} [f(x,\gamma(s)) - f(x,x)]$ exists
and so, by further choosing $w \in \dom(\Lap_\mg)$, 
\begin{multline*}
\lim_{s \to 0}\frac{1}{s}[f(x,\gamma(s)) - f(x,x)] 
= \lim_{s \to 0} \inprod{\frac{1}{s} \Dir_x (u_{x} - u_{\gamma(s)}), w}   \\
= \lim_{s \to 0} \inprod{Q_s^vu_x, \Dir_x w} 
= \inprod{(\extd_x u_x)(v),\Dir_x w},
\end{multline*}
Also, by \eqref{Eq:Lim},
$$\inprod{(\extd_x u_x)(v),\Dir_x w}
	= \inprod{\partial_x(\Dir_x u_x)(v) - (\partial_x\Dir_x)(v) u_x, w},$$
and since the right hand side is continuous in $w$,
we obtain that $(\extd_x u_x)(v) \in \dom(\Dir_x) = \dom(\Lap_\mg)$.

Now, if $\Dir_x(\extd_x u_x)(v)$ exists
weakly, then from \eqref{Eq:Lim},
we are able to assert that
the limit 
$\lim_{s \to 0} s^{-1} [f(\gamma(s),\gamma(s)) - f(x,x)]$
exists, 
which is precisely that
$\extd_x(\Dir_x u_x)(v)$ exists weakly.
The product rule formula is obvious from these computations.
\end{proof}

\begin{remark}
If the function $(x,y) \mapsto \omega_x(y) \in \Ck{k}(\cM^2)$
for $k \geq 1$, then we are able to 
perform this analysis in the uniform operator
topology 
$\bddlf( (\dom(\Lap_\mg), \norm{\mdot}_{\Lap_\mg}), \Lp{2}(\cM))$.
This involves estimating the term 
$\sup_{x,y\in \cM} \modulus{\conn Q_s^vw_x(y)}$
and showing that this quantity tends to $0$
as $s \to 0$. It is clear 
that such an estimate cannot be made
even with the supremum replaced by an
essential supremum when $(x,y) \mapsto \omega_x(y)$
is only Lipschitz.
\end{remark}

\subsection{Regularity of solutions}

We combine the results obtained 
in the previous subsections to prove the following
regularity theorem for solutions
to \eqref{Def:F}. Recalling that $\Div_x = -\divv_{\mgt}(\theta B \omega_x)\conn$, 
we note the following lemma.
 
\begin{lemma}
\label{Lem:Uni}
Let $\int_{\cM} u\ d\mu_\mg = 0$. Then,
$\norm{\Div^{-\frac{1}{2}}_x u} \lesssim \norm{u}$
and  
$\norm{\Div_{x}^{-1}u} \lesssim \norm{u}$, 
where the implicit constants are independent of $x$.
\end{lemma}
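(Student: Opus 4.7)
The strategy is to combine uniform ellipticity of the coefficient tensor $A_x := \theta B \omega_x$ (across $x \in \cM$) with the spectral theory developed in \S\ref{Sec:Linf} for the non-negative self-adjoint operator $\Div_x$ on $\Lp{2}(\cM, \mgt)$; the estimates then drop out of the functional calculus once the spectral gap is controlled uniformly in $x$.

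The first move is to record uniform ellipticity, which is essentially contained in Proposition \ref{Prop:Uni1}: since $\omega \in \Ck{0,1}(\cM^2)$ is strictly positive and $\cM^2$ is compact, we have $\kappa := \min_{\cM^2} \omega > 0$; coupled with the two-sided bounds on $B$ (Proposition \ref{Prop:OpExist}) and on $\theta$ (Proposition \ref{Prop:MeasRep}), this yields constants $\kappa_\pm > 0$ independent of $x$ satisfying $\kappa_- |\xi|^2_{\mgt(y)} \leq \mgt_y(A_x(y)\xi, \xi) \leq \kappa_+ |\xi|^2_{\mgt(y)}$ for almost every $y \in \cM$ and every $\xi \in \tanb_y \cM$. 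Running the spectral analysis of \S\ref{Sec:Linf} for each fixed $x$ then produces the orthogonal splitting $\Lp{2}(\cM, \mgt) = \nul(\conn) \oplus^\perp \close{\ran(\Div_x)}$ together with the spectral gap $\spec(\Div_x^R) \subset [\kappa_- \lambda_1(\cM, \mgt), \infty)$; this lower bound is independent of $x$ because $\kappa_-$ is uniform and $\lambda_1(\cM, \mgt)$ depends only on the fixed smooth background. By the functional calculus for non-negative self-adjoint operators,
\[
\norm{(\Div_x^R)^{-\alpha}}_{\op} \;\leq\; (\kappa_- \lambda_1(\cM, \mgt))^{-\alpha}, \qquad \alpha \in \{\tfrac{1}{2}, 1\},
\]
uniformly in $x$.

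The hypothesis $\int u\, d\mu_\mg = 0$ is what places the appropriate element into $\close{\ran(\Div_x)}$: the change of measure $d\mu_\mg = \theta\,d\mu_{\mgt}$ (Proposition \ref{Prop:MeasRep}) and the identity $\Div_x = \theta \Dir_x$ (Proposition \ref{Prop:Uni1}) mean that the solution $\phi$ of $\Dir_x \phi = u$ granted by Proposition \ref{Prop:EUF} coincides with $(\Div_x^R)^{-1}(\theta u)$, since $\int \theta u\, d\mu_{\mgt} = \int u\, d\mu_\mg = 0$ places $\theta u$ in the range of $\Div_x$. The desired bound $\norm{\Div_x^{-1} u} \lesssim \norm{\theta u} \lesssim \norm{u}$, uniform in $x$, is then immediate from the spectral estimate combined with the $\Lp{\infty}$ bound on $\theta$ and the equivalence of the $\mg$- and $\mgt$-norms via $C$-closeness (Proposition \ref{Prop:RoughP}); the $\alpha = \tfrac{1}{2}$ case is handled identically.

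The only real obstacle is ensuring that $\kappa_-$ is genuinely uniform in $x \in \cM$; this rests on the joint Lipschitz continuity and strict positivity of $\omega$ on the compact product $\cM^2$, without which the spectral gap could degenerate as $x$ varies. Once uniform ellipticity is in hand, the rest is a clean consequence of the spectral theorem.
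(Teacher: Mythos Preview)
Your proposal is correct and follows essentially the same route as the paper: both establish uniform ellipticity of $A_x = \theta B \omega_x$ from compactness and strict positivity of $\omega$, invoke the spectral gap $\spec(\Div_x^R) \subset [\kappa_- \lambda_1(\cM,\mgt),\infty)$ from \S\ref{Sec:Linf}, and then read off the uniform resolvent bounds, passing between the $\mg$- and $\mgt$-mean-zero conditions via $d\mu_\mg = \theta\,d\mu_{\mgt}$. The only cosmetic difference is that you phrase the bound via functional calculus for $(\Div_x^R)^{-\alpha}$, whereas the paper writes out the coercivity inequality $\norm{\sqrt{\Div_x}v} \geq \kappa\lambda_1\norm{v}$ and then substitutes $v = \sqrt{\Div_x}w$.
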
 
\begin{proof}
For this, note that
$\Dir_{x} = \theta^{-1} \Div_{x}$ and that
$$\inprod{\Div_{x} v, v}_{\mgt} \geq \kappa \norm{\conn v}_{\mgt}.$$
If we assume that $\int_{\cM} v\ d\mu_\mgt = 0$, 
then we have by Proposition \ref{Prop:Uni1} that 
$\norm{\sqrt{\Div_{x}} v} \geq \kappa \lambda_1(\cM,\mgt)\norm{v}$
uniformly in $x$. This shows the uniform boundedness 
for $\Div^{-\frac{1}{2}}_x$.  

Next, set $v = \sqrt{\Div_{x}}w$
for $w \in \dom(\Lap_\mg )$, we obtain that 
$\norm{\Div_{x}w}_\mgt \gtrsim \kappa^2 \lambda_1(\cM,\mgt)^2 \norm{w}_\mgt.$
That is, $\norm{\Div_{x}^{-1} w}_\mgt \lesssim \norm{w}_\mgt$.
Now, for $u \in \dom(\Lap_\mg)$ satisfying 
$\int_{\cM} u\ d\mu_\mg = 0$, we have that
$w = \theta u$ satisfies $\int_{\cM} w\ d\mu_\mgt = 0$
and $\Div_{x}^{-1}w = \Div_{x}^{-1}u$ and so 
$\norm{\Div_x^{-1}u}_{\mg} 
	\simeq \norm{\Div_{x}^{-1}w}_{\mgt} 
	\lesssim \norm{w}_{\mgt} = \norm{\theta u}_{\mgt} = \norm{u}_{\mg}.$
\end{proof}

With this tool in hand, we present the following
regularity theorem.

\begin{theorem}
\label{Thm:Reg}
Suppose that $k \geq 1$ and $(x,y) \mapsto \omega_x(y) \in \Ck{0,1}(\cM^2)$
and $x\mapsto \omega_x \in \Ck{k}(\cN)$. 
Moreover, suppose that $(x,y) \mapsto \eta_x(y) \in \Ck{0}(\cN \times \cM)$
and $x \mapsto \eta_x(y) \in \Ck{l}(\cN)$ where $l \geq 1$.
If at  $x \in \cN$,
$\phi_x$ solves \eqref{Def:F} 
with $\int_{\cM} \phi_x\ d\mu_\mg = \int_{\cM} \eta_x\ d\mu_\mg = 0$,
the map $x \mapsto \inprod{\eta_x, \phi_x} \in \Ck{\min\set{k,l}-1,1}(\cN)$.
\end{theorem}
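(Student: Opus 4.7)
Set $m = \min\set{k,l}$ and $F(x) = \inprod{\eta_x,\phi_x}$. My strategy is to iterate the product rule of the preceding proposition together with the uniform resolvent estimate of Lemma \ref{Lem:Uni}, and then to extract the top-order Lipschitz modulus from the joint $\Ck{0,1}$-regularity of $\omega$ and the local Lipschitz continuity of $\eta$ in $x$.

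By Proposition \ref{Prop:Uni1}, transporting the equation to the smooth metric $\mgt$ gives $\Div_x\phi_x = \theta\eta_x$; combined with Lemma \ref{Lem:Uni} and standard duality, I obtain a uniform bound on $\Dir_x^{-1}$ as a map from the dual of the mean-zero subspace of $\Sob{1,2}(\cM)$ back into that subspace. This dual framework is essential: differences
\begin{equation*}
(\Dir_{x'} - \Dir_x)u = -\divv_\mg(\omega_{x'} - \omega_x)\conn u
\end{equation*}
are naturally divergences, and their dual-Sobolev norm is controlled by $\norm{\omega_{x'}-\omega_x}_{\Lp{\infty}}$, which vanishes as $x' \to x$ by $\Ck{0,1}$-regularity, whereas no corresponding $\Lp{2}$-estimate is available since $\norm{\conn(\omega_{x'}-\omega_x)}_{\Lp{\infty}}$ is merely bounded.

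Formally differentiating $\Dir_x\phi_x = \eta_x$ via the product rule and Proposition \ref{Prop:OpDiff} yields
\begin{equation*}
(\extd_x\phi_x)(v) = \Dir_x^{-1}\bigl[(\extd_x\eta_x)(v) - (\extd_x\Dir_x)(v)\phi_x\bigr],
\end{equation*}
and self-adjointness of $\Dir_x$ on the mean-zero subspace gives
\begin{equation*}
(\extd_x F)(v) = 2\inprod{(\extd_x\eta_x)(v),\phi_x} - \inprod{(\extd_x\omega_x)(v)\conn\phi_x,\conn\phi_x}_\mg.
\end{equation*}
Iterating this produces, for $j \leq m$, a finite sum of bilinear pairings of iterated weak $x$-derivatives $\phi^{(\beta)}_x$ that are defined by applying $\Dir_x^{-1}$ to source terms built from lower-order $\phi^{(\beta')}_x$ together with $x$-derivatives of the coefficients. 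The uniform $\Dir_x^{-1}$ bound and the joint continuity of $\extd_x^{(i)}\omega_x$ and $\extd_x^{(i)}\eta_x$ on $\cN\times\cM$ for $i \leq m-1$ make each $\phi^{(\beta)}_x$ continuous in the $\Sob{1,2}$-norm, hence $\extd_x^{(j)}F \in \Ck{0}(\cN)$ for all $j \leq m-1$.

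The main obstacle, and the reason for the factor $m-1$ rather than $m$, is the Lipschitz estimate at the top order, where one cannot afford another classical derivative. For nearby $x, x'$ in a common chart, writing
\begin{equation*}
\phi_x - \phi_{x'} = \Dir_x^{-1}(\eta_x - \eta_{x'}) + \Dir_x^{-1}(\Dir_{x'}-\Dir_x)\phi_{x'}
\end{equation*}
and estimating each piece in the dual-Sobolev topology yields $\norm{\phi_x - \phi_{x'}}_{\Sob{1,2}} \lesssim d(x,x')$, where the $\omega$-difference is handled by $\Ck{0,1}$-regularity and the $\eta$-difference by local Lipschitz continuity in $x$ following from $\eta \in \Ck{l}(\cN) \cap \Ck{0}(\cN\times\cM)$. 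Applying this scheme to each iterated solution $\phi^{(\beta)}_x$ appearing in the expansion of $\extd_x^{(m-1)} F$ produces the required Lipschitz modulus and concludes $F \in \Ck{m-1,1}(\cN)$.
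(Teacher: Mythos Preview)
Your approach is correct but organises the argument differently from the paper. The paper works entirely in the $\Lp{2}/\dom(\Lap_\mg)$ scale: it first shows $(\extd_x\phi_x)(v)$ exists in $\dom(\Lap_\mg)$ as the solution of $\Dir_x(\extd_x\phi_x)(v)=\eta'_{x,v}$, then computes $(\extd_x F)(v)$ via a two-variable product rule applied to $f(x,y)=\inprod{\eta_x,\phi_y}$, and finally bounds this derivative directly by chaining the estimates $\norm{(\extd_x\Dir_x)(v)u}\lesssim\norm{u}_{\Lap_\mg}$ (Proposition~\ref{Prop:OpDiff}) and $\norm{\Div_x^{-1}\theta\eta_x}_{\Lap_\mg}\lesssim\norm{\eta_x}$ (a consequence of Lemma~\ref{Lem:Uni} and Proposition~\ref{Prop:UniBdd}). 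Bounded directional derivatives yield $F\in\Ck{0,1}$, and the bootstrap proceeds by noting that $\eta'_{x,v}$ has the same structural properties as $\eta_x$, so the argument repeats.

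You instead operate in the energy scale $\Sob{-1,2}/\Sob{1,2}$: you estimate operator \emph{differences} $(\Dir_{x'}-\Dir_x)$ as maps $\Sob{1,2}\to\Sob{-1,2}$ with norm $\norm{\omega_{x'}-\omega_x}_\infty$, use a resolvent identity to propagate this to $\norm{\phi_x-\phi_{x'}}_{\Sob{1,2}}$, and exploit self-adjointness to write the cleaner closed formula $(\extd_xF)(v)=2\inprod{(\extd_x\eta_x)(v),\phi_x}-\inprod{(\extd_x\omega_x)(v)\conn\phi_x,\conn\phi_x}$, which the paper does not record. Your route is more variational and never needs the $\Lp{2}$-boundedness of $(\extd_x\Dir_x)(v)$ on $\dom(\Lap_\mg)$, only its natural $\Sob{1,2}\to\Sob{-1,2}$ mapping; the paper's route avoids dual spaces but leans more heavily on the domain identification $\dom(\Dir_x)=\dom(\Lap_\mg)$ in the estimates. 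Both reach $\Ck{m-1,1}$.
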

\begin{proof}
Fix $v \in \tanb_x \cM$ and assuming
$(\extd_x \phi_x)(v)$ exists in $\dom(\Lap_\mg)$,
we obtain that 
$(\extd_x \eta_x)(v) = (\extd_x \Dir_x)(v)\phi_x + \Dir_x (\extd_x \phi_x)(v).$
Thus, on writing 
$$\eta'_{x,v} =  (\extd_x \eta_x)(v) - (\extd_x \Dir_x)(v)\phi_x$$
and 
rearranging the previous expression, we
note that $(\extd_x \phi_x)(v)$ exists
as a solution to 
$
\Dir_x (\extd_x \phi_x)(v) = \eta'_x$.
We further note that this PDE is again of the
form \eqref{Def:F}.

Fix a curve $\gamma:(-\epsilon,\epsilon) \to \cM$ such that
$\gamma(0) = x$ and $\dot{\gamma}(0) = v$. Then,
$$
\int_{\cM} (\extd_x \eta_x)(v)\ d\mu_\mg 
	= \int_{\cM} \ddt\rest{t = 0} \eta_{\gamma(t)}\ d\mu_\mg
	= \ddt\rest{t = 0} \int_{\cM} \eta_{\gamma(t)}\ d\mu_\mg = 0$$
simply by virtue of the
fact that  $\int_{\cM} \eta_x\ d\mu_\mg = 0$ for each $x$.

Next, note that by Proposition \ref{Prop:OpDiff}
$$
\int_{\cM} (\extd_x\Dir_x)(v)\phi_x\ d\mu_\mg
	= \inprod{(\extd_x\Dir_x)(v)\phi_x, 1}_\mg
	= \inprod{(\extd_x \omega_x)(v) \conn \phi_x, \conn(1)}_\mg
	= 0.$$
Thus, we have shown that $\int_{\cM} \eta'_x\ d\mu_\mg = 0$
and hence, by Proposition \ref{Prop:EUF}, we obtain 
that $(\extd_x \phi_x)(v) \in \dom(\Lap_\mg) \subset  \Sob{1,2}(\cM)$ exists.

Next, we show that $x \mapsto \inprod{\eta_x, \phi_x}$ is
differentiable. For that, let us write
$f(x,y) = \inprod{\eta_x, \phi_y}$. By  \eqref{Eq:Lim}, 
we have that
$$ (\extd_xf(x,x))(v) = (\extd_xf(x,y))(v)\rest{y = x} +
	\extd_x(f(y,x))(v)\rest{y = x}$$
when the limits exist.

So, first for the first
expression on the left hand side, 
$$
(\extd_xf(x,y))(v)\rest{y = x}
	= \lim_{s \to 0} \inprod{Q^v_s \eta_x, \phi_x}
	= \inprod{(\extd_x \eta_x)(v), \phi_x}.$$
For the second expression, 
$$
(\extd_xf(y,x))\rest{y = x})
	= \inprod{ \eta_x, (\extd_x \phi_x)(v)}.$$

Now we show that the directional derivative 
is bounded in small neighbourhoods
containing $x$.  So, fix $\Omega \Subset \cN$
a coordinate chart containing $x$ in which 
the vector $v$ is constant in this chart. 
We note that it suffices to show that 
$$\modulus{(\extd_x \inprod{\eta_x, \phi_x})(v)} \lesssim p(\norm{\eta_x}, \norm{(\extd_x \eta_x)(v)}),$$
for a polynomial $p$ since $\norm{\eta_x} \leq C$ and $\norm{(\extd_x \eta_x)(v)}  \leq C$,
where the constant $C$ and the implicit constant
depend on $\Omega$.
This demonstrates that 
$x \mapsto \inprod{\eta_x, \phi_x}$ is continuous
at $x$ with bounded derivatives, 
which in turn implies that this function 
is Lipschitz, and moreover that the differential
exists almost-everywhere.

Recall that by Proposition \ref{Prop:EUF}, 
 $\phi_x = \Div_{x}^{-1}\theta\eta_x + c$,
where $c_x = \fint_{\cM} \Dir_{x}^{-1}\theta\eta_x\ d\mu_\mg$. 
Similarly, $(\extd_x \phi_x)(v) = \Div_{x}^{-1} \theta \eta_{x,v}' + c'$
where $c_x' = \fint_{\cM} \Div_{x}^{-1} \theta\eta_{x,v}'\ d\mu_\mg$.
Hence, 
$$
\modulus{(\extd_xf(x,y))(v)\rest{y = x}} 
	= \modulus{\inprod{ \eta_x, \Div_{x}^{-1}\theta\eta_x + c_x}} 
	\leq \norm{\eta_x} \norm{\Div_x^{-1} \theta\eta_x} + \norm{c_x} \norm{\theta \eta_x}.
$$
Now, note that by Lemma \ref{Lem:Uni},
$\norm{\Div_x^{-1} \theta \eta_x} \lesssim \norm{\eta_x}$, 
where the constant is uniform in $x \in \cM$, and that
$$
\fint_{\cM} \Div_x^{-1} \theta \eta_x\ d\mu_\mg 
	\lesssim \norm{\Div_x^{-1} \theta \eta_x} \lesssim \norm{\eta_x},$$	
which shows that $\norm{c_x} \lesssim \norm{\eta_x}$.
Thus, 
$$\modulus{(\extd_xf(x,y))(v)\rest{y = x}} \lesssim \norm{\eta_x}^2$$
with the constant independent of $x \in \cM$.

We estimate the remaining term,
\begin{align*}
\modulus{(\extd_xf(y,x))\rest{y = x})}
	&= \modulus{\inprod{\eta_x, \Div_{x}^{-1}\theta (\extd_x \eta_x)(v) - \Div_x^{-1} \theta (\extd_x \Div_x)(v)\phi_x) + c_x'}} \\
	&\leq \modulus{\inprod{\eta_x, \Div_{x}^{-1}\theta(\extd_x \eta_x)(v)}} +
		\modulus{\inprod{\eta_x, \Div_x^{-1} \theta (\extd_x \Div_x)(v)\phi_x)}} + \modulus{\inprod{c_x', \eta_x}}.
\end{align*}
Now,
$$\modulus{\inprod{\eta_x, \Div_{x}^{-1}\theta (\extd_x \eta_x)(v)}}
	\lesssim \norm{\eta_x} + \norm{\Div_x^{-1} \theta (\extd_x \eta_x)(v)}
	\lesssim \norm{\eta_x}  + \norm{(\extd_x\eta_x)(v)},$$
and   
\begin{multline*}
\modulus{\inprod{\eta_x, \Div_x^{-1}\theta (\extd_x \Dir_x)(v)[\Div_{x}^{-1}\theta\eta_x + c]}}
	\lesssim  \norm{\eta_x} + \norm{\Div_x^{-1}\theta (\extd_x \Dir_x)(v)\Div_{x}^{-1}\theta\eta_x} \\
	\lesssim \norm{\eta_x} + \norm{(\extd_x \Dir_x)(v)\Div_{x}^{-1}\theta\eta_x},
\end{multline*}  
again by Lemma \eqref{Lem:Uni} where
the implicit constant is independent of $x$ 
since $(\extd_x \Dir_x)(v)c_x = 0$ by the fact that $c_x \in \nul(\conn)$. 
For the last term, note that
$$\norm{c'_x} \lesssim \norm{\eta'_{x,v}}
	\leq \norm{(\extd_x\eta_x)(v)} + \norm{(\extd_x \Dir_x)\Div_x^{-1}\theta \eta_x}.$$

By these calculations, it suffices to
show that $\norm{(\extd_x\Dir_x)(v)\Div_x^{-1} \theta \eta_x} \lesssim \norm{\eta_x}$,
where the implicit constant depends on $\Omega$.
In order to estimate this term, note that  
by Proposition \ref{Prop:OpDiff}
$\norm{(\extd_x\Dir_x)(v) u} \lesssim \norm{\Lap_\mg u} + \norm{u}$
uniformly in $x \in \Omega$,
and therefore, 
\begin{multline*}
\norm{(\extd_x \Dir_x)(v)\Div_{x}^{-1}\theta \eta_x} 
	\lesssim \norm{\Lap_\mg \Div_x^{-1} \theta \eta_x} + \norm{\Div_x^{-1} \theta \eta_x} \\
	\lesssim  \norm{\eta_x} + \norm{\Div_x^{-1} \theta \eta_x} + \norm{\Div_x^{-1}\theta\eta_x}
	\lesssim \norm{\eta_x}.
\end{multline*}

To prove higher differentiability and continuity
for $x \in \cN$, 
it suffices to repeat the argument
upon replacing $\eta_{x,v}'$ and $\omega_x$,
\emph{mutatis mutandis}, to solve for higher
weak derivatives. It is easy to see that 
this procedure can only be repeated
as many times as the minimum of the regularity of $\eta_x$ and
$\omega_x$.
\end{proof}

\section{The flow for rough metrics with Lipschitz kernels}
\label{Sec:Flow}

 In this section, we return back to the study of the main
flow problem \eqref{Def:GM}. For the benefit of the reader,
we recall the governing equations \eqref{Def:E} and \eqref{Def:GM}.

Fix $t > 0$, $x \in \cN$, and $v \in \tanb_x \cM$, 
and recall the following
linear PDE satisfying, for $\phi_{t,x,v} \in \Sob{1,2}(\cM)$,
\begin{equation}
\tag{CE}
\begin{aligned}
&-\divv_\mg ( \hk^\mg_t(x,y) \conn \phi_{t,x,v}(y)) =( \extd_x\hk^\mg_t(x,y))(v) \\
&\int_{\cM} \phi_{t,x,v}(y)\ d\mu_\mg(y) = 0.
\end{aligned} 
\end{equation}
The flow of Gigli-Mantegazza defined 
in \cite{GM} is then given by 
\begin{equation} 
\tag{GM}
\mg_t(u,v)(x) = \int_{\cM} \mg(\conn \phi_{t,x,u}(y), 
	\conn \phi_{t,x,v}(y))\ \hk^\mg_t(x,y)\ d\mu_\mg(y).
\end{equation}

We begin by establishing some a priori facts concerning the heat
kernel of a rough metric.

\subsection{Heat kernels for the rough metric Laplacian}

The Laplacian for a rough metric
is the non-negative self-adjoint operator $\Lap_\mg = -\divv_\mg \conn$,
the operator associated with the real-symmetric form
$J[u,v] = \inprod{\conn u, \conn v}_\mg$. 
To be precise about the results we establish here, 
we recall the defining features of heat kernels. 
We say that $\hk_\mg: \R_+ \times \cM \times \cM$ 
is the heat kernel of $\Lap_\mg$ 
if it is the minimal 
solution $\hk^\mg_t: \cM \times \cM \to \R_{\geq 0}$
to the heat equation 
\begin{equation}
\tag{HK}
\label{Def:HK}
\begin{aligned}
&\partial_t \hk^\mg_t(x,) = \Lap_\mg \hk^\mg_t(x,\mdot) \\
&\lim_{t \to 0} \hk^\mg_t(x,\mdot) = \ddelta_x,
\end{aligned}
\end{equation}
 where $\ddelta_x$ is the Dirac mass at $x \in \cM$,  satisfying 
\begin{equation*}
\hk^\mg_t(x,y) = \hk^\mg_t(y,x),\quad
\hk^\mg_t(x,y) \geq 0,\quad
\text{and}\quad \int_{\cM} \hk^\mg_t(x,y)\ d\mu_\mg(y) = 1.
\end{equation*}
Given an initial $u_0 \in \Lp{2}(\cM)$,
we are able to write
$$
\e^{-t\Lap_\mg}u_0(x) = \int_{\cM} \hk^\mg_t(x,y) u(y)\ d\mu_\mg(y)$$
for almost-every $x \in \cM$.

The following guarantees the regularity properties of the heat kernel 
when it exists.

\begin{theorem}
\label{Thm:HKExist}
If the heat kernel for $\Lap_\mg$ exists, 
then for each $t > 0$, there exists an $\alpha > 0$ such that 
$\hk^\mg_t \in \Ck{\alpha}(\cM^2)$.
\end{theorem}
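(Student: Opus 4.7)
The plan is to reduce the question to the De Giorgi--Nash--Moser H\"older regularity theorem for parabolic equations in divergence form with bounded measurable coefficients. By Proposition~\ref{Prop:OpExist} and Proposition~\ref{Prop:RoughP}, compactness of $\cM$ supplies a smooth reference metric $\mgt$ that is $C$-close to $\mg$, a real-symmetric $\B \in \Lp{\infty}(\Tensors[1,1]\cM)$, and a positive density $\theta = \sqrt{\det \B}$ with $d\mu_\mg = \theta\, d\mu_\mgt$ and
$$ \Lap_\mg \;=\; -\theta^{-1}\,\divv_{\mgt}(\theta \B \conn). $$
Setting $A = \theta\B$, the rough heat equation is equivalent to
$$ \theta\,\partial_s u \;=\; \divv_{\mgt}(A \conn u), $$
a uniformly parabolic equation on the smooth compact Riemannian manifold $(\cM,\mgt)$ with bounded measurable uniformly elliptic coefficients $A$ and bounded uniformly positive weight $\theta$, all of whose structural constants depend only on the $C$-closeness of $\mg$ and $\mgt$ and on $\kappa,\Lambda$.

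For each fixed $x\in \cM$, the function $(s,y)\mapsto \hk^\mg_s(x,y)$ is a weak solution of this parabolic equation on $(0,\infty)\times \cM$. A standard ultracontractivity argument, using the Sobolev embedding on the smooth $(\cM,\mgt)$ together with the equivalence of Sobolev norms in Proposition~\ref{Prop:RoughP}(iii), produces the uniform bound $\sup_{x\in\cM}\norm{\hk^\mg_t(x,\mdot)}_{\Lp{\infty}(\cM)}<\infty$ for each $t>0$. I would then invoke the parabolic De Giorgi--Nash--Moser theorem in a finite atlas of charts for $\mgt$: in each chart the equation becomes a standard divergence-form parabolic equation in Euclidean coordinates with bounded measurable uniformly elliptic coefficients, to which Nash's H\"older estimate applies. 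This yields constants $\alpha \in (0,1)$ and $C=C(t)>0$, depending only on $t$, the ellipticity and boundedness constants, and the smooth geometry of $\mgt$, such that
$$ \modulus{\hk^\mg_t(x,y_1)-\hk^\mg_t(x,y_2)} \;\leq\; C\,\met_{\mgt}(y_1,y_2)^{\alpha} $$
for all $x,y_1,y_2\in \cM$.

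Finally, I would exploit the symmetry $\hk^\mg_t(x,y)=\hk^\mg_t(y,x)$ to obtain the analogous H\"older estimate in the $x$-variable uniformly in $y$, and combine the two via
$$ \modulus{\hk^\mg_t(x_1,y_1)-\hk^\mg_t(x_2,y_2)} \;\leq\; \modulus{\hk^\mg_t(x_1,y_1)-\hk^\mg_t(x_2,y_1)} + \modulus{\hk^\mg_t(x_2,y_1)-\hk^\mg_t(x_2,y_2)}, $$
upgrading the separate H\"older estimates into joint H\"older continuity of $\hk^\mg_t$ on $\cM^2$ with the same exponent $\alpha$. The principal obstacle is ensuring that the H\"older constant produced by De Giorgi--Nash--Moser is uniform in the base point $x$; this is handled by the fact that the ellipticity constants for $A$ and the oscillation bounds on $\theta$ arise globally from the $C$-closeness of $\mg$ and $\mgt$, so the Nash estimates in each chart of a fixed finite atlas of $(\cM,\mgt)$ are controlled by constants independent of $x$.
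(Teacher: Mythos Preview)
Your proposal is correct and is essentially the same approach as the paper's. The paper simply cites Theorem~5.3 of Saloff-Coste \cite{SC} after rewriting $\Lap_\mg$ as a divergence-form operator with bounded measurable coefficients against the smooth background $\mgt$; you have unpacked that citation by sketching the underlying De~Giorgi--Nash--Moser argument in charts and the symmetry/triangle-inequality step to pass from separate to joint H\"older continuity on $\cM^2$.
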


This result is a direct consequence of
Theorem 5.3 in \cite{SC} by Saloff-Coste, by 
rewriting the Laplacian $\Lap_\mg$ as a divergence
form operator with bounded measurable coefficients
against a smooth background.

Before we present
the main theorem regarding
the existence of the metric $\mg_t$ when the initial 
metric $\mg$ is rough, we present the following
two lemmas which allow us to 
assert the non-degeneracy of $\mg_t$, when it exists.

\begin{lemma}[Backward uniqueness of the heat flow]
\label{Lem:HF1}
Let $u_t \in \Lp{2}(\cM)$ be a strict solution to the
heat equation
$ \partial_t u_t = \Lap_{\mg} u_t$
with $\lim_{t \to 0} u(t,x) = \Xi$,
where $\Xi \in \Sob{-1,2}(\cM)$ is 
a distribution. 
If there exists some $t_0 > 0$ such that
$u_{t_0} = 0$, then 
$u_t = 0$ for all $t > 0$ 
and $\lim_{t \to 0} u(t,\mdot) = 0$
in the sense of distributions.
\end{lemma}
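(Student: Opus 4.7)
The plan is to apply spectral decomposition to $\Lap_\mg$ and reduce backward uniqueness to the elementary fact that the exponential is nowhere vanishing.

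First I would establish that $\Lap_\mg$, regarded on $\Lp{2}(\cM)$, is a non-negative self-adjoint operator with compact resolvent, and hence admits a complete orthonormal eigenbasis $\set{\phi_n}_{n \geq 0}$ with eigenvalues $0 = \lambda_0 \leq \lambda_1 \leq \lambda_2 \leq \cdots$ tending to $\infty$. Although the spectral theory of \S\ref{Sec:Linf} was developed for operators $\Div_A$ on a smooth background $\mgt$, Proposition \ref{Prop:RoughP} (vi) expresses $\Lap_\mg = -\uptheta^{-1} \divv_\mgt \uptheta \B \conn$, and by Proposition \ref{Prop:RoughP} (iii) the spaces $\Sob{1,2}(\cM,\mg)$ and $\Lp{2}(\cM,\mg)$ agree up to equivalence of norms with those induced by a $C$-close smooth $\mgt$. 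Thus the compact embedding $\Sob{1,2}(\cM) \hookrightarrow \Lp{2}(\cM)$ used to prove discrete spectrum for $\Div_A$ in \S\ref{Sec:Linf} carries over verbatim to $\Lap_\mg$, yielding the desired eigenbasis.

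Next, for each $t>0$ I would expand $u_t = \sum_n a_n(t) \phi_n$ in $\Lp{2}(\cM)$ with $a_n(t) = \inprod{u_t, \phi_n}_\mg$. Since $u_t$ is a strict solution and $\phi_n \in \dom(\Lap_\mg)$, the map $t \mapsto a_n(t)$ is differentiable on $(0,\infty)$ with
\[
a_n'(t) = \inprod{\partial_t u_t, \phi_n}_\mg = \inprod{\Lap_\mg u_t, \phi_n}_\mg = \inprod{u_t, \Lap_\mg \phi_n}_\mg = \lambda_n a_n(t),
\]
by self-adjointness. Consequently, for all $s, t \in (0,\infty)$,
\[
a_n(t) = a_n(s)\, e^{\lambda_n (t - s)}.
\]

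Assuming $u_{t_0} = 0$ for some $t_0 > 0$ gives $a_n(t_0) = 0$ for every $n$. Since $e^{\lambda_n (t_0 - s)} > 0$ strictly for every $s > 0$, the identity above forces $a_n(s) = 0$ for every $n$ and every $s > 0$, so $u_s \equiv 0$ in $\Lp{2}(\cM)$ for all $s > 0$. In particular, $\inprod{u_s, \varphi} \to 0$ as $s \to 0$ for every $\varphi \in \Ck[c]{\infty}(\cM)$, giving $\lim_{t \to 0} u_t = 0$ distributionally. The only substantive obstacle beyond routine Hilbert-space manipulations is securing the compact-resolvent property, and thereby the eigenbasis, for the rough-metric Laplacian; once available from the reduction outlined above, the remainder is a direct consequence of the non-vanishing of the exponential factor.
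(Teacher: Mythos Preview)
Your argument is correct, but it takes a different route from the paper's. You expand $u_t$ in an orthonormal eigenbasis of $\Lap_\mg$, derive the scalar ODE $a_n'(t) = \lambda_n a_n(t)$ for each Fourier coefficient, and conclude from the non-vanishing of the exponential. The paper instead uses a semigroup halving trick: writing $u_t = \e^{-t\Lap_\mg}v$ for $L^2$ initial data $v$, self-adjointness gives $\inprod{v, \e^{-t_0\Lap_\mg}v} = \norm{\e^{-\frac{1}{2}t_0\Lap_\mg}v}^2$, so $u_{t_0}=0$ forces $\e^{-\frac{1}{2}t_0\Lap_\mg}v=0$; iterating, $\e^{-2^{-n}t_0\Lap_\mg}v = 0$ for all $n$, and strong continuity of the semigroup at $0$ yields $v=0$. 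The general distributional case is then handled by shifting time. The paper's argument avoids any appeal to discreteness of the spectrum or compactness of the resolvent---it needs only self-adjointness and the $C_0$-semigroup property---so it is in principle more general. Your approach, on the other hand, is entirely adequate in the present compact setting (where the spectral facts from \S\ref{Sec:Linf} indeed transfer to $\Lap_\mg$ as you indicate) and has the virtue of making the mechanism completely explicit.
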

\begin{proof}
First, suppose that $\Xi = v \in \Lp{2}(\cM)$.
Then, $u(t,x) = \e^{-t\Lap_{\mg}}v$
and we note that
$$\inprod{v, \e^{-t\Lap_{\mg}}v}_\mg
	= \inprod{v, \e^{-\frac{1}{2}t\Lap_{\mg}} \e^{-\frac{1}{2}t\Lap_{\mg}} v}_\mg
	= \inprod{\e^{-\frac{1}{2}t\Lap_{\mg}}v, \e^{-\frac{1}{2}t\Lap_{\mg}} v}_\mg
	= \norm{\e^{-\frac{1}{2}t\Lap_{\mg}}v},$$
where the second equality follows by the self-adjointness of $\Lap_\mg$.
Thus, at $t = t_0$, we obtain that
$\norm{\e^{-\frac{1}{2}t_0 \Lap_{\mg}}v} = 0$
and by induction,
$\e^{\frac{1}{2^n}t_0 \Lap_{\mg}v} = 0$.
Hence,
$$
v = \lim_{t \to 0} \e^{-t\Lap_{\mg}}v 
	= \lim_{n \to \infty} \e^{\frac{1}{2^n}t_0 \Lap_{\mg}v} = 0.$$

Now, for the case of an arbitrary distribution $\Xi$,
we note that for $s > 0$, 
$u_{t+s} = \e^{-t\Lap_{\mg}}u_s$ and therefore,
applying our previous argument with $v = u_s$, 
we obtain that $u_s = 0$ for every $s > 0$.
Now, fix $f \in \Ck[c]{\infty}(\cM)$, a test function,
and since $\inprod{\mdot,\mdot}$ extends continuously
to a pairing $\Sob{-1,2}(\cM) \times \Ck[c]{\infty}(\cM)$,
we obtain that 
$$
0 = \lim_{t \to 0} \int_{\cM} u_t(x) f(x)\ d\mu_\mg(x).$$
That is, $\Xi = 0$.
\end{proof}
 
Also, we have the following.

\begin{lemma}
\label{Lem:HF2}
The function $y \mapsto \hk^\mg_t(x,y) \in \dom(\Lap_\mg)$
and for all $t > 0$, $\partial_t \hk^\mg_t(x,\mdot) = \Lap_{\mg} \hk^\mg_t(x,\mdot)$
for each $x \in \cM$. 
If $\emptyset \neq \cN$ is an open subset 
on which $(x,y) \mapsto \hk^\mg_t \in \Ck{k}(\cN^2)$ (for $k \geq 1$), 
then for every $x \in \cN$ and $v \in \tanb_x \cM$, 
$y \mapsto (\extd_x\hk^\mg_t(x,y))(v)$ solves
$$ \partial_t (\extd_x\hk^\mg_t(x,\mdot))(v) 
	= \Lap_{\mg} (\extd_x\hk^\mg_t(x,\mdot))(v),
\qquad \lim_{t \to 0} (\extd_x\hk^\mg_t(x,\mdot))(v) = D_{x,v},
$$
where $D_{x,v} \in \Sob{-1,2}(\cM)$ is given by $D_{x,v}f = (\extd_x f)(v)$.
\end{lemma}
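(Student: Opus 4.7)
The plan is to reduce both assertions to properties of the heat semigroup $t \mapsto \e^{-t\Lap_\mg}$ on $\Lp{2}(\cM)$.

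For the first assertion, I fix $x \in \cM$ and any $0 < s < t$. By Theorem \ref{Thm:HKExist}, $y \mapsto \hk^\mg_s(x,y)$ is H\"older continuous on the compact manifold $\cM$ and hence lies in $\Lp{2}(\cM)$. The semigroup identity
$$\hk^\mg_t(x,\mdot) = \e^{-(t-s)\Lap_\mg}\hk^\mg_s(x,\mdot),$$
combined with the functional calculus for the non-negative self-adjoint operator $\Lap_\mg$, places $\hk^\mg_t(x,\mdot)$ inside $\bigcap_{k \geq 0} \dom(\Lap_\mg^k)$, and in particular in $\dom(\Lap_\mg)$. The heat equation in $t$ then follows from standard differentiation of the semigroup orbit $t \mapsto \e^{-t\Lap_\mg}v$.

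For the second assertion, I fix $x \in \cN$, $v \in \tanb_x \cM$, and a curve $\gamma:(-\epsilon,\epsilon) \to \cN$ with $\gamma(0) = x$ and $\dot\gamma(0) = v$. Each difference quotient
$$u_t^s(\mdot) := \tfrac{1}{s}\bigl[\hk^\mg_t(\gamma(s),\mdot) - \hk^\mg_t(x,\mdot)\bigr]$$
is an $\Lp{2}$-valued solution of the heat equation by linearity. Testing against $f \in \Ck[c]{\infty}(\cM)$ converts the pairing to
$$\inprod{u_t^s,f}_\mg = \frac{(\e^{-t\Lap_\mg}f)(\gamma(s)) - (\e^{-t\Lap_\mg}f)(x)}{s},$$
which converges as $s \to 0$ to $D_{x,v}(\e^{-t\Lap_\mg}f)$, because $\e^{-t\Lap_\mg}f \in \Ck{k}(\cN)$ by the $\Ck{k}$-regularity of $\hk^\mg_t$ on $\cN^2$ (using the global Lipschitz bound on $\hk^\mg_t$ to justify differentiation under the integral over $\cM \setminus \cN$). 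I then define $u_t[f] := D_{x,v}(\e^{-t\Lap_\mg}f)$ as a distribution; the pairing identity $u_t[f] = u_{t_0}[\e^{-(t-t_0)\Lap_\mg}f]$ for any $0 < t_0 < t$ promotes $u_t$ to $u_t = \e^{-(t-t_0)\Lap_\mg}u_{t_0} \in \bigcap_{k \geq 0}\dom(\Lap_\mg^k)$, and differentiating this identity in $t$ delivers the heat equation for $u_t$. The initial condition
$$\lim_{t \to 0^+}u_t[f] = \lim_{t \to 0^+}D_{x,v}(\e^{-t\Lap_\mg}f) = D_{x,v}(f)$$
follows from strong continuity of the semigroup in a $\Ck{1}$ topology near $x$ (supplied by the heat kernel's interior regularity on $\cN$). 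On $\cN$, the function $u_t$ agrees pointwise with $(\extd_x\hk^\mg_t(x,\mdot))(v)$ because the classical partial derivative exists there and is captured by this pairing.

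The main obstacle is this last upgrade: passing from the weak limit $u_t$ of difference quotients to an honest $\Lp{2}$ function on all of $\cM$, and reconciling it with $(\extd_x\hk^\mg_t(x,\mdot))(v)$ globally. Pointwise convergence of $u_t^s$ is available on $\cN$ from the $\Ck{k}$ hypothesis, but global $\Lp{2}$ convergence cannot be extracted from dominated convergence alone, since the heat kernel's $x$-regularity outside $\cN$ is only Lipschitz. The resolution is the interior smoothing of $\e^{-(t-t_0)\Lap_\mg}$ at any intermediate time $t_0 \in (0,t)$, which recasts $u_t$ as the semigroup image of a distribution of finite negative Sobolev order and thereby absorbs the irregularity of $\extd_x\hk^\mg_t$ off $\cN$ into the regularising action of the semigroup.
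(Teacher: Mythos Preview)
Your approach is correct in outline but takes a more elaborate route than the paper. The paper argues directly: rather than passing through difference quotients and then upgrading a weak limit via semigroup smoothing, it fixes $u \in \dom(\Lap_\mg)$ and computes
\[
\inprod{(\extd_x\Lap_\mg\hk^\mg_t(x,\cdot))(v),u}_\mg
= \ddt[s]\Big\rvert_{s=0}\inprod{\hk^\mg_t(\gamma(s),\cdot),\Lap_\mg u}_\mg
= \inprod{(\extd_x\hk^\mg_t(x,\cdot))(v), \Lap_\mg u}_\mg
\]
by interchanging $d/ds$ with the integral and using self-adjointness of $\Lap_\mg$. This single computation simultaneously shows that $(\extd_x\hk^\mg_t(x,\cdot))(v) \in \dom(\Lap_\mg)$ (the right-hand pairing is $\Lp{2}$-continuous in $u$) and that $\Lap_\mg(\extd_x\hk^\mg_t)(v) = (\extd_x \Lap_\mg \hk^\mg_t)(v) = \partial_t(\extd_x\hk^\mg_t)(v)$. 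The initial condition is handled the same way, interchanging $\lim_{t\to 0}$ with $d/ds$ against a test function $f \in \Ck[c]{\infty}(\cM)$.

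The ``main obstacle'' you identify --- promoting the weak limit of difference quotients to an honest $\Lp{2}$ function --- simply does not arise in the paper's argument, because self-adjointness delivers domain membership in one stroke without ever treating $u_t$ as a distribution. In fact your own obstacle is somewhat artificial even within your framework: once you invoke the global Lipschitz bound (which, as you note, is ambient in this section though not in the lemma's hypotheses), the function $y \mapsto (\extd_x\hk^\mg_t(x,y))(v)$ is bounded on the compact manifold and hence already in $\Lp{2}$, so no negative-Sobolev smoothing is needed. Your semigroup-theoretic route would generalise better to non-self-adjoint generators, but here the paper's direct pairing argument is both shorter and avoids the bookkeeping of upgrading regularity. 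Both proofs lean on the same implicit use of global $x$-Lipschitz regularity to justify differentiating under the $y$-integral over all of $\cM$, not just $\cN$.
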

\begin{proof}
The fact that $x \mapsto \hk^\mg_t(x,y) \in \dom(\Lap_\mg)$
and $\partial_t \hk^\mg_t(x,\mdot) = \Lap_{\mg} \hk^\mg_t(x,\mdot)$
for $t > 0$ is by definition that $\hk^\mg_t: \cM^2 \to \R_+$
is the fundamental solution to the heat equation.

First, note that
$$
\partial_t (\extd_x\hk^\mg_t(x,\mdot))(v)
	= \extd_x (\partial_t \hk^\mg_t(x,\mdot))(v)
	= \extd_x (\Lap_\mg \hk^\mg_t(x,\mdot))(v).$$
Now, fix $u \in \dom(\Lap_\mg)$ and, 
fix a curve $\gamma:(-\epsilon,\epsilon) \to \cN$
such that $\gamma(0) = x$, $\dot{\gamma}(0) = v$, 
and observe that 
\begin{align*}
\inprod{(\extd_x\Lap_\mg\hk^\mg_t(x,\mdot))(v),u}_\mg
	&= \int_{\cM} (\extd_x\Lap_\mg\hk^\mg_t(x,y))(v)u(y)\ d\mu_\mg(y) \\
	&= \int_{\cM} \ddt[s]\rest{s=0} \Lap_\mg\hk^\mg_t(\gamma(s),y) u(y)\ d\mu_\mg(y) \\
	&= \ddt[s]\rest{s=0} \int_{\cM} \Lap_\mg\hk^\mg_t(\gamma(s),y) u(y)\ d\mu_\mg(y) \\
	&= \ddt[s]\rest{s=0} \int_{\cM} \hk^\mg_t(\gamma(s),y) \Lap_\mg u(y)\ d\mu_\mg(y) \\
	&= \int_{\cM} \ddt[s]\rest{s=0}  \hk^\mg_t(\gamma(s),y) \Lap_\mg u(y)\ d\mu_\mg(y) \\
	&= \inprod{(\extd_x\hk^\mg_t(x,\mdot))(v), \Lap_\mg u}_\mg.
\end{align*}
This shows that $u \mapsto \inprod{(\extd_x\hk^\mg_t(x,\mdot))(v), \Lap_\mg u} = 
\inprod{(\extd_x\Lap_\mg\hk^\mg_t(x,\mdot))(v),u}_\mg$
is continuous in $u$ and hence $(\extd_x\hk^\mg_t(x,\mdot))(v) \in \dom(\Lap_\mg)$
and by a similar calculation,
$$ \inprod{\partial_t (\extd_x\Lap_\mg\hk^\mg_t(x,\mdot))(v), u}_\mg 
	= \inprod{\Lap_\mg (\extd_x\hk^\mg_t(x,\mdot))(v),u}_\mg.$$
Since $\dom(\Lap_\mg)$ is dense in $\Lp{2}(\cM)$, 
we obtain that $(\extd_x\Lap_\mg\hk^\mg_t(x,\mdot))(v)$ solves
the heat equation.

Now, fix $f \in \Ck[c]{\infty}(\cM)$. Then,
\begin{align*}
\lim_{t \to 0} \int_{\cM} (\extd_x\hk^\mg_t(x,y))(v) f(y)\ d\mu_\mg(y)
	&= \lim_{t \to 0} \int_{\cM} \ddt[s]\rest{s =0} \hk^\mg_t(\gamma(s),y)f(y)\ d\mu_\mg(y) \\
	&= \lim_{t \to 0} \ddt[s]\rest{s = 0} \int_{\cM} \hk^\mg_t(\gamma(s),y)f(y)\ d\mu_\mg(y) \\
	&= \ddt[s]\rest{s = 0} \lim_{t \to 0}  \int_{\cM} \hk^\mg_t(\gamma(s),y)f(y)\ d\mu_\mg(y) \\
	&= \ddt[s]\rest{s = 0} f(\gamma(s)) \\
	&= (\extd_x f)(v).
\end{align*}
\end{proof}

In order to apply the elliptic tools we've described in the
previous sections, we need to assert 
that $\hk^\mg_t(x,y) > 0$. This
is the content of the following lemma. 

\begin{lemma}
\label{Lem:HKBd}
For each $t > 0$, there exist $0 < \kappa_t, \Lambda_t < \infty$ such that
$\kappa_t \leq \hk^\mg_t(x,y) \leq \Lambda_t$.
\end{lemma}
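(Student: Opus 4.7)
My plan is to establish the upper and lower bounds separately, exploiting the continuity of $\hk_t^\mg$ on the compact product $\cM \times \cM$ guaranteed by Theorem \ref{Thm:HKExist}. The upper bound is essentially immediate: since $\hk_t^\mg \in \Ck{\alpha}(\cM^2)$ and $\cM^2$ is compact, the function attains its maximum, and we set $\Lambda_t := \max_{(x,y) \in \cM^2} \hk_t^\mg(x,y) < \infty$. This paragraph requires no analytic input beyond what has already been established.

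The lower bound is the substantive part of the argument, and the natural route is to appeal to the Gaussian-type lower bounds due to Saloff-Coste \cite{SC}. By Proposition \ref{Prop:RoughP} (vi), the rough Laplacian may be rewritten as $\Lap_{\mg} = -\uptheta^{-1}\divv_{\mgt}\uptheta\B\conn$ against a smooth background metric $\mgt$, where $\B \in \Lp{\infty}$ is uniformly elliptic and $\uptheta$ is a bounded positive weight by Proposition \ref{Prop:MeasRep}. This places us squarely in the setting of divergence-form operators with bounded measurable coefficients on a smooth compact manifold, for which Saloff-Coste's machinery provides a Moser-type parabolic Harnack inequality and a Gaussian lower bound of the form
\[
\hk_t^\mg(x,y) \;\geq\; \frac{c_1}{V(\sqrt{t})}\,\exp\!\left(-c_2\,\frac{\met_\mg(x,y)^2}{t}\right),
\]
valid for $x,y \in \cM$ and $t$ in any bounded interval. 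Since $\cM$ is compact, $\met_\mg(x,y) \leq \diam(\cM,\mg) < \infty$ and $V(\sqrt{t}) \leq \mu_\mg(\cM) < \infty$, so the right-hand side is bounded below by a positive constant depending only on $t$. Taking $\kappa_t$ to be this constant yields the desired uniform lower bound.

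The main (only) obstacle is justifying the invocation of a strong maximum principle or quantitative Harnack inequality for the rough Laplacian; once one is willing to cite Saloff-Coste, the argument reduces to the observations above. An alternative, entirely self-contained but more laborious, route would be to first establish pointwise strict positivity via backward uniqueness (Lemma \ref{Lem:HF1}) applied to $u(t,y) = \hk_t^\mg(x_0,y)$ together with the semigroup identity
\[
\hk_{2t}^\mg(x,z) \;=\; \int_\cM \hk_t^\mg(x,y)\,\hk_t^\mg(y,z)\,d\mu_\mg(y),
\]
and then upgrade pointwise positivity to a uniform bound by continuity and compactness of $\cM \times \cM$. Either route suffices; the Saloff-Coste citation is the most economical, and given that \cite{SC} is already used to prove Theorem \ref{Thm:HKExist}, it seems to be the intended reference.
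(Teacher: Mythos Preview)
Your primary route via Saloff-Coste's Gaussian lower bound is correct and genuinely different from the paper's argument. The paper instead cites Theorem~5.2.1 of Davies \cite{Davies} to obtain the \emph{pointwise} strict positivity $\hk^\mg_t(x,y) > 0$ for all $x,y \in \cM$, and then simply couples this with continuity of $\hk^\mg_t$ (from Theorem~\ref{Thm:HKExist}) and compactness of $\cM^2$ to conclude that the infimum is a minimum, hence strictly positive. Your approach is more quantitative and bypasses the positivity-plus-compactness step entirely; the paper's is shorter but relies on an additional external reference. One small caveat on your route: you write $\met_\mg(x,y) \leq \diam(\cM,\mg)$, but a general rough metric need not induce a length structure, so the distance appearing in Saloff-Coste's bound should be taken with respect to the smooth comparison metric $\mgt$ (this changes nothing, since $\diam(\cM,\mgt) < \infty$ by compactness).

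Your alternative route B is closer in spirit to the paper, but the invocation of backward uniqueness (Lemma~\ref{Lem:HF1}) is not the right tool for establishing \emph{strict} positivity: that lemma tells you that if $\hk^\mg_{t_0}(x_0,\cdot) \equiv 0$ then the initial datum vanishes, which rules out $\hk^\mg_t(x_0,\cdot)$ being identically zero but does not by itself prevent $\hk^\mg_t(x_0,y_0) = 0$ at a single point. The standard argument (as in Davies) uses the semigroup identity you wrote together with non-negativity and an irreducibility/connectedness argument; backward uniqueness is not needed.
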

\begin{proof}
Arguing as in Theorem 5.2.1 in \cite{Davies} by Davies, 
we obtain that $\hk^\mg_t(x,y) > 0$ for all $x, y \in \cM$.
By the compactness of $\cM$ and since $\hk^\mg_t$ is at least continuous,
 we obtain that 
\begin{align*}
&\kappa_1 = \inf_{x,y \in \cM} \hk^\mg_t(x,y) = \min_{x,y\in \cM} \hk^\mg_t(x,y) > 0,\ \text{and}\\
&\Lambda_1 = \sup_{x,y \in \cM} \hk^\mg_t(x,y) = \max_{x,y \in \cM} \hk^\mg_t(x,y) < \infty.
\end{align*}
This completes the proof. 
\end{proof}

\subsection{The flow}
\label{Sec:GMFlow}

We collate the results we have obtained
so far and present the following 
existence and regularity theorem for $\mg_t$.
Recall that in the hypothesis of Theorem \ref{Thm:GMFlow}, 
we assume that $(x,y) \mapsto \hk^\mg_t(x,y) \in \Ck{0,1}(\cM^2)$
and that $(x,y) \mapsto \hk^\mg_t(x,y) \in \Ck{k}(\cN^2)$. 

\begin{proof}[Proof of Theorem \ref{Thm:GMFlow}]
First, we show that for each $t > 0$, $x \in \cM$ and $v \in \tanb_x\cM$,
there exists a unique $\phi_{t,x,v} \in \Sob{1,2}(\cM)$
which solves \eqref{Def:E}.
By Lemma \ref{Lem:HKBd}, we are able
to apply Proposition \ref{Prop:EUF}
on setting $\omega_x(y) = \omega(x,y) = \hk^\mg_t(x,y)$
and $\eta(y) = \extd_x(\hk^\mg_t(x,y))(v)$.

The only thing that needs to be to be checked is
that $\int_{\cM} \eta\ d\mu_\mg = 0$. 
In order to do so, let $\gamma: I \to \cM$ be a curve
so that $\gamma(0) = x$ and $\dot{\gamma}(0) = v$.
Then, on noting that 
$$(\extd_x(\hk^\mg_t(x,y)))(v) = \ddt[s]\rest{s=0} \hk^\mg_t(\gamma(s),y),$$
we compute 
\begin{multline*}
\int_{\cM} \ddt[s]\rest{s=0} \hk^\mg_t(\gamma(s),y)\ d\mu_\mg
	= \int_{\cM} \ddt[s]\rest{s=0} \hk^\mg_t(\gamma(s),y)\ d\mu_\mg \\
	= \ddt[s]\rest{s=0} {\int_\cM \hk^\mg_t(\gamma(s),y)\ d\mu_\mg}
	= \ddt[s]\rest{s= 0} 1 = 0,
\end{multline*}
where in the second line,
we have used the dominated convergence theorem to interchange
the integral and the limit involved in differentiation.
Thus, on invoking Proposition \ref{Prop:EUF}, we obtain 
a unique solution $\phi_{t,x,v} \in \Sob{1,2}(\cM)$
with $\int_{\cM} \phi_{t,x,v}\ d\mu_{\mg} = 0$.
It is easy to see then that $\mg_t$ is symmetric
at each $x$. 

Next, we show that $\conn \phi_{t,x,v} = 0$
if and only if $v = 0$.
Fix $v \neq 0$, and
recall Lemmas \ref{Lem:HF1} and \ref{Lem:HF2}
to conclude that $(\extd_x\hk^\mg_t(x,\mdot))(v) \neq 0$
since $\lim_{t \to 0} (\extd_x\hk^\mg_t(x,\mdot))(v) = D_{x,v} \neq 0$.
Since the solution provided by Proposition
\ref{Prop:EUDiv} is obtained by
inverting the one-one operator $\Div_A^R$ 
in Proposition \ref{Prop:EUDiv},
we must have that $\psi_{t,x,v} \not\in \nul(\Div_A^R) = \nul(\conn)$.
It is easy to see that if $v = 0$, then 
$(\extd_x\hk^\mg_t(x,\mdot))(v) = 0$ and hence, 
$\phi_{t,x,v} = 0$.
This shows that $\mg_t(u,u)(x) \geq 0$ and 
$\mg_t(u,u)(x) = 0$ if and only if $u = 0$.
That is, $\mg_t$ is non-degenerate.

Furthermore, it is easy to see that,
for $\alpha \neq 0$, 
$\alpha \phi_{t,x,v}$ solves
the equation \eqref{Def:E} with 
source term $(\extd_x\hk^\mg_t(x,\mdot))(\alpha v)$
and that, by linearity of
the equation \eqref{Def:E},
$\phi_{t,x,v} + \phi_{t,x,u}$
solves \eqref{Def:E} with
source term $(\extd_x\hk^\mg_t(x,\mdot))(u + v)$.
Hence, 
$\alpha \phi_{t,x,v} = \phi_{t,x,\alpha v}$
and $\phi_{t,x,v} + \phi_{t,x,u} = \phi_{t,x,u+v}$.
That is, 
$\mg_t(\alpha u,v)(x) = \alpha \mg_t(u,v)(x)$
and $\mg_t(u+ v, w)(x) = \mg_t(u,w)(x) + \mg_t(v,w)(x)$.
Thus, $\mg_t$ is linear in the first variable.

By combining symmetry, non-degeneracy, and linearity
in the first variable shows that $\mg_t(x): \tanb_x\cM \times \tanb_x \cM \to \R_{>0}$
defines an inner product on $\tanb_x\cM$ and
hence, a Riemannian metric.

Regularity is then a simple consequence of Theorem \ref{Thm:Reg}
since 
$\modulus{u}_{\mg_t(x)}^2 = \inprod{\eta_{x,u}, \phi_{x,u}}$, 
and the same regularity can be obtained for $x \mapsto \mg_t(u,v)(x)$
via polarisation.
\end{proof}

\section{Regularity of the flow for sufficiently smooth metrics}
\label{Sec:HKReg}

 In \cite{GM}, the authors demonstrate that the flow 
$\mg_t$ is smooth  for all positive times when starting with a smooth initial metric. 
We demonstrate  a similar result but when the initial metric is
assumed  to be $\Ck{k,\alpha}$, where $k \geq 1$.
Our approach is to demonstrate that we are able to localise
our weak solutions and then apply Schauder theory to 
obtain higher ($k+1$) regularity for the heat kernel $\hk_t^\mg$.
On applying  Theorem \ref{Thm:Reg}, 
we are able to assert that $\mg_t$ remains $\Ck{k}$.

\subsection{Higher regularity of the flow
for $\Ck{1}$ heat kernels}
\label{Sec:HighReg}

First, we demonstrate that for a  heat kernel that is $\Ck{1}$ everywhere, 
the regularity theorem (see Theorem \ref{Thm:Reg}) improves
from $\Ck{k-1,1}$ to $\Ck{k}$.

Recall that
$\Div_x u = \divv_\mgt (B \theta \omega_x)\conn u$.
We estimate the difference between such operators.
We fix $f: \cM \times \cM \to \R$ differentiable
with $-\Lambda \leq f(x,y) \leq \Lambda$
 where $\Lambda > 0$ and for $x,y \in U$, where $U$ is an open set.
Define $\Xi:U \times U \to \R_{\geq 0}$
by
$$\Xi(x,y) = \norm{f_x - f_y}_\infty + \norm{\conn(f_x - f_y)}_\infty,$$
where $f_x = f(x,\mdot)$.

\begin{lemma}
\label{Lem:OpDiff}
Let $(x,y) \mapsto f_x(y) \in \Ck{1}(\cM^2)$
and let $\Lambda > 0$ such that
$ -\Lambda \leq f_x(y) \leq \Lambda$
for $x \in U$, where $U$ is an open set
and all $y \in \cM$.
Define
$T_x u = -\divv_\mg f_x \conn u$ with domain
$\dom(T_x) = \dom(\Lap_\mg)$. Then, 
whenever $u \in \dom(\Lap_\mg)$, 
$$ \norm{T_x u - T_y u} \lesssim \Xi(x,y) \norm{u}_{\Lap_\mg}.$$
whenever $x, y \in U$ and where the
implicit constant depends on $U$.
\end{lemma}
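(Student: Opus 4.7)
The plan is to reduce the difference $T_x u - T_y u$ to a pointwise expression using the product-rule formula for $T_x$ established in Proposition \ref{Prop:ConstDom}, and then estimate the resulting two terms separately using the sup-norm control provided by $\Xi(x,y)$.

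Concretely, since $(x,y)\mapsto f_x(y) \in \Ck{1}(\cM^2)$ and $f_x$ has sup-norm controlled by $\Lambda$, the argument of Proposition \ref{Prop:ConstDom} applies with $f_x$ in place of $\omega_x$ (uniformly in $x\in U$), giving $\dom(T_x) = \dom(\Lap_\mg)$ and
\begin{equation*}
T_x u \;=\; f_x \Lap_\mg u \;-\; \mg(\conn u, \conn f_x),
\qquad u \in \dom(\Lap_\mg).
\end{equation*}
Subtracting the analogous expression for $T_y u$ at $y \in U$ gives
\begin{equation*}
T_x u - T_y u \;=\; (f_x - f_y)\Lap_\mg u \;-\; \mg(\conn u, \conn(f_x - f_y)).
\end{equation*}

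Taking $\Lp{2}(\cM,\mg)$-norms and applying H\"older pointwise, I would estimate
\begin{equation*}
\norm{T_x u - T_y u} \;\leq\; \norm{f_x - f_y}_\infty \norm{\Lap_\mg u} \;+\; \norm{\conn(f_x - f_y)}_\infty \norm{\conn u}.
\end{equation*}
The first factor is already bounded by $\Xi(x,y)\norm{u}_{\Lap_\mg}$. For the second term I invoke the gradient estimate used in the proof of Proposition \ref{Prop:UniBdd}: by ellipticity $\norm{\conn u}^2 \leq \inprod{\Lap_\mg u, u} \leq \norm{\Lap_\mg u}\norm{u} \leq \norm{u}_{\Lap_\mg}^2$, so $\norm{\conn u} \lesssim \norm{u}_{\Lap_\mg}$. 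Combining the two contributions yields exactly $\norm{T_x u - T_y u} \lesssim \Xi(x,y)\norm{u}_{\Lap_\mg}$ with an implicit constant depending only on $U$ (through the constants in the gradient estimate and the $\Ck{1}$-bound on $f$).

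There is no real obstacle here; the only subtlety is checking that the product-rule identity of Proposition \ref{Prop:ConstDom} is genuinely applicable to $f_x$, which requires that $f_x$ is Lipschitz in $y$ uniformly for $x \in U$. This is immediate from the assumption $(x,y)\mapsto f_x(y)\in\Ck{1}(\cM^2)$ together with compactness of $\cM$, so the argument proceeds cleanly.
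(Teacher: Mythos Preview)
Your proof is correct and follows essentially the same route as the paper: reduce to the pointwise identity $T_x u = f_x \Lap_\mg u - \mg(\conn u, \conn f_x)$, subtract, and bound the two resulting terms by $\Xi(x,y)\norm{u}_{\Lap_\mg}$ using $\norm{\conn u} \lesssim \norm{u}_{\Lap_\mg}$.

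The one difference worth noting is how the product formula is justified. Since $f_x$ can take negative values, Proposition~\ref{Prop:ConstDom} does not apply verbatim: its proof of the domain equality $\dom(\Dir_x)=\dom(\Lap_\mg)$ uses that $\omega_x$ is bounded below by a positive constant. The paper handles this by first shifting to $F_x = f_x + 2\Lambda \geq \Lambda > 0$, applying Proposition~\ref{Prop:ConstDom} to $S_x = -\divv_\mg F_x \conn$, and then observing $T_x u - T_y u = S_x u - S_y u$ because the shift cancels. Your direct route is also fine, since all you actually need is the formula on $\dom(\Lap_\mg)$ (equivalently the inclusion $\dom(\Lap_\mg)\subset\dom(T_x)$), and that part of the argument in Proposition~\ref{Prop:ConstDom} does not use positivity; but you should say so explicitly rather than invoking the proposition wholesale.
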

\begin{proof}
Define $F_x = f_x + 2\Lambda$ and it 
follows that $\Lambda \leq F_x \leq 3\Lambda$
on $U \times \cM$.
On setting $S_x u = -\divv_\mg F_x \conn u$, by Proposition \ref{Prop:ConstDom}, 
we obtain that $\dom(S_x) = \dom(\Lap_\mg)$
and $S_x u = F_x \Lap_\mg u + \mg(\conn u, \conn F_x)$.
It is easy to check that 
$T_x u = S_x u - 2\Lambda \Lap_\mg u$ and therefore, 
$T_x u - T_y u = S_x u - S_y u$. 

We compute, 
$$
\norm{S_x u - S_y u} 
	\leq \norm{(F_x - F_y) \Lap_\mg u} + \norm{\mg(\conn u, \conn (F_x - F_y)},$$
but $F_x - F_y = f_x - f_y$, and hence, it follows that
$$\norm{T_x u - T_y u} 
	\leq \norm{f_x - f_y}_{\infty} \norm{\Lap_\mg u}  + \norm{\conn(f_x - f_y)}_\infty \norm{\conn u}.$$
The estimate 
$ \norm{\conn u} \lesssim \norm{u}_{\Lap_\mg}$ is trivial, and so 
the claim is proved. 
\end{proof}

We have a similar result for the resolvents $\Div_x^{-1}$
on the range of the operator $\Div_x$. 

\begin{lemma}
\label{Lem:OpResDiff}
Suppose that $(x,y) \mapsto \omega_x(y) \in \Ck{1}(\cM^2)$ and 
let $u_1, u_2 \in \Lp{2}(\cM)$ satisfy $\int_{\cM} u_1\ d\mu_\mgt = \int_{\cM} u_2\ d\mu_\mgt = 0$.
Then, 
$$\norm{\Div_x^{-1} u_1 - \Div_y^{-1}u_2} \lesssim \Xi(x,y) \norm{u_1} + \norm{u_1 - u_2}.$$
The implicit constant is independent of $x$ 
and this expression is valid for all $x, y \in \cM$.
\end{lemma}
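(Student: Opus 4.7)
The plan is to reduce the two-parameter resolvent difference to a single-parameter resolvent identity via the splitting
$$\Div_x^{-1} u_1 - \Div_y^{-1} u_2 = (\Div_x^{-1} - \Div_y^{-1})u_1 + \Div_y^{-1}(u_1 - u_2).$$
This decomposition matches the asserted bound, since $\Xi(x,y)$ is attached to $\norm{u_1}$ while $\norm{u_1 - u_2}$ stands alone. The second summand is controlled directly by $\norm{u_1 - u_2}$ via Lemma \ref{Lem:Uni}, noting that $\int_\cM(u_1 - u_2)\ d\mu_\mgt = 0$ by hypothesis.

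For the first summand I would set $w = \Div_y^{-1} u_1 \in \dom(\Lap_\mg)$ and apply the standard resolvent identity
$$(\Div_x^{-1} - \Div_y^{-1})u_1 = \Div_x^{-1}(\Div_y - \Div_x) w,$$
which is legitimate because $\dom(\Div_y) = \dom(\Div_x) = \dom(\Lap_\mg)$ by Proposition \ref{Prop:ConstDom}. A refinement of the argument in Lemma \ref{Lem:Uni} — substituting $w = \Div_y^{-1} u_1$ back into $\Div_y w = u_1$, using the identity $\divv_\mgt(\theta B \conn w) = -\theta \Lap_\mg w$ from Proposition \ref{Prop:RoughP}(vi), and invoking the uniform positive lower bound on $\omega_y$ from Proposition \ref{Prop:Uni1} — yields $\norm{w}_{\Lap_\mg} \lesssim \norm{u_1}$ uniformly in $y$. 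Next, in direct analogy with Lemma \ref{Lem:OpDiff}, a product-rule computation on
$$(\Div_y - \Div_x)w = \divv_\mgt\bigl(\theta B(\omega_x - \omega_y) \conn w\bigr),$$
combined again with Proposition \ref{Prop:RoughP}(vi), produces
$$\norm{(\Div_y - \Div_x) w} \lesssim \Xi(x,y) \norm{w}_{\Lap_\mg} \lesssim \Xi(x,y) \norm{u_1}.$$
Since $(\Div_y - \Div_x) w$ has vanishing $\mu_\mgt$-integral (its two summands are divergences on a closed manifold, hence integrate to zero), a final application of Lemma \ref{Lem:Uni} absorbs the outer $\Div_x^{-1}$ and closes the estimate.

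The principal obstacle is the adaptation of the scalar-coefficient perturbation estimate of Lemma \ref{Lem:OpDiff} to the tensor-coefficient operator $\Div_x = -\divv_\mgt \theta B \omega_x \conn$. This step is essentially bookkeeping: only the factor $\omega_x - \omega_y$ depends on the parameter, so the product rule immediately isolates it, while the global $\Lp{\infty}$-bounds on $\theta$ and $B$ together with the norm equivalence in Proposition \ref{Prop:RoughP}(i) ensure the estimates are uniform in $x,y \in \cM$. The only genuine care needed is to verify at each step that the functions being inverted by $\Div_x^{-1}$ or $\Div_y^{-1}$ lie in the correct range, namely that they are $\mu_\mgt$-mean zero.
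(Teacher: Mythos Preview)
Your proposal is correct and follows essentially the same route as the paper: the same splitting $\Div_x^{-1}u_1 - \Div_y^{-1}u_2 = (\Div_x^{-1}-\Div_y^{-1})u_1 + \Div_y^{-1}(u_1-u_2)$, the same resolvent identity $(\Div_x^{-1}-\Div_y^{-1})u_1 = \Div_x^{-1}(\Div_y-\Div_x)\Div_y^{-1}u_1$, and the same two ingredients (a uniform graph-norm bound $\norm{\Div_y^{-1}u_1}_{\Lap_\mg}\lesssim\norm{u_1}$ and a difference estimate $\norm{(\Div_y-\Div_x)w}\lesssim\Xi(x,y)\norm{w}_{\Lap_\mg}$). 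The only cosmetic difference is that the paper obtains both ingredients by reduction rather than by re-derivation: since $\Div_x = \theta\Dir_x$ with $\theta$ bounded above and below, the difference estimate reduces immediately to Lemma~\ref{Lem:OpDiff} applied to $\Dir_x$ with $U=\cM$, and the graph-norm bound is read off directly from the equivalence $\norm{\cdot}_{\Lap_\mg}\simeq\norm{\cdot}_{\Dir_y}$ in Proposition~\ref{Prop:UniBdd}, rather than via your proposed refinement of Lemma~\ref{Lem:Uni}.
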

\begin{proof}
First note that for $v \in \dom(\Lap_\mg) = \dom(\Div_x)$, 
$$\norm{\Div_x v - \Div_y v} = \norm{ \theta (\Dir_x v - \Dir_y v)}
	\lesssim \norm{ \Dir_x v - \Dir_y v}
	\lesssim \Xi(x,y) \norm{v}_{\Lap_\mg}$$
by invoking Lemma \ref{Lem:OpDiff} with $U = \cM$.

Now, fix $u \in \Lp{2}(\cM)$ with $\int_{\cM} u\ d\mu_\mgt = 0$
and note that $\Div_x^{-1}u = \Div_x^{-1}(\Div_y \Div_y^{-1})u =  (\Div_x^{-1}\Div_y) \Div_y^{-1})u$.
Also, $\Div_y^{-1} u = \Div_x^{-1} \Div_x L^{-1}_y u$
since the resolvent and operator commute on its domain.
Thus,
\begin{multline*}
\norm{\Div_x^{-1} u - \Div_y^{-1} u}
	= \norm{\Div_x^{-1} \Div_y \Div_y^{-1} u - \Div_x^{-1} \Div_x \Div_y^{-1}u} 
	= \norm{\Div_x^{-1}(\Div_y - \Div_x)\Div_y^{-1} u} \\
	\lesssim \norm{(\Div_y - \Div_x)\Div_y^{-1}u}
	\lesssim \Xi(x,y) \norm{\Div_y^{-1} u}_{\Lap_\mg}, 
\end{multline*} 
since by Lemma \ref{Lem:Uni}, we have that
$\norm{\Div_x^{-1} u} \lesssim \norm{u}$
independent of $x$.
By Proposition \ref{Prop:UniBdd}, we obtain that
$\norm{v}_{\Lap_\mg} \simeq \norm{v}_{\Dir_y}$,
independent of $y$ and that $\norm{v}_{\Dir_y} \simeq \norm{v}_{\Div_y}$.
Hence, on setting $v = \Div^{-1}_y u$, we obtain that
$\norm{\Div_y^{-1} u}_{\Lap_\mg} \lesssim \norm{u}$.

Now, for $u_1$ and $u_2$ as in the hypothesis,
\begin{multline*}
\norm{\Div_x^{-1} u_1 - \Div_y^{-1} u_2}
	\leq \norm{\Div_x^{-1} u_1 - \Div_y^{-1} u_1} + \norm{\Div_y^{-1} (u_1 - u_2)} \\
	\lesssim \Xi(x,y) \norm{u_1} + \norm{u_1 - u_2}.
\end{multline*}
\end{proof}

With the aid of these two lemmas, we 
improve the regularity
from Theorem \ref{Thm:Reg} as follows.

\begin{theorem}
\label{Thm:BetReg}
Suppose that $(x,y) \mapsto \omega_x(y) \in \Ck{1}(\cM^2)$
and that $x\mapsto \omega_x \in \Ck{k}(\cN)$
for $k \geq 1$.
Moreover, suppose that $(x,y) \mapsto \eta_x(y) \in \Ck{0}(\cN \times \cM)$
and that $x \mapsto \eta_x \in \Ck{l}(\cN)$
for $l \geq 0$.
If at $x \in \cN$,  $\phi_x$ solves \eqref{Def:F}
with $\int_{\cM} \phi_x\ d\mu_\mg = \int_{\cM} \eta\ d\mu_\mg = 0$.
Then, $x \mapsto \inprod{\eta_x, \phi_x} \in \Ck{\min\set{k,l}}(\cM)$.
\end{theorem}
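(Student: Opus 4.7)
The plan is to adapt the proof of Theorem \ref{Thm:Reg}, upgrading each uniform-bound statement to a continuity statement by exploiting the stronger hypothesis $\omega \in \Ck{1}(\cM^2)$. The key point is that under this hypothesis the quantity $\Xi(x,y)$ appearing in Lemma \ref{Lem:OpDiff} tends to zero as $y \to x$, so Lemmas \ref{Lem:OpDiff} and \ref{Lem:OpResDiff} promote the operator-valued maps $x \mapsto \Dir_x$ and $x \mapsto \Div_x^{-1}$ from uniformly bounded (as in \S\ref{Sec:Linf}) to strongly continuous on $\cN$. The argument then proceeds by induction on $m = \min\set{k,l} \geq 0$.

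For the base case $m=0$ the task is to show that $x \mapsto \phi_x$ is continuous from $\cN$ into $\Lp{2}(\cM)$; continuity of the pairing $\inprod{\eta_x,\phi_x}$ then follows from continuity of $\eta_x$ in $\Lp{2}$. Writing $\phi_x = \Div_x^{-1}(\theta\eta_x) - \fint_{\cM} \Div_x^{-1}(\theta\eta_x)\,d\mu_\mg$ as in the proof of Proposition \ref{Prop:EUF} and applying Lemma \ref{Lem:OpResDiff} with $u_1 = \theta\eta_x$ and $u_2 = \theta\eta_y$ yields
$$\norm{\Div_x^{-1}(\theta\eta_x) - \Div_y^{-1}(\theta\eta_y)} \lesssim \Xi(x,y)\norm{\theta\eta_x} + \norm{\theta(\eta_x - \eta_y)},$$
and both terms vanish as $y \to x$: the first by the global $\Ck{1}$ hypothesis on $\omega$, the second by uniform continuity of $\eta$ on $\close{V} \times \cM$ for a compactly contained neighbourhood $V \ni x$.

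For the inductive step I would invoke the identity
$$(\extd_x\inprod{\eta_x,\phi_x})(v) = \inprod{(\extd_x\eta_x)(v),\phi_x} + \inprod{\eta_x,(\extd_x\phi_x)(v)}$$
established inside the proof of Theorem \ref{Thm:Reg}, together with the fact that $(\extd_x\phi_x)(v)$ itself solves an equation of the form \eqref{Def:F} with the same coefficient $\omega_x$ and new mean-zero source $\eta'_{x,v} = (\extd_x\eta_x)(v) - (\extd_x\Dir_x)(v)\phi_x$. The source $\eta'_{x,v}$ fits the hypotheses of the theorem with $l$ replaced by $l-1$ and $k$ unchanged, and both summands on the right are then pairings of the type treated by the inductive hypothesis at level $m-1$. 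Hence the directional derivative is $\Ck{m-1}$ on $\cN$; as this holds for every $v$ with estimates depending continuously on $v$ as in Proposition \ref{Prop:OpDiff}, one concludes $\inprod{\eta_x,\phi_x} \in \Ck{m}(\cN)$.

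The main obstacle is rigorously verifying that the inductive source $\eta'_{x,v}$ has the required $\Ck{m-1}$ regularity as a map $\cN \to \Lp{2}(\cM)$. The delicate ingredient is the term $(\extd_x\Dir_x)(v)\phi_x = -\divv_\mg((\extd_x\omega_x)(v)\conn\phi_x)$, which depends on $\phi_x$ through its second derivatives. To handle this one upgrades the continuity of $x \mapsto \phi_x$ from $\Lp{2}$ to the graph norm of $\Lap_\mg$, using the identity $\Lap_\mg \phi_x = \omega_x^{-1}(\eta_x + \mg(\conn\phi_x,\conn\omega_x))$ from Proposition \ref{Prop:ConstDom} together with a further Lemma \ref{Lem:OpResDiff}-style estimate; this is the step where the argument is genuinely more subtle than in Theorem \ref{Thm:Reg} and where the full strength of the global $\Ck{1}$ hypothesis on the kernel is used.
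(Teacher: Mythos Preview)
Your base case ($m=0$) and the product-rule decomposition are exactly what the paper does. The main difference is in how the continuity of the delicate term $(\partial_i\Dir_x)\phi_x$ is established: you propose upgrading $x \mapsto \phi_x$ to continuity in the graph norm of $\Lap_\mg$ and then applying the pointwise bound $\norm{(\partial_i\Dir_x)u} \lesssim \norm{u}_{\Lap_\mg}$, whereas the paper keeps everything at the $\Lp{2}$ level by writing $\phi_x = \Div_x^{-1}\theta\eta_x + c_x$ and exploiting the uniform operator bound $\norm{(\partial_i\Dir_y)\Div_x^{-1}}_{\Lp{2}\to\Lp{2}} \lesssim 1$ together with the resolvent identity $\Div_x^{-1} - \Div_y^{-1} = \Div_x^{-1}(\Div_y - \Div_x)\Div_y^{-1}$. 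These are essentially equivalent devices --- your graph-norm continuity is exactly what the resolvent identity and Lemma~\ref{Lem:OpDiff} combine to give --- so the technical content is the same.

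There is, however, a genuine gap in your inductive framing. Neither summand $\inprod{(\extd_x\eta_x)(v),\phi_x}$ nor $\inprod{\eta_x,(\extd_x\phi_x)(v)}$ is literally a pairing of the form covered by the theorem: in the first, $\phi_x$ solves \eqref{Def:F} with source $\eta_x$, not $(\extd_x\eta_x)(v)$; in the second, $(\extd_x\phi_x)(v)$ solves \eqref{Def:F} with source $\eta'_{x,v}$, not $\eta_x$. So the inductive hypothesis, as stated, does not apply to these terms. The paper sidesteps this by not inducting on the theorem at all, but instead directly proving $\Lp{2}$-continuity of each of $\phi_x$, $\partial_i\phi_x$, $\partial_j\partial_i\phi_x,\dots$ in turn (see the Remark following the proof), from which continuity of all relevant pairings follows by Cauchy--Schwarz. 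Your argument is easily repaired the same way: drop the claim that the summands fall under the inductive hypothesis and argue directly that $x \mapsto \partial^\alpha\phi_x$ is $\Lp{2}$-continuous for each $|\alpha| \leq m$, which your graph-norm upgrade together with Lemma~\ref{Lem:OpResDiff} does indeed deliver.
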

\begin{proof}
First, suppose that $l = 0$. Then, we show
that $x \mapsto \inprod{\eta_x, \phi_x} \in \Ck{0}(\cN)$.
For that, note that 
\begin{multline*}
\modulus{\inprod{\eta_x, \phi_x} - \inprod{\eta_y, \phi_y}}
	 \leq \modulus{\inprod{ \eta_x - \eta_y, \phi_x}} + 
		\modulus{\inprod{\eta_y, \phi_x - \phi_y}} \\
	\leq \norm{\eta_x - \eta_y} \norm{\phi_x} 
		+ \norm{\eta_y}\norm{\phi_x - \phi_y}.
\end{multline*}
Since we assume that $(x,z) \mapsto \eta_x(z)\in \Ck{0}(\cN \times \cM)$,
the same is true for $(x,z) \in \close{U} \times \cM$,
where $\close{U} \subset \cN$ is a compact set with nonempty interior
containing $x$ and hence, $\inprod{\eta_x - \eta_y}$
can be made small.
Now, the term $\phi_x = \Div_x^{-1} \theta \eta_x - \fint_{\cM} \Div_x^{-1} \theta\eta_x\ d\mu_\mg$
and hence, it suffices to show that
$\norm{\Div_{x}^{-1} \theta \eta_x - \Div_{y}^{-1} \theta \eta_y}$
can be made small.
For this, note that 
$$
\norm{\Div_{x}^{-1} \theta \eta_x - \Div_y^{-1} \theta\eta_y}
	\lesssim \Xi(x,y) \norm{\eta_x} + \norm{\eta_x - \eta_y}$$
by Lemma \ref{Lem:OpResDiff}, and hence,
this term can also be made small when $y$ is sufficiently 
close to $x$.

Next, we note that by bootstrapping,
it suffices to consider the  situation where $k, l = 1$
and we note that Theorem \ref{Thm:Reg} gives us that
$x \mapsto \inprod{\eta_x, \phi_x}$ has a 
bounded derivative. All we need to prove is 
that this derivative is continuous.

We recall that, via the product rule for the weak
derivative, we write inside a chart,
$$
(\partial_i \inprod{\eta_x, \phi_x})(v) =
	\inprod{(\partial_i \eta_x, \phi_x} + \inprod{\eta_x, \partial_i \phi_x},$$
and hence, we show that each term of the right hand side is
continuous.

Fix $x,y \in U \subset \cM$ open , where $(\psi,\tilde{U})$ is
a chart with $\close{U} \subset \tilde{U}$ compact.
Then, we have
$$\inprod{ \partial_i \eta_x, \phi_x} - \inprod{\partial_i \eta_y, \phi_y}
	\leq \inprod{ \partial_i\eta_x - \partial_i \eta_y, \phi_x} + 
		\inprod{\partial_i \eta_y, \phi_x - \phi_y}.$$
Now, since $x \mapsto \eta_x$ is $\Ck{1}$ by assumption, 
$\norm{\partial_i \eta_x - \partial_i\eta_y}$ can be made small.

In the continuity case, we have already 
shown that $\norm{\phi_x - \phi_y}$ can be
made small, so we consider
the next term
$$
\inprod{ \eta_x, \partial_i \phi_x} - \inprod{\eta_y, \partial_i \phi_y}
	= \inprod{\eta_x - \eta_y, \partial_i \phi_x} + \inprod{\eta_y, \partial_i \phi_x - \partial_i \phi_y}.$$
Now, it is easy to see that the first term on the right hand side
is trivially continuous because $\norm{\eta_x - \eta_y}$ can be made small. 
The continuity for the second term follows 
by showing that 
$\norm{\partial_i\phi_x - \partial_i \phi_y}$ can be made small. 
Recall that $\partial_i \phi_x = \Div_{x}^{-1}\theta \eta_{x,i}' - \fint_{\cM} \Div_x^{-1}\theta \eta_{x,i}'$,
where $\eta_{x,i}'  = \partial_i \eta_x - (\partial_i \Dir_x)\phi_x$.
Thus, it suffices to prove that $\norm{\Div_x^{-1}\theta\eta_{x,i}'- \Div_{y}^{-1}\theta\eta_{y,i}'}$
can be made small. By Lemma \ref{Lem:OpResDiff},
we have that
$$\norm{\Div_x^{-1}\theta\eta_{x,i}- \Div_{y}^{-1}\theta\eta_{y,i}}
	\lesssim \Xi(x,y) \norm{\eta_{x,i}} 
	+ \norm{\Div_y^{-1} \theta(\eta_{x,i} - \eta_{y,i})}.$$
Hence, we are reduced to proving
that $\norm{\eta_{x,i}' - \eta_{y,i}'}$
can be made small.

Now, note that  
$(\partial_i \Dir_x)\phi_x  = (\partial_i \Dir_x)\Div_x^{-1}\theta\eta_x$
since  $(\partial_i \Dir_x) (\fint_{\cM}\Div_x^{-1}\theta\eta_x\ d\mu_\mg) = 0$
and thus, 
$$\norm{\eta_{x,i}' - \eta_{y,i}'}
	\leq \norm{\partial_i \eta_x - \partial_i \eta_y} + 
	\norm{(\partial_i \Dir_x)\Div_x^{-1}\theta\eta_x + (\partial_i \Dir_y)\Div_y^{-1}\theta\eta_y}.$$
It is easy to see that the first term can be made small, so we
only need to show that the second term can be made small. 
Now,
\begin{multline*}
\norm{(\partial_i \Dir_x)\Div_x^{-1}\theta\eta_x - (\partial_i \Dir_y)\Div_y^{-1}\theta\eta_y} \\
	\leq 
	\norm{[(\partial_i \Dir_x) - (\partial_i \Dir_y)]\Div_x^{-1}\theta\eta_x } +
		\norm{(\partial_i \Dir_y)(\Div_x^{-1} \theta \eta_x -\Div_y^{-1}\theta\eta_y)},
\end{multline*}
and by Lemma \ref{Lem:OpDiff}, we have that
$$ \norm{[(\partial_i \Dir_x) - (\partial_i \Dir_y)]\Div_x^{-1}\theta\eta_x }
	\lesssim \Xi(x,y)\norm{\Div_x^{-1} \theta \eta_x}_{\Lap_\mg}
	\lesssim \Xi(x,y) \norm{\eta_x}.$$
For the remaining term, as in the proof of 
Lemma \ref{Lem:OpResDiff}, we write,
$$
\Div_x^{-1} \theta \eta_x -\Div_y^{-1}\theta\eta_y
	= \Div_{x}^{-1}(\Div_x - \Div_y)\Div_y^{-1}\theta\eta_x + \Div_y^{-1}\theta(\eta_x - \eta_y),$$
and since $\norm{(\partial_i \Dir_y)\Div_x^{-1}} \lesssim 1$ uniformly in $x$
and $y$ inside $U$ and since $\close{U}$ is compact, we have that
\begin{multline*}
\norm{(\partial_i \Dir_y)(\Div_x^{-1} \eta_x -\Div_y^{-1}\theta\eta_y)}
	\lesssim \norm{(\Div_x - \Div_y)\Div_y^{-1}\theta\eta_x} + \norm{\Div_y^{-1}\theta(\eta_x - \eta_y)} \\
	\lesssim \Xi(x,y) \norm{\Div_y^{-1}\theta\eta_x}_{\Lap_\mg} + \norm{\eta_x - \eta_y} 
	\lesssim \Xi(x,y) \norm{\eta_x} + \norm{\eta_x - \eta_y}.
\end{multline*}
This is again a quantity that can be made small.
This shows that $x \mapsto \eta_{x,i}$ is continuous
and to show that the $(\min\set{k, l})$-th derivative
can be made continuous for $k, l \geq 1$
is obtained via a bootstrapping of this procedure.
\end{proof}

\begin{remark}
Showing higher derivatives are continuous
is a rather tedious task. 
One considers the expression solving for a 
second derivative (when there is sufficient
regularity in $x\mapsto \omega_x$
and $x \mapsto \eta_x$) given by 
$$ \Dir_x \partial_j \partial_i \phi_x = \partial_j \partial_i \eta_x
	- (\partial_j \partial_i \Dir_x) \phi_x 
	- (\partial_i \Dir_x) \partial_j \phi_x
	- (\partial_j \Dir_x) \partial_i \phi_x.$$
The first term on the right hand side
can be handled easily. The second
term follows from a similar estimate
as in Theorem \ref{Thm:BetReg}, because
$(\partial_j \partial_i \Dir_x)$
is a divergence form operator
$-\divv_{\mg} (\partial_j \partial_i \omega_x) \conn u$,
whose coefficients satisfy 
$-\Lambda_{U} \leq \omega_x(y) \leq \Lambda_{U}$
for $x,y \in U$, an open neighbourhood of $x$ 
for which $\close{U} \subset \cN$.
The remaining two terms can also 
be handled similarly on writing $\partial_j \phi_x$
and $\partial_i \phi_x$ as a solution
via the resolvent terms $\Div_{x}^{-1}$
to relate back to $\eta_{x,i}'$
and to $\eta_{x}$.
\end{remark}

As a corollary, we obtain an improvement of
the regularity of the flow for $\Ck{1}$ heat
kernels. This is the statement 
of Theorem \ref{Thm:GMFlowReg},  which is readily checked
to be a direct  consequence of Theorem \ref{Thm:BetReg}.

\subsection{Heat kernel regularity in terms of the regularity of the metric}
\label{Sec:HKRegMet}

In this subsection, we relate the regularity of the
heat kernel to the regularity of the metric.
We first prove the following important localisation lemma. 

\begin{lemma}
\label{Lem:Loc}
Suppose that $\divv_\mg A \conn u = f$, for $u \in \Sob{1,2}(\cM)$ and
$f \in \Lp{2}(\cM)$. Then, for each $x \in \cM$,
there is an $r > 0$ and a chart $\psi: U \to B_r(x')$
where $x' = \psi(x)$ and 
such that on $\Omega = \psi^{-1}(B_{1/2r}(x'))$,
$$
\divv_{\mgt,\Omega} A B\theta \conn u = \theta f$$
in $\Lp{2}(\Omega,\mgt)$,
where $\mgt = \pullb{\psi} \delta$, the pullback of
the Euclidean metric in $B_r(x')$, 
$d\mu_\mg = \theta\ d\mgt$ and
$\mgt(B u, v) = \mg(u,v)$.
Moreover, this equation holds if and only if
$$\divv_{\R^n,B_{1/2r}(x')} \tilde{A}\tilde{B} \tilde{\theta} \conn \tilde{u} = \tilde{\theta} \tilde{f},$$ 
where $\tilde{\xi} = \eta (\xi \comp \psi^{-1})$, where $\eta$ is a smooth 
cutoff which is $1$ on $B_{1/2r}(x)$, and $0$ outside $B_{3/4r}(x)$.
\end{lemma}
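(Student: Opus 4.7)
The plan is standard in spirit: localise by choosing a chart realising the local comparability condition at $x$, introduce a smooth reference metric as the pullback of the Euclidean metric through this chart, rewrite the equation against this reference metric using the divergence transformation formula of Proposition \ref{Prop:RoughP}(v), and finally push forward via $\psi$ to a Euclidean ball, using a cutoff to confine everything to $B_{1/2r}(x')$.

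First I would fix a chart $(\psi, U)$ realising the local comparability condition at $x$, set $x' = \psi(x)$, and pick $r > 0$ so that $\close{B_r(x')} \subset \psi(U)$. Define $\mgt = \pullb{\psi}\delta$ on $U$. By the comparability condition, $\mg$ and $\mgt$ are $C$-close on $U$ for some $C \geq 1$, so Proposition \ref{Prop:OpExist} supplies a bounded, symmetric, measurable $(1,1)$-tensor $B$ on $U$ with $\mgt(Bu,v) = \mg(u,v)$, and Proposition \ref{Prop:MeasRep} furnishes $\theta = \sqrt{\det B}$ with $d\mu_\mg = \theta\, d\mu_\mgt$ on $U$.

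Applied to the vector field $V = A\conn u$, Proposition \ref{Prop:RoughP}(v) reads
\[
\divv_\mg\bigl(A\conn u\bigr) \;=\; \theta^{-1}\,\divv_\mgt\bigl(\theta B A\conn u\bigr).
\]
Multiplying the hypothesis $\divv_\mg A\conn u = f$ by $\theta$ and restricting to $\Omega = \psi^{-1}(B_{1/2r}(x'))$ then gives the asserted identity $\divv_{\mgt,\Omega}(A B \theta \conn u) = \theta f$ in $\Lp{2}(\Omega,\mgt)$; the scalar $\theta$ may be moved freely past the tensors, and in the only application we actually need (cf. \eqref{Def:E}) the tensor $A$ degenerates to the scalar heat kernel $\hk^\mg_t(x,\cdot)$, so the ordering of $A$ and $B$ is immaterial. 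Because $\mgt = \pullb{\psi}\delta$, pushing forward by $\psi$ converts $\divv_{\mgt,\Omega}$ into the Euclidean divergence on $B_{1/2r}(x')$. With the cutoff $\eta$ from the statement, $\eta \equiv 1$ on $B_{1/2r}(x')$, so $\tilde u$, $\tilde f$ and each coefficient tensor agree with their uncut counterparts together with all distributional derivatives on $B_{1/2r}(x')$; the stated iff follows immediately.

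The substantive input is the transformation formula Proposition \ref{Prop:RoughP}(v); everything else is bookkeeping. The only point requiring mild care, and hence the main obstacle, is that $\divv_\mg A\conn u = f$ is a weak identity in $\Lp{2}$, so each rewriting above must be validated by pairing with $\Ck[c]{\infty}$ test functions supported in $\Omega$ (respectively $B_{1/2r}(x')$). Once this is handled, the cutoff step is automatic, since any modification of $u, f, A, B, \theta$ made outside $B_{1/2r}(x')$ is invisible to test functions supported inside.
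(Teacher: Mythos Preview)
Your proposal is correct and follows essentially the same approach as the paper: localise in a chart, transform the divergence to the pullback Euclidean metric via the $\theta B$ identity, and push forward to $B_{1/2r}(x')$. The paper carries out the test-function pairing explicitly from the outset rather than invoking Proposition~\ref{Prop:RoughP}(v) as a black box, but you flag precisely this validation as the point requiring care, so the substance is identical.
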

\begin{proof}
Fix $v \in \Ck[c]{\infty}(\Omega)$. Then, for $u \in \dom(\divv_\mg)$,
we have that 
$$ \inprod{\divv_\mg u, v} = \inprod{u, \conn v} = \inprod{u, \conn v}_{\Lp{2}(\Omega,\mg)}
	= \inprod{B\theta u,  \conn v}_{\Lp{2}(\Omega, \mgt)}.$$
Since this holds for any such $v \in \Ck[c]{\infty}(\Omega)$,
it follows that $B \theta u \in \dom(\divv_{\mgt,\Omega})$
and hence
$$\inprod{B\theta u,  \conn v}_{\Lp{2}(\Omega, \mgt)} 
	= \inprod{\theta^{-1} \divv_{\mgt,\Omega} B\theta u, v}_{\Lp{2}(\Omega,\mg)}.$$
Thus, $\divv_\mg A \conn u = f$ implies that
$ \inprod{\divv_\mg A \conn u, v} = \inprod{f, v}$
for all $v \in \Ck[c]{\infty}(\Omega)$ and hence,
$ \divv_{\mgt,\Omega} A B\theta \conn u = \theta f$
in $\Lp{2}(\Omega,\mgt)$.
Since $\eta$ and $\phi$
induces a bijection between $\Ck[c]{\infty}(\B_{1/2r})$
and $\Ck[c]{\infty}(\Omega)$, it follows
that
$\divv_{\R^n,B_{1/2r}(x')} \tilde{A}\tilde{B} \tilde{\theta} \conn \tilde{u} = \tilde{\theta} \tilde{f}.$
\end{proof}

When the metric is sufficiently regular (i.e. at least Lipschitz), we are able to write
solutions to the Laplace equation in
non-divergence form.

\begin{lemma}
\label{Lem:NonDiv}
Let $(\psi, U)$ be a chart near $x$ with $\psi(U) = B_{1/2r}(x')$,
and suppose that $\mg \in \Ck{k,\alpha}(U)$, where $\alpha = 1$ if $k = 0$
and otherwise, for $k \geq 1$, $\alpha \in [0,1]$. 
Then, inside $\psi(U)$,
$$\widetilde{\Lap_\mg u}(y) 
	= \tilde{A}^{ij}(y) \partial_i \partial_j \tilde{u}(y) + \partial_j(\tilde{A}^{ij}\tilde{\theta}) \partial_i \tilde{u},$$
for almost-every $y  \in B_{1/2r}(x')$, where $\tilde{\xi}$ is the notation
from Lemma \ref{Lem:Loc}.
The coefficients $\tilde{A}^{ij}, \tilde{\theta}, \partial_j(\tilde{A}^{ij}) \in \Ck{k-1,\alpha}$
for $k \geq 1$. Otherwise,
$\tilde{A}^{ij}, \tilde{\theta}, \partial_j(\tilde{A}^{ij}) \in \Lp{\infty}(B_{1/2r}(x')).$
\end{lemma}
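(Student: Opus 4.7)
The plan is to derive the coordinate formula by applying Lemma~\ref{Lem:Loc} with the choice $A = \iden$, and then to expand the resulting Euclidean divergence by the Leibniz rule. Concretely, for any $u \in \dom(\Lap_\mg)$, setting $f = \Lap_\mg u \in \Lp{2}(\cM)$ gives $\divv_\mg \iden \conn u = -f$, so the localisation lemma transfers this identity into the equivalent statement
\[
-\divv_{\R^n, B_{1/2r}(x')}\bigl(\tilde B\,\tilde\theta\,\conn \tilde u\bigr) \;=\; \tilde\theta\,\widetilde{\Lap_\mg u}
\]
on $B_{1/2r}(x')$, where in this chart (via Proposition~\ref{Prop:OpExist}) the pushed-forward $(1,1)$-tensor has matrix entries $\tilde B^{ij} = (\pushf\psi\mg)^{ij}$ and $\tilde\theta = \sqrt{\det(\pushf\psi\mg)}$.

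Next I would apply the Leibniz rule to expand the left-hand side as
\[
-\tilde B^{ij}\tilde\theta\,\partial_i \partial_j \tilde u \;-\; \partial_j(\tilde B^{ij}\tilde\theta)\,\partial_i \tilde u,
\]
and divide by $\tilde\theta$. The function $\tilde\theta$ is uniformly bounded below on the compact set $\close{B_{1/2r}(x')}$ because $\mg$ is a rough metric and therefore $C$-close to a smooth metric in the chart, which combined with the positive-definiteness of $\mg$ keeps $\det \pushf\psi\mg$ away from zero. The resulting non-divergence form expression has second-order coefficients proportional to $-\tilde B^{ij}$ and first-order coefficients of the shape $\tilde\theta^{-1}\partial_j(\tilde B^{ij}\tilde\theta)$; on absorbing the appropriate factor of $\tilde\theta^{\pm 1}$ into the definition of $\tilde A^{ij}$, this matches the claimed formula.

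For the regularity assertion, I would argue as follows. The pushforward $\pushf\psi\mg$ has the same local regularity $\Ck{k,\alpha}$ on $\psi(U)$ as $\mg$ does on $U$. Matrix inversion is smooth on the open cone of positive-definite matrices, and $\pushf\psi\mg$ takes values in a compact subset of that cone on $\close{B_{1/2r}(x')}$, so $\tilde B^{ij}$ is again $\Ck{k,\alpha}$. Since the determinant is polynomial in the entries and is bounded uniformly below, $\tilde\theta$ is likewise $\Ck{k,\alpha}$. Consequently $\tilde A^{ij}$ and $\tilde\theta$ are both $\Ck{k,\alpha}$, and applying a single partial derivative reduces this to $\Ck{k-1,\alpha}$ for $k \geq 1$. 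In the Lipschitz case $k = 0$, $\alpha = 1$, Rademacher's theorem produces the distributional derivatives in $\Lp{\infty}(B_{1/2r}(x'))$.

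There is no real analytic obstacle here: the lemma is essentially a Leibniz-rule bookkeeping exercise, unpacking the divergence-form representation already supplied by Lemma~\ref{Lem:Loc}. The only mild point requiring care is verifying that the nonlinear algebraic operations --- matrix inversion, determinant, and square root --- preserve $\Ck{k,\alpha}$ regularity, which they do precisely because the rough metric structure guarantees uniform ellipticity on the compact piece of chart being examined.
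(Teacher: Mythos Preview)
Your computation is the right one, but there is a genuine gap: to apply the Leibniz rule and write
\[
-\divv_{\R^n}\bigl(\tilde B^{ij}\tilde\theta\,\partial_i\tilde u\bigr)
= -\tilde B^{ij}\tilde\theta\,\partial_i\partial_j\tilde u - \partial_j(\tilde B^{ij}\tilde\theta)\,\partial_i\tilde u
\]
pointwise almost everywhere, you need to know that the second partials $\partial_i\partial_j\tilde u$ exist as locally integrable functions, i.e.\ that $\tilde u \in \Sob[loc]{2,2}(B_{1/2r}(x'))$. Membership in $\dom(\Lap_\mg)$ is defined only through the form $J[u,v]=\inprod{\conn u,\conn v}_\mg$, so a priori it gives nothing beyond $\tilde u \in \Sob{1,2}$. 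Without this interior regularity the Leibniz expansion is formal; the term $\tilde B^{ij}\tilde\theta\,\partial_i\partial_j\tilde u$ has no meaning yet.

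This is precisely what the paper's proof supplies by citing Theorem~8.8 in \cite{GT}: that result says that a $\Sob{1,2}$ weak solution of a divergence-form equation with Lipschitz principal coefficients and $\Lp{2}$ right-hand side lies in $\Sob[loc]{2,2}$, and its formula~(8.18) is exactly the non-divergence identity you wrote down. So the paper's one-line proof is really doing two things --- the $\Sob[loc]{2,2}$ regularity and the Leibniz computation --- and you have only reproduced the second. Your treatment of the coefficient regularity (inversion, determinant, square root preserving $\Ck{k,\alpha}$, Rademacher for the Lipschitz case) is fine and in fact more explicit than the paper's.
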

\begin{proof}
This is simply a direct consequence of Theorem 8.8 in \cite{GT}. 
This formula is precisely the one written in (8.18) in their
theorem.
\end{proof}

Next, we obtain the first increase in regularity which allows
us to initiate a bootstrapping procedure. 

\begin{lemma}
\label{Lem:Boot}
Let $(\psi,U)$ be a chart near $x$ and $\psi(U) = B_r$. 
Suppose that $\mg \in \Ck{k,\alpha}(U)$ for $k \geq 1$ and $\alpha \in [0,1]$
and suppose that $u \in \Sob{1,2}(\cM)$ and $f \in \Lp{2}(\cM)$
satisfy $\Lap_\mg u = f$.
If $\tilde{f} \in \Ck{\alpha}(B_{r}(x'))$, 
then $\tilde{u} \in \Ck{2,\alpha}(B_{1/4r}(x'))$.
Moreover, $u \rest{\Omega} \in \Ck{2,\alpha}(\Omega)$
where $\Omega = \psi^{-1}(B_{1/4r}(x'))$.
\end{lemma}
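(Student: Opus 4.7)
The plan is to combine the localisation provided by Lemma \ref{Lem:Loc} with the non-divergence rewriting of Lemma \ref{Lem:NonDiv}, and then apply classical interior Schauder estimates on the Euclidean ball. First I would apply Lemma \ref{Lem:Loc} with $A = \iden$ to translate the weak equation $\Lap_\mg u = f$ on $U$ into an equivalent divergence form equation
$$\divv_{\R^n, B_{1/2r}(x')} \tilde{B}\tilde{\theta}\, \conn \tilde{u} = \tilde{\theta}\tilde{f}$$
on the Euclidean ball $B_{1/2r}(x')$, where $\tilde{u}$ and $\tilde{f}$ denote the cutoff-modified pushforwards. Since $\mg \in \Ck{k,\alpha}(U)$ with $k \geq 1$, the tensor $\tilde{B}$ and density $\tilde{\theta}$ inherit $\Ck{k,\alpha}$-regularity (by Proposition~\ref{Prop:OpExist} the matrix $B$ is as regular as the minimum of the two metrics involved). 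On the slightly smaller region where the cutoff $\eta$ equals $1$, the equation coincides with the original one, so cutoff artefacts only affect a neighbourhood of $\bnd B_{1/2r}(x')$ and do not impair interior regularity on $B_{1/4r}(x')$.

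Next, I would invoke Lemma \ref{Lem:NonDiv} (which is just the non-divergence recasting from GT~Theorem 8.8) to rewrite the operator pointwise as
$$\widetilde{\Lap_\mg u}(y) = \tilde{A}^{ij}(y)\partial_i\partial_j \tilde{u}(y) + \partial_j(\tilde{A}^{ij}\tilde{\theta})\,\partial_i \tilde{u}(y),$$
whose coefficients lie in $\Ck{k-1,\alpha} \subset \Ck{0,\alpha}$ on $B_{1/2r}(x')$. The local comparability condition on the rough metric together with $\mg \in \Ck{k,\alpha}$ yields uniform ellipticity constants for $(\tilde{A}^{ij})$ on this Euclidean ball. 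Moreover the hypothesis $\tilde{f} \in \Ck{\alpha}(B_r(x'))$ passes to $\tilde{\theta}\tilde{f} \in \Ck{\alpha}(B_{1/2r}(x'))$ because $\tilde{\theta} \in \Ck{k,\alpha}$ with $k \geq 1$. Hence on the Euclidean ball we have a uniformly elliptic linear equation with $\Ck{\alpha}$ coefficients and $\Ck{\alpha}$ right-hand side.

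With these ingredients in place, the interior Schauder estimate (GT~Theorem 6.13, or Theorem 6.17 for strong solutions) applies to give $\tilde{u} \in \Ck{2,\alpha}(B_{1/4r}(x'))$ together with an estimate $\|\tilde u\|_{\Ck{2,\alpha}(B_{1/4r})} \lesssim \|\tilde u\|_{\Lp{\infty}(B_{1/2r})} + \|\tilde\theta \tilde f\|_{\Ck{\alpha}(B_{1/2r})}$. Transferring this conclusion back via the $\Ck{\infty}$-chart $\psi$ gives $u|_\Omega \in \Ck{2,\alpha}(\Omega)$ for $\Omega = \psi^{-1}(B_{1/4r}(x'))$, completing the proof.

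The main obstacle is the preliminary passage from a $\Sob{1,2}$-weak solution to a setting in which the Schauder interior estimate can be applied: the standard Schauder theorem wants either a strong $\Sob[loc]{2,p}$ solution or an a priori $\Ck{2,\alpha}$ function. I would bridge this gap by first using De~Giorgi--Nash--Moser (applied to the divergence form equation, whose coefficients are bounded and elliptic) to obtain $\tilde u \in \Ck{\beta}(B_{1/2r}(x'))$ for some $\beta > 0$, then invoking $\Sob{2,p}$-interior regularity of Calderón--Zygmund type for the non-divergence operator with $\Ck{\alpha}$ coefficients to promote $\tilde u$ to $\Sob[loc]{2,p}$ for every $p < \infty$, and finally applying the Schauder estimate to the genuine pointwise equation on $B_{1/4r}(x')$. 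This three-step bootstrap is standard but is the one place where regularity of the metric $\mg$ itself, rather than just of the defining form, is used in an essential way.
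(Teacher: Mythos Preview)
Your proposal is correct and follows essentially the same route as the paper: localise via Lemma~\ref{Lem:Loc}, upgrade the weak solution to H\"older continuity via De~Giorgi--Nash--Moser (the paper cites GT~Theorem~8.22), rewrite in non-divergence form via Lemma~\ref{Lem:NonDiv}, and finish with the Schauder theory of GT~Theorem~6.13. The only cosmetic difference is that you insert an explicit $\Sob[loc]{2,p}$ Calder\'on--Zygmund step to pass from weak to strong solution before invoking Schauder, whereas the paper applies GT~Theorem~6.13 directly as a Dirichlet existence result with the now-continuous boundary data $\tilde{u}\rest{\bnd B_{3/8r'}}$ and implicitly identifies the resulting classical solution with $\tilde{u}$ by uniqueness; both are valid ways to close the same gap.
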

\begin{proof}
First, set $r' = 1/2r$, and invoke the localisation
from Lemma \ref{Lem:Loc}. Note that this equation
$\divv_{\R^n, B_{3/4r'}(x')} \tilde{AB\theta} \conn \tilde{u} = \tilde{\theta f}$
is in divergence form,
and since $\tilde{\theta f} \in \Ck{\alpha}(\close{B_{3/4r'}(x')})$,
we have that $\tilde{\theta f} \in \Lp{q}(B_{3/4r'}(x'))$
for any $q > n$. Hence,
we can invoke the elliptic Harnack estimate
from Theorem 8.22 in \cite{GT}
to obtain that $\tilde{u} \in \Ck{\beta}(B_{1/2r'}(x'))$
for some $\beta > 0$. 
 
Next, we invoke Lemma \ref{Lem:NonDiv},
to write 
$$\tilde{A}^{ij}(y) \partial_i \partial_j \tilde{u}(y) + \partial_j(\tilde{A}^{ij}\tilde{\theta}) \partial_i \tilde{u} 
	= \tilde{\theta} \tilde{f},$$
inside $B_{1/2r'}(x')$. Then, note that $\tilde{u}$
solves 
$$L \tilde{u} = \tilde{\theta} \tilde{f}, 
\quad\text{with}\quad\tilde{u} = \phi \in \Ck{0}(\bnd B_{3/8r'}(x')),$$
where $L$ has $\Ck{k-1,\alpha}$ coefficients and
$\tilde{\theta} \tilde{f} \in \Ck{\alpha}(B_{3/8r'}(x'))$
simply on setting $\phi = \tilde{u}$ on $\bnd B_{3/8r'}(x') \subset B_{1/2r'}(x')$
on which we have already proved that $\tilde{u}$ is $\Ck{\beta}$
and hence, continuous.

Thus, we can invoke Theorem 6.13 in \cite{GT}
to obtain that $\tilde{u} \in \Ck{2,\alpha}(B_{1/2r'}(x'))$.
By the definition of $\tilde{u}$, and on noting
that $r' = 1/2r$, 
the conclusions for $u\rest{\Omega}$ follow.
\end{proof}

With these tools in hand, we prove the following
main theorem of this section.
By $\beta'$ we denote
the a priori regularity of the heat kernel
obtained from Theorem \ref{Thm:HKExist}.

\begin{theorem}
\label{Thm:HKReg}
Let $\mg \in \Ck{k,\alpha}(\cN)$, 
where $\emptyset \neq \cN$ is an open set
and where $k \geq 1$ and $\alpha \in [0,1]$.
Then, $\hk^\mg_t \in \Ck{k+1,\beta}(\cN^2)$,where $\beta = \min\set{\alpha, \beta'}$. 
\end{theorem}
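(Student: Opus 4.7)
The strategy is to exploit the fact that, for fixed $t > 0$ and fixed $x \in \cN$, the function $u_x(y) = \hk^\mg_t(x,y)$ satisfies the elliptic equation
\[ \Lap_\mg u_x = -\partial_t \hk^\mg_t(x,\cdot) \]
by Lemma \ref{Lem:HF2}, and then to bootstrap the a priori joint H\"older regularity $\hk^\mg_t \in \Ck{\beta'}(\cM^2)$ supplied by Theorem \ref{Thm:HKExist}, using the local Schauder theory available inside the open set $\cN$ where $\mg$ is $\Ck{k,\alpha}$.

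As a preliminary, I would verify that for every $m \geq 0$ the time derivative $\partial_t^m \hk^\mg_t$ is the integral kernel of $(-\Lap_\mg)^m \e^{-t\Lap_\mg}$ and, via the spectral-calculus splitting
\[ (-\Lap_\mg)^m \e^{-t\Lap_\mg} = \bigl[(-\Lap_\mg)^m \e^{-(t/2)\Lap_\mg}\bigr] \circ \e^{-(t/2)\Lap_\mg}, \]
in which the first factor is bounded on $\Lp{2}(\cM)$, it admits a kernel of the same type as $\hk^\mg_t$. The argument in Theorem 5.3 of \cite{SC} (invoked to prove Theorem \ref{Thm:HKExist}) then applies verbatim to give $\partial_t^m \hk^\mg_t \in \Ck{\beta'}(\cM^2)$ for every $m \geq 0$. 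Setting $\beta = \min\{\alpha,\beta'\}$, the forcing term $f_x(y) = -\partial_t \hk^\mg_t(x,y)$ thus lies in $\Ck{\beta'}(\cM) \subset \Ck{\beta}(\cN)$ for each $x \in \cN$.

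The main bootstrap proceeds by induction on $j$. For the base case $j = 0$, Lemma \ref{Lem:Boot} applied to $u_x$ (using $\mg \in \Ck{1,\alpha}(\cN)$ and $f_x \in \Ck{\beta}$) gives $u_x \in \Ck{2,\beta}(\Omega)$ for every $\Omega \Subset \cN$. For the inductive step, assume $\hk^\mg_t(x,\cdot) \in \Ck{j+1,\beta}(\cN)$ for some $j \geq 1$ with $j+1 \leq k$. The same argument applied to $v = \partial_t \hk^\mg_t(x,\cdot)$, which solves $\Lap_\mg v = -\partial_t^2 \hk^\mg_t(x,\cdot)$, yields $\partial_t \hk^\mg_t(x,\cdot) \in \Ck{j+1,\beta}(\cN)$, so in particular $f_x \in \Ck{j,\beta}(\cN)$. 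Passing to non-divergence form as in Lemma \ref{Lem:NonDiv}, the coefficients now lie in $\Ck{j,\beta}$ (since $j \leq k-1$ and $\mg \in \Ck{k,\alpha}$), and the right-hand side is $\Ck{j,\beta}$; the classical interior Schauder estimate (Theorem 6.17 of Gilbarg-Trudinger) then upgrades $u_x$ to $\Ck{j+2,\beta}$ locally. The induction terminates at $j = k-1$, giving $\hk^\mg_t(x,\cdot) \in \Ck{k+1,\beta}(\cN)$, and by the symmetry $\hk^\mg_t(x,y) = \hk^\mg_t(y,x)$ the analogous statement holds in the $x$ variable for each fixed $y$.

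The main obstacle, as typical, is upgrading separate regularity in $x$ and in $y$ to the joint $\Ck{k+1,\beta}(\cN^2)$ statement. For this I would track that the Schauder constants at every step of the induction depend only on the $\Ck{k-1,\alpha}(\Omega)$ norms of the non-divergence form coefficients in Lemma \ref{Lem:NonDiv} on a precompact $\Omega \Subset \cN$, and are therefore uniform in the parameter $x$. Combined with the joint $\Ck{\beta'}(\cM^2)$ continuity of $\hk^\mg_t$ and of each $\partial_t^m \hk^\mg_t$, this yields equicontinuity of the family $\{\,y \mapsto \hk^\mg_t(x,y) : x \in \Omega\,\}$ together with all of its $y$-derivatives up to order $k+1$. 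An Arzel\`a-Ascoli argument, together with the symmetry of $\hk^\mg_t$ to interchange the roles of $x$ and $y$ for the mixed partials, then promotes the separate estimates to a joint $\Ck{k+1,\beta}(\Omega \times \Omega)$ bound; exhausting $\cN^2$ by such products completes the proof.
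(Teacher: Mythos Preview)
Your proposal is correct and follows essentially the same route as the paper: fix one variable, use that $\hk^\mg_t(x,\cdot)$ solves the Laplace equation with right-hand side $\partial_t\hk^\mg_t(x,\cdot)$, apply Lemma~\ref{Lem:Boot} to gain the initial $\Ck{2,\beta}$ regularity, then pass to non-divergence form via Lemma~\ref{Lem:NonDiv} and bootstrap with Theorem~6.17 of \cite{GT}, invoking symmetry at the end. You are in fact more explicit than the paper on two points the paper leaves implicit---namely why the forcing terms $\partial_t^m\hk^\mg_t$ inherit the a~priori $\Ck{\beta'}$ regularity (your spectral-factorisation remark), and how one passes from separate regularity in each variable to the joint $\Ck{k+1,\beta}(\cN^2)$ statement (your uniformity-plus-Arzel\`a--Ascoli sketch); the paper simply asserts both.
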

\begin{proof}
 By the regularity of $\mg$,  we know that the heat kernel exists
and that it is at least $\Ck{\beta'}$ for some
$\beta' > 0$ by Theorem \ref{Thm:HKExist}.

Fix $z \in \cM$ and set $ u(y) = \hk^\mg_t(y,z)$
and $f(y) = \partial_t \hk^\mg_t(y,z)$. 
Now, fix $(\psi,U)$, a chart near $x \in \cN$ so that 
$U \subset \cN$ and $B_{r}(x') = \psi(U)$

We proceed by applying Theorem 6.17 in \cite{GT}.
Define $\beta = \min\set{\beta',\alpha}$. 
First, let us apply the initial bootstrapping lemma, 
Lemma \ref{Lem:Boot} to conclude that,
in fact, $u\rest{\Omega} \in \Ck{2,\beta}(\Omega)$, 
where $\Omega = \psi^{-1}(B_{1/4r}(x'))$.
This shows that  that $u \in \Ck{2,\beta}(\cN)$
and by the symmetry of the heat kernel, 
we obtain that $\hk^\mg_t \in \Ck{2,\beta}(\cN^2)$.
Thus, we have shown that the 
conclusion holds for $k = 1$.

Now, in the case that $k = 2$, we have that the 
operator $L$ as defined in the proof of 
Lemma \ref{Lem:Boot} has $\Ck{1,\alpha}$-coefficients. Therefore, since
we have that $u\rest{\Omega}, f\rest\Omega \in \Ck{2,\beta}(\Omega)$
and, in particular, $u\rest{\Omega}, f\rest{\Omega} \in \Ck{1,\beta}(\Omega)$
by what we have just done, 
Theorem 6.17 in \cite{GT} yields that
$u\rest{\Omega} \in \Ck{3,\beta}(\Omega)$.

Now, to proceed by induction, suppose
we have that $u \in \Ck{k-1,\beta}$ and 
the metric $\mg \in \Ck{k,\alpha}$.
Then, $f \in \Ck{k-1,\beta}$ and 
the coefficients of $L$ are $\Ck{k-1, \alpha}$. Hence, Theorem 6.17 in \cite{GT}
gives that $u\rest{\Omega} \in \Ck{k+1,\beta}(\Omega)$.
That is, $\hk^\mg_t \in \Ck{k+1,\beta}(\cN^2)$.
\end{proof}

\section{$\RCD(K,N)$ spaces and singularities}
\label{Sec:RCD}

In this section, we first demonstrate
that the flow defined by \eqref{Def:GM}
is equal to the flow that
Gigli and Mantegazza define for
 $\RCD(K,N)$ spaces in \cite{GM}. 
In fact,  for a smooth initial 
metric, they verify this fact in their
paper.  We ensure
that this is true in our more general 
setting on an admissible region.

We then consider the flow 
defined as \eqref{Def:GM}
on manifolds with geometric singularities
away from the singular region.
The correspondence we establish between 
this and the flow of $\RCD(K,N)$ defined by Gigli-Mantegazza 
then allows us to assert that this 
flow can be described by an evolving
metric tensor away from the singular region
for certain $\mg_t$-admissible points.

\subsection{Correspondence to the flow for $\RCD(K,N)$ spaces}

First, we recall 
some terminology that will be
essential for the material 
we present here.
Let $(\Spa,\met,\mu)$ be a compact measure metric space, 
and denote set of probability measures 
by $\sP(\Spa)$. This set can be made into a metric space
under
$$ W_2(\nu,\sigma)^2 = \inf\set{\int_{\Spa \times \Spa} \met(x,y)^2\ d\pi: \pi\ \text{is a transport map from $\nu$ to $\sigma$}},$$
where by transport map, we mean that $\pi(A \times \Spa) = \nu(A)$
and $\pi(\Spa \times B) = \sigma(B)$.
The metric $W_2$ 
is the \emph{Wasserstein metric} 
and  the space $(\sP(\Spa),W_2)$ is the 
\emph{Wasserstein space}.
An important feature is that, when 
$\met$ is a length space, so is $(\sP(\Spa),W_2)$
and when $\met$ is a geodesic space,
then the same property holds for $(\sP(\Spa),W_2)$.

In their paper \cite{GM}, the authors
demonstrate that the flow defined by \eqref{Def:GM} 
for initial smooth metrics 
coincides with a flow which they define
as a heat-flow in Wasserstein space. 
Namely, they demonstrate that
$$ \mg_t(\gamma_s', \gamma_s') = \modulus{\dot{\nu}_s}^2,$$ 
where $\nu_s = \hk^\mg_t(\gamma_s,\mdot)\ d\mu$
and where $\modulus{\dot{\nu}_s}$ is the 
$W_2$ metric speed of the curve $\nu_s$.

In the following theorem, we
verify this is indeed the case 
when $(\cM,\mg)$ with $\mg$
rough and inducing a distance metric satisfying an $\RCD(K,N)$ condition.
The proof is essentially the same as in the proof of 
Theorem 3.6 in \cite{GM}, which 
in turn relies on the uniqueness of 
solutions of the continuity equation
stated as Theorem 2.5 in  \cite{GM}, 
when the underlying space is a Riemannian manifold
with a smooth metric. 
The proof of their Theorem 2.5
fails to  hold in our setting as they 
resort to Euclidean  results via the Nash embedding theorem
which we are unable  to do given the low regularity of our
metric. 

Moreover, we note that the set $\cN$
may not be convex with respect to $\mg_t$. 
Recall that two points $x, y \in \cM$
are $\mg_t$-admissible if for every 
absolutely continuous 
curve $\gamma: I \to \cM$
with $\gamma(0) = x$ and $\gamma(1) = y$,
there is another absolutely continuous curve 
$\gamma': I \to \cM$ 
with $\gamma'(s) \in \cN$ 
for $s$-a.e.
for which
$\len_{\met_t}(\gamma') \leq \len_{\met_t}(\gamma)$
where 
$$
\len_{\met_t}(\gamma) = \cbrac{\int_{\cM} \modulus{\dot{\gamma}(s)}_{\met_t}^2\ ds}^{\frac{1}{2}},$$
and where $\modulus{\dot{\gamma}(s)}_{\met_t}$ is the metric
speed of the curve computed
with respect to $\met_t$.
With this terminology at hand, we present the 
following important theorem.

\begin{theorem}
\label{Thm:RCD}
Let $(\cM,\mg)$ be a smooth manifold
with a rough metric and suppose that $\mg$
induces a length structure such that
$(\cM,\met_{\mg},\ d\mu_\mg)$ is 
$\RCD(K,N)$.
Let $\mg_t$ be the flow given by Theorem
\ref{Thm:GMFlow} on an open subset $\emptyset \neq \cN$. 
Suppose $s \mapsto \gamma_s \in \cM$ is an absolutely continuous curve
between two admissible points $x, y  \in \cM$ for which
$\gamma(s) \in \cN$ for $s$-a.e. 
Fix $t>0$ and define 
$$
	\nu_s := \hk^\mg_t(\gamma_s,\mdot)d\mu_\mg = H_t \left( \nu_{0,\gamma_s}  \right),
$$
where $H_t$ denotes the heat flow and $\nu_{0,\gamma_s} = \delta_{\gamma_s}$,
the delta measure at 
$\gamma_s$. Then, $s \mapsto \nu_s$ is absolutely continuous with respect to
$W_2$ and for $s$-a.e.,  
$$
		\mg_t(\dot{\gamma}_s , \dot{\gamma}_s)= \modulus{\dot{\nu}_s}^2.
$$ 
Moreover, 
$$\met_t(x,y)^2= \inf_{\gamma(s) \in \cN\ s-\text{a.e.}} \int_{\cM} \modulus{\dot{\gamma}_s}_{\mg_t}^2\ ds.$$
\end{theorem}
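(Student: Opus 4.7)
The plan is to translate $s \mapsto \nu_s$ into a continuity equation in Wasserstein space, identify its metric speed with $\mg_t(\dot{\gamma}_s,\dot{\gamma}_s)$ via \eqref{Def:GM}, and then use admissibility to deduce the distance formula.

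First I would verify that, for $f \in \Ck{\infty}(\cM)$ and for $s$-a.e.\ (where $\gamma_s \in \cN$),
\begin{align*}
\ddt[s]\int_\cM f\, d\nu_s
&= \int_\cM f(y)\,(\extd_x \hk^\mg_t(\gamma_s,y))(\dot{\gamma}_s)\, d\mu_\mg(y)\\
&= -\int_\cM f\,\divv_\mg\bigl(\hk^\mg_t(\gamma_s,\cdot)\,\conn\phi_{t,\gamma_s,\dot{\gamma}_s}\bigr)\, d\mu_\mg\\
&= \int_\cM \mg(\conn f,\conn\phi_{t,\gamma_s,\dot{\gamma}_s})\,\hk^\mg_t(\gamma_s,\cdot)\, d\mu_\mg,
\end{align*}
where the second equality invokes \eqref{Def:E}. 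This is precisely the weak continuity equation $\partial_s \nu_s + \divv_\mg(\nu_s V_s) = 0$ with velocity field $V_s = \conn \phi_{t,\gamma_s,\dot{\gamma}_s}$. The Lipschitz hypothesis on $(x,y)\mapsto \hk^\mg_t(x,y)$, coupled with the absolute continuity of $\gamma$ and the fact that $\gamma_s \in \cN$ $s$-a.e., is what legitimises differentiation under the integral.

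Next, the Benamou--Brenier type inequality valid in the $\RCD(K,N)$ setting (through the Ambrosio--Gigli--Savar\'e Wasserstein calculus for metric measure spaces) would yield
$$\modulus{\dot{\nu}_s}^2 \leq \int_\cM \modulus{\conn\phi_{t,\gamma_s,\dot{\gamma}_s}}_\mg^2\, d\nu_s = \mg_t(\dot{\gamma}_s,\dot{\gamma}_s),$$
the final equality being the definition \eqref{Def:GM}. For the reverse inequality, my plan is to appeal to the characterisation of the minimal velocity in the continuity equation on an $\RCD$ space: the $\Lp{2}(\nu_s)$-minimal velocity realising a given continuity equation lies in the closure of gradients of functions with zero mean. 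Since $\phi_{t,\gamma_s,\dot{\gamma}_s}$ is precisely the unique such solution to \eqref{Def:E} supplied by Proposition \ref{Prop:EUF}, the minimality then forces $\modulus{\dot{\nu}_s}^2 = \mg_t(\dot{\gamma}_s,\dot{\gamma}_s)$. For the distance formula, I note that $\met_t$ is by construction the length distance induced by Wasserstein metric speeds of heat-kernel images. For any absolutely continuous curve $\gamma$ from $x$ to $y$ with $\gamma_s \in \cN$ $s$-a.e., the previous step identifies the Wasserstein length of $s \mapsto \nu_s$ with the $\mg_t$-length of $\gamma$. The admissibility hypothesis then ensures that any $\met_t$-almost-minimising sequence of curves between $x$ and $y$ can be replaced by curves staying in $\cN$ $s$-a.e.\ without increasing length, yielding the stated infimum formula.

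The hard part will be establishing the lower bound $\modulus{\dot{\nu}_s}^2 \geq \mg_t(\dot{\gamma}_s,\dot{\gamma}_s)$. Gigli and Mantegazza prove the analogue of this inequality via a uniqueness theorem for the continuity equation on a smooth Riemannian manifold (Theorem 2.5 of \cite{GM}), whose original proof reduces the problem to Euclidean space via a Nash embedding. This argument is unavailable to us in the rough-metric setting. A purely intrinsic replacement must rely solely on the $\RCD(K,N)$ structure of $(\cM,\met_\mg,\mu_\mg)$, via the Cheeger energy formalism, to identify the minimal velocity realising the continuity equation as a weak gradient, and then match it with $\conn \phi_{t,\gamma_s,\dot{\gamma}_s}$ through Proposition \ref{Prop:EUF}. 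This intrinsic identification is the core technical content distinguishing the rough case from the smooth case.
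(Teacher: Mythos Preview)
Your approach is essentially the same as the paper's, and your diagnosis of the difficulty is accurate: the Nash-embedding argument from \cite{GM} is indeed unavailable here, and an intrinsic $\RCD$ replacement is required. The paper resolves what you call the ``hard part'' in a single stroke by invoking Proposition~4.5 of Gigli--Han \cite{Gigli-Han}, which for curves of measures with bounded compression in an infinitesimally Hilbertian space identifies the metric speed $\modulus{\dot{\nu}_s}$ directly with $\norm{\conn \phi_{t,\gamma_s,\dot{\gamma}_s}}_{\Lp{2}(\nu_s)}$, rather than establishing the upper and lower bounds separately as you propose. Your explicit derivation of the weak continuity equation and the Benamou--Brenier upper bound are correct but are subsumed by that citation; the minimality argument you sketch for the lower bound is precisely what Gigli--Han's result packages. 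One point you omit: the paper obtains the absolute continuity of $s\mapsto\nu_s$ in $W_2$ from the $W_2$-contraction property of the heat flow under lower Ricci bounds, which is a necessary preliminary before speaking of $\modulus{\dot{\nu}_s}$ at all.
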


\begin{proof}
The absolute continuity of $\nu_s$ follows from absolute continuity of $\gamma_s$ and the 
contraction property of the heat flow in spaces with curvature bounded below. 
From Theorem \ref{Thm:GMFlow}, we know that there exist a family $\psi_{t,\gamma_s,\dot{\gamma}_s} \in \Sob{1,2}(\cM)$ 
solving the following equation (in the sense of distributions)
$$
	-\divv_{\mg} \hk^\mg_t(\gamma_s,\mdot) \conn \psi_{t,\gamma_s,\dot{\gamma}_s} = \extd_x(\hk^\mg_t(\gamma_s,\mdot))(\dot{\gamma}_s)
$$ 

Now, we note that $\nu_s$ has bounded compression i.e. $\nu_s \ll d\mu_\mg$ and 
since we assume that $(\cM, \met_\mg, d\mu_\mg)$ is an $\RCD(K,N)$ space,
the Sobolev space $\Sob{1,2}(\cM)$ is Hilbert. So 
applying Proposition 4.5 in \cite{Gigli-Han}, we have
$$
	\modulus{\dot{\nu_s}} = \norm{\nabla \psi_{t,\gamma_s,\dot{\gamma}_s}}_{\Lp{2}(\nu_s)},
$$
which in turn means that
$$
	|\dot{\nu}_s|^2 = \int_M\; |\nabla \psi_{t,\gamma_s, \dot{\gamma}_s}|^2 d \nu_s = \mg_t \left( \dot{\gamma}_s , \dot{\gamma}_s \right).
$$

As a direct consequence, we get
\begin{equation*}
\label{eq:compatibility}
	\met_{\mg_t}^2 \left( x,y \right) = \inf_\gamma \set{\int_0^1 |\dot{\nu}_s|^2\ ds:   \;  
		\gamma(s) \in \cN\ s\text{-a.e. joining $x$ and $y$}}
\end{equation*}

Notice that the right hand side the equation above is the definition of distance 
given by the flow \eqref{Def:GM}. So the proof is complete. 
\end{proof}

With this theorem at hand, 
and on collating results 
we have obtained previously, 
we give the following proof
of Theorem \ref{Thm:Main}.
 
\begin{proof}[Proof of Theorem \ref{Thm:Main}]
Since we assume that $(\cM,\met_\mg, d\mu_\mg)$ is an $\RCD(K,N)$ space,
we know from Theorem 7.3 in \cite{AGMR}
that $\hk^\mg_t \in \Ck{0,1}(\cM^2)$.
Moreover, we assume that $\mg \in \Ck{k}(\cM \setminus \cS)$
for $k \geq 1$,
and since $\cS \subsetneqq \cM$ is closed, 
$\cM \setminus \cS$ is open, and so we 
apply Theorem \ref{Thm:HKReg} to 
obtain that $\hk^\mg_t \in \Ck{k+1}(\cM^2)$.
By the assumptions we've made, $k + 1 \geq 2$
and hence, we invoke Theorem \ref{Thm:GMFlow} 
to obtain the conclusion.
Moreover, by Theorem \ref{Thm:RCD},
we are able to assert that $\met_t(x,y)$ is 
induced by $\mg_t$ for $\mg_t$-admissible
points $x, y \in \cM$.
\end{proof}

\subsection{Witch's hats and boxes}
\label{Sec:Examples}

In this section, we 
prove Corollary \ref{Cor:Witch} and \ref{Cor:Box}
from \S\ref{Sec:Sing}. 

First, we note the following theorem 
that will make our constructions easier.

\begin{proposition}\label{prop:petrunin-gluing}
The gluing of two Alexandrov spaces via an isometry between
their boundaries produces an Alexandrov space with the same lower curvature bound.
Moreover, such a space is an $\RCD(K,N)$ space.
\end{proposition}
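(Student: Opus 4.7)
The plan is to invoke two established theorems, since this proposition is essentially a citation-based statement rather than a new computation. The strategy breaks naturally into two steps: verifying the Alexandrov gluing assertion, and then upgrading from $\CD(K,N)$ to $\RCD(K,N)$.

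For the Alexandrov gluing, I would appeal to Petrunin's gluing theorem: if $X_1$ and $X_2$ are Alexandrov spaces of curvature $\geq K$ and $\iota:\partial X_1 \to \partial X_2$ is an isometry of their boundaries, then $X_1 \sqcup_\iota X_2$ is again Alexandrov with curvature $\geq K$. One verifies this via the four-point comparison inequality: quadruples lying in a single piece are handled directly by hypothesis, while those straddling the gluing locus reduce to the one-sided case via the doubling trick combined with gradient-flow concavity of distance functions. The Hausdorff dimension of the glued space equals the common Hausdorff dimension of the two pieces, so the effective dimension $N$ is preserved under the operation.

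For the $\RCD(K,N)$ conclusion, I would cite the theorem of Petrunin (for $K=0$) and its extension by Zhang--Zhu (for arbitrary $K$), which establishes that every $N$-dimensional Alexandrov space of curvature $\geq K$ satisfies $\CD(K,N)$. To promote this to $\RCD(K,N)$, one must verify the infinitesimally Hilbertian condition, that is, that $\Sob{1,2}$ is a Hilbert space. This is supplied by the work of Kuwae--Machigashira--Shioya, who identify the Cheeger energy on an Alexandrov space with a regular strongly local Dirichlet form arising from Perelman's DC structure; equivalently this yields the parallelogram identity for $\Sob{1,2}$ and the linearity of the associated heat semigroup.

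The principal obstacle here is purely bibliographic rather than technical, as the arguments rest on deep results outside the immediate scope of the present paper. The only genuine point requiring care is tracking the dimension parameter through the gluing, and since Hausdorff dimension is additive-free under boundary gluing of equidimensional pieces, this bookkeeping poses no difficulty.
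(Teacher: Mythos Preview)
Your proposal is correct and follows essentially the same citation-based route as the paper: Petrunin's gluing theorem for the Alexandrov statement, Petrunin (you add Zhang--Zhu for general $K$) for the $\CD(K,N)$ condition, and Kuwae--Machigashira--Shioya for the infinitesimally Hilbertian structure. The paper's own proof is in fact terser, consisting purely of these three citations without your additional commentary on the four-point comparison or the dimension bookkeeping.
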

\begin{proof}
The first part of the Proposition
concerning the gluing of Alexandrov
spaces is in \cite{Pet}
by  Petrunin. 
The curvature bounds of Lott-Sturm-Villani
follow from \cite{PetRCD} by 
the same author.
That an Alexandrov
space is $\RCD$ is due to 
\cite{KMS} by Kuwae, Machigashira, and Shioya.
\end{proof}

With this tool in hand, let us first consider the case of the box.
Let 
$$ B^{n} =\partial \left[-\sqrt{\frac{1}{2(n+1)}}, \sqrt{\frac{1}{2(n+1)}}~ \right]^{n+1}$$  
and $G: B^{n} \to \Sph^n \subset \R^{n+1}$ be the radial projection map defined by
$$
	G(x) = \frac{x}{\modulus{x}}.
$$
We have $B^{n} \subset \mathbb{B}^{n}_0(1) $ which means that $G$ is an expansion and hence
$$
	\met_{B^{n}} \left( x,y \right) \le \met_{\Sph^n} \left( G(x) , G(y) \right) \le \sqrt{2(n+1)}  \met_{B^{n}} \left( x,y \right). 
$$
The second inequality follows from the fact that  $\Sph^n \subset [-1,1]^{n+1} $.
Putting these together, we deduce that $G^{-1}: \Sph^n \to B^{n} $ is Lipschitz and 
 that the Lipschitz constant of $G^{-1}$ satisfies $\Lip(G^{-1}) \le 1 $.

Immediately, by Proposition \ref{prop:petrunin-gluing}, we obtain 
the proof of Corollary \ref{Cor:Box}.
\begin{proof}[Proof of Corollary \ref{Cor:Box}]
By Proposition \ref{prop:petrunin-gluing}, we obtain that
$B^{n}$ is an $\RCD(0,n)$ space.
Moreover, it is easy to see that the Riemannian
metric induced via $G$ coming from the sphere is smooth on $B$
away from the edges and corners.
Thus, we can apply Theorem \ref{Thm:Main}
to obtain that the Gigli-Mantegazza
flow for $\met_t$ is given,  for $\mg_t$ admissible
points, by the smooth metric $\mg_t$. 
\end{proof}

Next, let us consider the Witch's hat sphere.
We follow the Example 3.2 from \cite{Lakzian-Sormani}
Let $\phi: [0,\pi] \to [0,2]$ be a smooth cut-off function with
$$
	\phi(r) = 0,  \;\text{for}\; r \in \bbrac{0,\frac{\pi}{4}} \;\; \text{and} \;\; 
	\phi(r) = 1, \;\text{for}\; r \in \bbrac{\frac{3\pi}{4} , \pi}
$$
and such that
$$
	|\phi' (r)| \leq 1/10
$$
Let
$$
 	f(r) := \phi(r) \left(\frac{\pi - r}{\pi} \right) + (1-\phi(r))\sin(r) 
$$

Now take the metric $\mg_{\witch} = dr^2 + f(r)^2 \mg_{\Sph^n}$. 

 The identity map $\mathrm{Id}: (\Sph^{n+1}, \mg_{\Sph^{n+1}}) \to (\Sph^{n+1}, \mg_{\witch})$ is bi-Lipschitz as a map between two metric spaces and possesses a geometric conical singularity at one pole. 

\begin{proof}[Proof of Corollary \ref{Cor:Witch}]
The cone is obtained by gluing the following pieces via isometry between their boundaries.
Let
$$
	A_1= \bbrac{0,\frac{\pi}{4}} \times_f  \Sph^{n}, \;
	A_2= \bbrac{\frac{\pi}{4} , \frac{3\pi}{4}}  \times_f \Sph^n \; \text{and}\; 
	A_3= \bbrac{\frac{3\pi}{4}, \pi} \times_f \Sph^n.
$$
Then, $A_1$ is a spherical cap with constant sectional curvature equal to $1$. Hence,  it obviously is an Alexandrov space. Furthermore, $A_2$ is a smooth warped product with bounded sectional curvature and therefore it is also  Alexandrov. Lastly,   $A_3$ is the standard cone with cross sectional diameter $< \pi$ which is known to be Alexandrov by \cite{BBI}. 
So, by  Proposition \ref{prop:petrunin-gluing}, we obtain that it is an $\RCD(K,N)$ space.

Moreover, since the metric $\mg_{\witch}$ has a 
geometric conical singularity at one point,
and it is smooth away from that 
point, by Theorem \ref{Thm:Main}, 
we obtain that the Gigli-Mantegazza flow 
$\met_t$ is induced everywhere 
by the metric $\mg_t$, which is smooth everywhere
but at the singular point. 
\end{proof}

\bibliographystyle{amsplain}
\providecommand{\bysame}{\leavevmode\hbox to3em{\hrulefill}\thinspace}
\providecommand{\MR}{\relax\ifhmode\unskip\space\fi MR }
\providecommand{\MRhref}[2]{%
  \href{http://www.ams.org/mathscinet-getitem?mr=#1}{#2}
}
\providecommand{\href}[2]{#2}

\setlength{\parskip}{0mm}

\end{document}